\newcommand{\RR}{\mathbb{R}}
\definecolor{darkgreen}{rgb}{0,0.5,0}
\definecolor{darkblue}{rgb}{0,0,0.7}
\definecolor{pink}{rgb}{0.8,0.25,0.5125}
\definecolor{purple}{rgb}{0.5,0.3,0.9}
\newcommand{\jl}[1]{{\color{pink}{#1}}}
\newtheorem{theorem}{Theorem}[section]
\newtheorem{proposition}[theorem]{Proposition}
\newtheorem{lemma}[theorem]{Lemma}
\theoremstyle{remark}
\newtheorem{remark}[theorem]{Remark}
\theoremstyle{definition}
\newtheorem{definition}[theorem]{Definition}
\numberwithin{equation}{section}
\numberwithin{equation}{section}
\newcommand{\NN}{\mathbb{N}}
\newcommand{\dstf}{h_{\nu}}
\newcommand{\ve}{\varepsilon}
\newcommand{\vp}{\varphi}
\newcommand{\al}{\alpha}
\newcommand{\ga}{\gamma}
\newcommand{\la}{\lambda}
\DeclareMathOperator{\sech}{sech}
\newcommand{\Ga}{\Gamma}
\newcommand{\De}{\Delta}
\newcommand{\La}{\Lambda}
\newcommand{\Om}{\Omega}
\newcommand{\norm}[1]{\|#1\|}
\newcommand{\rst}[1]{\ensuremath{{\mathbin\upharpoonright}%
\raise-.5ex\hbox{$#1$}}}
\newcommand{\sh}{\mathcal{H}}
\newcounter{gscan}
\newcounter{bwscan}
\newcounter{cscan}
\newcounter{hscan}
\newcounter{fscan}
\newcounter{pscan}
\newcounter{sscan}
\newcounter{iscan}
\newcounter{rscan}
\newcounter{rrscan}
\newcounter{fpscan}
\renewcommand{\thehscan}{H\arabic{hscan}}
\renewcommand{\tilde}{\widetilde}
\title{$\sigma$}
\title[]{Anisotropic Surface Tensions for Phase Transitions in Periodic Media}
\author[R. Choksi]{Rustum Choksi}
\address[R. Choksi]{Department of Mathematics and Statistics, McGill University, Burnside Hall, 805 Sherbrooke Street West, Montreal, QC H3A 0B9}
\email{rustum.choksi@mcgill.ca}
\author[I. Fonseca]{Irene Fonseca}
\address[I. Fonseca]{Department of Mathematical Sciences, Carnegie Mellon University, Wean Hall, 5000 Forbes Avenue, Pittsburgh, PA 15207}
\email{fonseca@andrew.cmu.edu}
\author[J. Lin]{Jessica Lin}
\address[J. Lin]{Department of Mathematics and Statistics, McGill University, Burnside Hall, 805 Sherbrooke Street West, Montreal, QC H3A 0B9}
\email{jessica.lin@mcgill.ca}
\author[R. Venkatraman]{Raghavendra Venkatraman}
\address[R. Venkatraman]{Courant Institute of Mathematical Sciences, 251 Mercer Street, New York, NY-10012}
\email{raghav@cims.nyu.edu}
\keywords{phase transitions, homogenization, anisotropic surface tension, $\Gamma-$convergence}
\subjclass[2010]{49K10, 49K20, 49Q20,74Q05}
\date{\today}
\begin{document}

\maketitle
\textbf{Abstract:} This paper establishes bounds on the homogenized surface tension for a heterogeneous Allen-Cahn energy functional in a periodic medium. The approach is based on relating the homogenized energy to a purely geometric variational problem involving the large scale behaviour of the signed distance function to a hyperplane in periodic media.  Motivated by this, a homogenization result for the signed distance function to a hyperplane in both periodic and almost periodic media is proven. 
%This provides results for the Eikonal equation in the spirit of celebrated works of Chen and Frid in the context of scalar conservation laws \cite{CF}. 
%
%\end{abstract}

%\section{Introduction}
\section{Introduction}

\subsection{The Setting and Statements of the Main Results} 
% Context, CFHP, Literature
We examine anisotropic surface tensions arising from the periodic homogenization of energy functionals in the study of phase transitions. Here, we focus on a subclass of problems presented in \cite{CFHP} where the authors study inhomogeneous media characterized by a heterogeneous double-well potential. 
Precisely, we consider double-well potentials of the form 
\begin{equation}\label{e.well}
\tilde{W}(x,u)=a(x)W(u):=a(x)(1-u^{2})^{2}, 
\end{equation}
with $a: \RR^{N}\rightarrow \RR$ continuous, strictly positive, and $\mathbb{T}^{N}$-periodic, where $\mathbb{T}^N$ denotes the standard $N-$dimensional torus.  
%\jl{and $N\geqslant  2$.} 
%\jlcomment{We want $N\geqslant  2$ in order to have something anisotropic.} \rvcomment{Anisotropy makes sense only in $N \geqslant 2,$ but our "geodesic " formula also makes sense in 1D and in fact, gives an explicit value of $\sigma$ in that setting. This isn't obvious from the cell formula as far as I can tell. Worth a remark?}
In \cite{CFHP}, the authors addressed the $\Ga$-limit of the gradient regularized problem with energy 
 $\mathcal{F}_{\ve}: H^{1}(\Om, \RR^{N})\rightarrow [0, \infty]$, defined by 
\begin{equation}\label{e.fepdef}
\mathcal{F}_{\ve}(u):=\int_{\Omega}\left[\frac{1}{\ve}a\left(\frac{x}{\ve}\right)W\left(u\right)+\frac{\ve}{2}\left|\nabla u\right|^{2}\right]\, dx. 
\end{equation}
Their result pertained to a more general class of potentials $\tilde{W}(x,u)$, but the work presented here relies critically on the product structure (\ref{e.well}). 
The $\Ga$-limit obtained in \cite{CFHP} has the typical form of the weighted perimeter functional 
\begin{equation*}
\mathcal{F}_{0}(u):=\begin{cases}
\int_{\partial^{*}\left\{u=-1\right\}\cap \Omega}\sigma(\nu_{\left\{u=-1\right\}}(x))\, d\mathcal{H}^{N-1}(x) &\text{if $u \in BV(\Om; \left\{-1,1\right\})$,} \\
+\infty &\text{otherwise}, 
\end{cases}
\end{equation*}
where $\nu_{\left\{u=-1\right\}}$ denotes the measure-theoretic external unit normal to the reduced boundary of the level set $\left\{u=-1\right\}$,  and the anisotropic surface tension $\sigma: \mathbb{S}^{N-1}\to [0,\infty)$ is defined by a cell formula governed by a variational problem (see \eqref{e.sigma}). 

In homogenization, the first step consists of finding $\sigma,$ which characterizes the effective ``homogenized'' behavior of the system. A natural follow-up question is to obtain further refined information which clarifies the asymptotic cell formula: this \textit{a posteriori} investigation seeks to understand a number of issues, such as bounds on the homogenized coefficients, regularity, and ellipticity of the effective surface tension $\sigma.$ This paper is concerned with the first of these: bounds on the effective, anisotropic surface tension $\sigma$ obtained from the analysis in \cite{CFHP,CFHP2}, which we achieve via comparison with a novel ``geodesic'' formula. These bounds are written in terms of a metric which takes into account the heterogeneities of the underlying media. With regards to some of the other properties posed above (i.e. regularity), we mention the works of \cite{CGN, willpeter2} which contain very interesting results in these directions.

To state and then motivate our results, we need to first recall the  effective surface energy density $\sigma: \mathbb{S}^{N-1}\rightarrow (0, \infty)$ which was introduced in \cite{CFHP}. To this end, we develop some notation.  

Let $N \geqslant 2$ denote the spatial dimension, and let $\left\{e_{1}, e_{2}, \cdots, e_{N}\right\}$ be the standard orthonormal basis of $\RR^{N}$.
\begin{itemize}
\item \underline{Cubes:} With respect to this basis, let $Q:=(-\tfrac{1}{2}, \tfrac{1}{2})^{N}$ be the unit cube in $\RR^{N}$ centered at the origin, and for each $\nu\in \mathbb{S}^{N-1}$, let $Q_{\nu}$ be a unit cube centered at the origin with two faces orthogonal to $\nu$. Let $\Sigma_\nu$ denote the plane through the origin with normal $\nu,$ and we set $\square_\nu := \Sigma_\nu \cap Q_\nu,$ an $(N-1)-$dimensional unit cube in the plane $\Sigma_\nu.$
\item \underline{Half-Spaces:} For each $\nu\in \mathbb{S}^{N-1}$, we define $\mathcal{H}_{\nu}:=\left\{x\cdot \nu>0\right\}$. This is the ``positive'' open half-space in the direction $\nu$. 
\item \underline{Sequences:} In what follows, when we write $T \to \infty,$ we understand an arbitrary sequence $\{T_m\}_{m \in \mathbb{N}}, $ with $T_m \to \infty$ as $m \to \infty.$ 
\end{itemize}
The following hypotheses \eqref{h.per}-\eqref{h.smootha} are used in the sequel: 
\begin{list}{ (\thehscan)}
{
\usecounter{hscan}
\setlength{\topsep}{1.5ex plus 0.2ex minus 0.2ex}
\setlength{\labelwidth}{1.2cm}
\setlength{\leftmargin}{1.5cm}
\setlength{\labelsep}{0.3cm}
\setlength{\rightmargin}{0.5cm}
\setlength{\parsep}{0.5ex plus 0.2ex minus 0.1ex}
\setlength{\itemsep}{0ex plus 0.2ex}
}
\item \label{h.per} $a : \RR^N \to (0,\infty)$ is $\mathbb{T}^N$-periodic, i.e., $a(x+ke_{i})=a(x)$ for all $x\in \mathbb{R}^{N}$, $k\in \mathbb{Z}$, $i \in \{1,\cdots,N\}.$  
\item \label{h.nondeg} There exist {$\Theta>\theta >0$,} such that for all $x\in \RR^{N}$, {$\theta\leqslant a(x)\leqslant \Theta$. } 
\item \label{h.domain} $\Om\subseteq \RR^{N}$ is a Lipschitz domain. 
\item \label{h.smootha} $a$ is continuous. 
\end{list}
 Let
\begin{equation}\label{e.Cdef}
\mathcal{C}(TQ_{\nu}):=\left\{u\in H^{1}\left(TQ_{\nu}; \RR\right): u=\rho * u_{0,\nu}\, \text{on $\partial (TQ_{\nu})$}\right\},
\end{equation}
with
\begin{equation*}
u_{0, \nu}(y):=\begin{cases}
-1 &\text{if $x\cdot\nu\leqslant 0$},\\
1&\text{if $x\cdot \nu>0$},
\end{cases}
\end{equation*}
and $\rho\in C^{\infty}_{c}(B(0,1)),$ with $0 \leqslant \rho \leqslant 1,$  and $\int_{\RR^{N}} \rho(x)\, dx=1$. Following \cite{CFHP}, we define $\sigma: \mathbb{S}^{N-1}\rightarrow (0, \infty)$ by the cell formula 
\begin{equation}
\label{e.sigma}
\sigma(\nu) := \lim_{T \to \infty} \frac{1}{T^{N-1}} \inf \left\{\int_{TQ_\nu} \left[ a(y)W(u) +\frac{1}{2}|\nabla u|^2\right]  \,dy : u \in \mathcal{C}(TQ_{\nu}) \right\}. 
\end{equation}
We now state the precise result of \cite{CFHP}. 
\begin{theorem}(\cite[Theorem 1.6]{CFHP}, see also \cite{CFHP2}).\label{t.fonseca}
Let $\left\{\ve_{k}\right\}_{k\in \mathbb{N}}$ be a sequence such that $\ve_{k}\rightarrow 0^+$ as $k\rightarrow \infty$. Assume that \eqref{h.per}-\eqref{h.domain} hold, and that the function  $a$ is measurable. 
\begin{enumerate}
\item [(i)] If $\left\{u_{k}\right\}_{k\in \mathbb{N}}\subseteq H^{1}(\Om; \RR)$ satisfies 
\begin{equation*}
\sup_{k\in \mathbb{N}} \mathcal{F}_{\ve_{k}}(u_{k})<+\infty, 
\end{equation*}
then, up to a subsequence (not relabeled), there exists some function $u \in BV(\Om; \left\{-1,1\right\})$ so that $u_{k}\xrightarrow{L^1(\Om)} u $. 
\item [(ii)]As $k\rightarrow \infty$, $\mathcal{F}_{\ve_{k}}\xrightarrow{\Ga-L^{1}}\mathcal{F}_{0}$, where 
\begin{equation}\label{e.f0def}
\mathcal{F}_{0}(u)=\begin{cases}
\int_{\partial^{*}A}\sigma(\nu_{A}(x))\, d\mathcal{H}^{N-1}(x) &\text{if $u \in BV(\Om; \left\{-1,1\right\})$,} \\
+\infty &\text{otherwise},
\end{cases}
\end{equation}
for $\sigma:\mathbb{S}^{N-1}\rightarrow [0, \infty)$ defined by \eqref{e.sigma}, $A:=\left\{u=-1\right\}$, and $\nu_{A}(x)$ is the measure theoretic external unit normal to the reduced boundary $\partial^{*}A$ at $x$. 
\item [(iii)] $\sigma: \mathbb{S}^{N-1}\rightarrow [0, \infty)$ defined by \eqref{e.sigma} is continuous, and its one-homogeneous extension is convex, and hence, locally Lipschitz in $\RR^N$. 
\end{enumerate}
\end{theorem}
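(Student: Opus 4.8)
The plan is to prove this result as a Modica--Mortola-type $\Gamma$-convergence statement adapted to the oscillatory potential $a(\cdot/\ve)W$, in four stages: well-posedness of the cell formula, compactness, the two $\Gamma$-convergence inequalities, and the qualitative properties of $\sigma$. \emph{Well-posedness of} \eqref{e.sigma}: writing $g_\nu(T)$ for the infimum appearing there before division by $T^{N-1}$, one shows that $T\mapsto g_\nu(T)$ is almost subadditive --- given $T_1,T_2$, tile $TQ_\nu$ with $T\approx T_1+T_2$ by translates of $T_1 Q_\nu$ and fill the thin residual region with the profile $\tilde u_{\rho,1,\nu}$, whose energy there is $O(T^{N-2})$ since $\tilde u_{\rho,1,\nu}$ has gradient supported in an $O(1)$-width slab about $\{y\cdot\nu=0\}$ and is locally constant elsewhere; Fekete's lemma then furnishes $\lim_{T\to\infty}g_\nu(T)/T^{N-1}=:\sigma(\nu)\in[0,\infty)$. \emph{Compactness (part (i)):} by \eqref{h.nondeg}, $\mathcal F_{\ve_k}(u_k)$ dominates the homogeneous Modica--Mortola energy with potential $\theta W$; with $\Phi(t):=\int_0^t\sqrt{2\theta W(s)}\,ds$, Young's inequality gives $|\nabla(\Phi\circ u_k)|=\sqrt{2\theta W(u_k)}\,|\nabla u_k|\le \tfrac\theta{\ve_k} W(u_k)+\tfrac{\ve_k}{2}|\nabla u_k|^2$, so $\Phi\circ u_k$ is bounded in $BV(\Om)$ (the uniform bound on $\int_\Om W(u_k)$ also bounds $u_k$ in $L^4$); extracting $\Phi\circ u_k\to v$ in $L^1$ and inverting the increasing homeomorphism $\Phi$ yields $u_k\to u:=\Phi^{-1}(v)$ in $L^1(\Om)$, and since $\int_\Om W(u_k)\to 0$ forces $u\in\{-1,1\}$ a.e.\ while $v=\Phi(u)\in BV$, we obtain $u\in BV(\Om;\{-1,1\})$.

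\emph{The $\Gamma$-liminf inequality.} Fix $u_k\to u$ in $L^1$ with $A:=\{u=-1\}$, localize the energies as Radon measures $\mu_k$ on $\Om$, and pass, along a subsequence, to a weak-$*$ limit $\mu$. By the Besicovitch derivation theorem it suffices to prove
\[
\frac{d\mu}{d(\mathcal{H}^{N-1}\rst{\partial^{*}A})}(x_0)\ \geqs\ \sigma(\nu_A(x_0))\qquad\text{for }\mathcal{H}^{N-1}\text{-a.e. }x_0\in\partial^{*}A.
\]
This is the blow-up step: rescaling $\mu_k$ about $x_0$ at spatial scale $r$, the rescaled limit of $u$ is the pure jump across the hyperplane $\{(y-x_0)\cdot\nu_A(x_0)=0\}$, and a diagonal extraction in which $r\downarrow 0$ and $k\to\infty$ are coupled so that $\ve_k/r\to 0$ identifies, after a change of variables, the rescaled energy with admissible competitors for the cell problem on $TQ_{\nu_A(x_0)}$ as $T\to\infty$ --- the scale separation $\ve_k\ll r$ being precisely what makes the oscillation average out on the scale of the blow-up and produce the value $\sigma(\nu_A(x_0))$ rather than a frozen-coefficient surface tension. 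Integrating the density bound over $\partial^{*}A$ gives $\Gamma\text{-}\liminf_k\mathcal F_{\ve_k}\geqs\mathcal F_0$; with the compactness above this also yields the $BV$-convergence asserted in~(i).

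\emph{The $\Gamma$-limsup inequality.} For $A$ a half-space with inner normal $\nu$, choose $w_T\in\mathcal{C}(TQ_{\nu})$ nearly optimal in \eqref{e.sigma}, extend it periodically in the directions tangent to $\nu$ (legitimate since its boundary datum $\tilde u_{\rho,1,\nu}$ is constant on those faces and equals $\pm1$ near the $\nu$-faces), and put $u_k(x):=w_{T_k}(x/\ve_k)$ with $T_k\to\infty$ and $\ve_k T_k\to 0$; rescaling $y=x/\ve_k$ counts $\sim(\ve_k T_k)^{-(N-1)}$ periods of tangential energy $\sim g_\nu(T_k)$, so $\mathcal F_{\ve_k}(u_k)\to\sigma(\nu)\,\mathcal{H}^{N-1}(\partial A\cap\Om)$. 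This is carried out face by face for polyhedral $A$, the profiles being glued across edges in transition layers of volume, hence energy, $o(1)$; finally, a general set of finite perimeter is approximated in $L^1$ by polyhedra $A_j$ with $P(A_j)\to P(A)$, and Reshetnyak continuity of $A'\mapsto\int_{\partial^{*}A'}\sigma(\nu_{A'})\,d\mathcal{H}^{N-1}$ (valid because $\sigma$ is continuous, see below) together with lower semicontinuity of the $\Gamma$-limsup and a diagonal argument gives $\Gamma\text{-}\limsup_k\mathcal F_{\ve_k}(u)\leqs\mathcal F_0(u)$. With the liminf inequality this proves~(ii).

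\emph{Properties of $\sigma$ (part (iii)) and main obstacle.} Using as competitor the optimal one-dimensional profile of $\min\{\int_{\RR}[\Theta W(\gamma)+\tfrac12|\gamma'|^2]:\gamma(\pm\infty)=\pm1\}$ rigidly translated along $\nu$ and cut off to meet the boundary datum gives an upper bound $\sigma(\nu)\leqs c_\Theta$ independent of $\nu$, while running the homogeneous lower bound with potential $\theta W$ on $TQ_\nu$ --- where every admissible $u$ must transition from $-1$ to $1$ --- gives $\sigma(\nu)\geqs c_\theta>0$; transplanting near-optimal competitors via a rotation carrying $Q_\nu$ to $Q_{\nu'}$, and using that $\tilde u_{\rho,1,\nu}$ depends continuously on $\nu$, shows $\sigma$ is continuous on $\mathbb S^{N-1}$, hence $\tilde\sigma$ is finite and locally bounded on $\RR^N$. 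Convexity of $\tilde\sigma$ is then essentially automatic: $\mathcal F_0$, being a $\Gamma$-limit, is $L^1$-lower semicontinuous, and a surface functional $A\mapsto\int_{\partial^{*}A}\sigma(\nu_A)\,d\mathcal{H}^{N-1}$ with continuous $\sigma$ is $L^1$-lower semicontinuous if and only if its one-homogeneous extension is convex (Ambrosio--Braides); an everywhere-finite convex function on $\RR^N$ is automatically locally Lipschitz, so $\tilde\sigma\in\mathrm{Lip}_{\mathrm{loc}}(\RR^N)$. The genuinely hard parts are the two-scale blow-up in the liminf inequality --- decoupling $\ve_k$ from the blow-up radius so that the rescaled problems converge to the cell formula \eqref{e.sigma} --- and the accompanying subadditivity estimate that makes $\sigma$ well defined in spite of the mollified boundary data; by contrast, once $\Gamma$-convergence is established, convexity of $\tilde\sigma$ costs essentially nothing.
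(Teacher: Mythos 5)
This theorem is imported verbatim from \cite{CFHP} (their Theorem 1.6), and the paper offers no proof of its own beyond the citation; your sketch reconstructs essentially the argument of that reference and of the Modica--Mortola/Ansini--Braides--Chiad\`o Piat tradition: almost-subadditivity plus Fekete for the cell limit, the $\Phi\circ u_k$ truncation for compactness, the Fonseca--M\"uller blow-up with the scale separation $\ve_k\ll r$ for the liminf inequality, polyhedral approximation with Reshetnyak continuity for the limsup, and $L^1$-lower semicontinuity of the $\Gamma$-limit for convexity of $\tilde\sigma$. At the level of detail offered the sketch is sound (the standard technical points it elides --- lattice-compatible translations in the subadditivity argument for irrational $\nu$, and De Giorgi slicing to restore the boundary datum after blow-up --- are handled by the usual machinery), so there is nothing to compare against beyond the citation.
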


The formula \eqref{e.sigma} for $\sigma$ embeds a one-parameter family of variational problems, henceforth called the {\it cell problem}. Our first main result consists of \textit{anisotropic} bounds on $\sigma$ in relation to a novel geodesic formula which is expressed by solutions to an associated Eikonal equation. 
To formulate our estimates, consider the Riemannian metric in $\RR^N$ given by the following: for any $y_0, y_1 \in \RR^N,$ 
\begin{align} \label{e.deflength}
d_{\sqrt{a}}(y_0,y_1) := \inf_{\gamma} \int_0^1 \sqrt{a(\gamma(t))}|\dot{\gamma}(t)|\,dt,
\end{align}
where the infimum is taken among Lipschitz curves $\gamma:[0,1] \to \RR^N$ with $\gamma(j) = y_j, j = 0,1.$ Standard arguments via the Hopf-Rinow theorem entail that $\RR^N,$ with the $d_{\sqrt{a}}$ metric, is a complete metric space. For any $\nu \in \mathbb{S}^{N-1},$ recalling that  \[ \Sigma_\nu := \{x \in \RR^N: x \cdot \nu = 0\},\] we consider the signed distance function with respect to the $d_{\sqrt{a}}$ metric, to the plane $\Sigma_\nu.$ Precisely, 
\begin{align} \label{e.hnudefintro}
h_\nu(y) := \mathrm{sign}(y \cdot \nu) \inf_{z \in \Sigma_\nu}d_{\sqrt{a}}(y,z). 
\end{align}
%When $a \equiv 1,$ this distance function $h_\nu$ is, of course, linear and is explicitly given by $h_\nu(y) = y \cdot \nu.$ Going back to the functionals \eqref{e.fepdef}, but specialized to the isotropic setting of $a \equiv 1,$ it is well-known (see \cite{FT89,Mod,S88}) that the effective isotropic surface tension $\sigma(\nu) \equiv \sigma$ is given by 
%\begin{align*}
%\sigma = 2\int_{-1}^1 \sqrt{W(s)}\,ds,
%\end{align*}
%and this cost corresponds to the one-variable function $y \mapsto \tanh(y \cdot \nu).$ Roughly speaking, our main theorem supplies upper and lower bounds for the \textit{anisotropic} surface tension $\sigma$ in terms of the functions $y \mapsto \tanh(h_\nu(y))$, with $h_\nu$ as in \eqref{e.hnudefintro}. Before we discuss our results and place it in a broader context, we set up some notation that is used in the sequel. 
%The question we answer concerns the large-scale behavior of these distance functions $h_\nu:$ what can we say about the limit $\lim_{T \to \infty} \frac{h_\nu(Tx)}{T}?$ In our second main theorem \ref{t.pmp} below, we answer this question by showing 
%uniformly on compact sets, for some constant $c(\nu) > 0.$ Within the homogenization literature, this question concerning distance functions to planes, is referred to as the planar metric problem. We discuss a number of consequences of this result, its relation to our novel geodesic representation formula, and place it in a larger context of existing homogenization literature in Section \ref{ss.main} below. 
It is well known, and recalled in Lemma \ref{l.hlips} below, that $h_\nu$ is Lipschitz continuous and satisfies, pointwise a.e., the eikonal equation  \[ |\nabla h_\nu| = \sqrt{a}\quad\text{in $\RR^{N}$.}\]
We next present the first main result.
\begin{theorem}\label{t.sigmarep}
Suppose that \eqref{h.per}-\eqref{h.smootha} hold, and let $\sigma: \mathbb{S}^{N-1}\rightarrow [0, \infty)$ be the anisotropic surface energy as in \eqref{e.sigma}. Let $q : \RR \to \RR$ be defined by 
$$
q(z) := \tanh(\sqrt{2}z), \quad z \in \RR.
$$
For $\nu \in \mathbb{S}^{N-1},$ define 
\begin{equation}
\label{e.lambda}
\begin{aligned} 
&\underline{\lambda}(\nu) := \liminf_{T \to \infty} \frac{1}{T^{N-1}} \int_{TQ_\nu}\left[a(y) W(q \circ h_\nu)+\frac{1}{2}|\nabla (q \circ h_\nu)|^2\right]  \,dy,\\
&\overline{\lambda}(\nu) := \limsup_{T \to \infty} \frac{1}{T^{N-1}} \int_{TQ_\nu}\left[a(y) W(q \circ h_\nu)+\frac{1}{2} |\nabla (q \circ h_\nu)|^2\right]  \,dy.
\end{aligned}\end{equation}
There exist $\Lambda_{0}>0$ universal and $\la_{0}: \mathbb{S}^{N-1}\rightarrow [0, \La_{0}]$ such that 
\begin{align} \label{e.sigrep}
\overline{\lambda}(\nu) - \lambda_0(\nu) \leqslant \sigma(\nu) \leqslant \underline{\lambda}(\nu). 
\end{align}
\end{theorem}
\begin{remark}
 {We conjecture that the main result of \cite{CFHP}, namely, Theorem \ref{t.fonseca}, holds with an identical cell formula, when $a$ is merely almost periodic, as opposed to periodic. If this is the case, then our bounds in Theorem \ref{t.sigmarep} also apply to the setting where $a$ is almost periodic. }
 \end{remark}

The computation of these bounds depends solely  on the large-scale behavior of the distance functions $h_\nu$,  for which one can readily invoke efficient numerical algorithms, for example fast marching and sweeping methods \cite{sethian}. 
As we explain in the next subsection, the structure of the new geodesic formula, which is the basis for our bounds, is an intuitive generalization of the Modica-Mortola framework for the  homogeneous case $ a \equiv 1$.

Motivated by these bounds, we next turn to rigorous analytical results concerning the large-scale behavior of the distance functions $h_\nu$. Precisely, we seek to characterize the limit 
\begin{align*}
\lim_{T \to \infty} \frac{h_\nu(Tx)}{T}, \quad \quad x \in \RR^N,
\end{align*} 
in a suitable topology of functions. Our second main result resolves this question, by showing that these rescaled functions converge locally uniformly to the signed distance function in an effectively homogeneous medium. 

\begin{theorem}\label{t.planarint}
Suppose \eqref{h.per}-\eqref{h.smootha} hold. For each $\nu\in \mathbb{S}^{N-1},$ there exists a unique $c(\nu)\in [\sqrt{\theta}, \sqrt{\Theta}]$ such that for all $K\subseteq \RR^{N}$ compact, we have 
\begin{align*}
\lim_{T \to \infty} \sup_{x\in K}\left|\frac{1}{T}h_\nu(T x)-c(\nu)(x\cdot \nu)\right|=0,
\end{align*}
and $c(\nu)=c(-\nu)$.
%In fact, 
%\begin{equation} \label{e.uniform}
%\lim_{T \to \infty} \sup_{y\in H_{\nu}}\left|\frac{1}{T(y\cdot\nu)}h_\nu(Ty)-c(\nu)\right|=0,
%\end{equation}

%The planar metric problem homogenizes. To be precise, t
\end{theorem}

From the perspective of geometry, Theorem \ref{t.planarint} characterizes the large-scale limiting behaviour of the signed distance function to a hyperplane in a periodic Riemannian metric that is conformal ot the Euclidean one. We refer to the works of Bangert \cite{Bangert} and Burago \cite{burago}, who studied the behaviour of ``point-to-point'' distances in periodic metrics, in greater abstraction than what we study here. Under the same rescaling, they identify the effective ``stable norm'' $\norm{x-y}_{*}$ which characterizes the effective distance between $x,y\in \RR^{N}$. In Remark \ref{r.fg}, we discuss some open directions relating this effectively homogeneous distance function $c(\nu)(\cdot)\cdot\nu$ with the stable norm $\norm{\cdot}_{*}$ identified in these works. 

Theorem \ref{t.planarint} also implies a homogenization result for the Eikonal equation in half-spaces. Indeed, it is well known (see for example \cite{ManMen}) that for each fixed $\nu\in \mathbb{S}^{N-1}$, for $\left\{T_{m}\right\}_{m\geq 0}$ with $T_{m}\to \infty$ as $m\to \infty$, the functions $k_{m}(x):=T_{m}^{-1}h_{\nu}(T_{m}x)$ and $\ell(x):=c(\nu)(x\cdot \nu)$ are the unique viscosity solutions of 
\begin{alignat}{2}\label{e.planmet2}
    & \begin{aligned} & \begin{cases}
|\nabla k_{m}|=\sqrt{a(T_{m}x)}&\text{in $\mathcal{H}_{\nu}$},\\
k_{m}=0&\text{on $\Sigma_{\nu}$},
  \end{cases}\\
%  \MoveEqLeft[-1]\text{pentru legea I, si}
  \end{aligned}
    & \hskip 3em \text{and} \hskip 3em&
     \begin{aligned}
  & \begin{cases}
|\nabla \ell|=c(\nu)&\text{in $\mathcal{H}_{\nu}$},\\
\ell=0&\text{on $\Sigma_{\nu}$}, 
  \end{cases} 
  \end{aligned}\end{alignat}
  where we recall that $\mathcal{H}_{\nu}=\left\{x\cdot \nu>0\right\}$. We note that although $h_{\nu}$ is defined on all of $\RR^{N}$, the well-posedness of $h_{\nu}$ as the unique viscosity solution of \eqref{e.planmet2} only holds true in $\mathcal{H}_{\nu}$. Theorem \ref{t.planarint} shows that viscosity solutions of the PDEs on the left side of \eqref{e.planmet2} converge locally uniformly to the viscosity solution of the PDE on the right. A stochastic (and possibly viscous) version of these equations (termed the ``planar metric problem'') in $\RR^{N}$-stationary and finite range of dependent media (essentially independent and identically distributed media) was introduced by Armstrong and Cardaliaguet \cite{scottpierre} and studied by others \cite{FS, feldman} in the context of stochastic homogenization of geometric flows. In these works, they prove a similar result holds true almost surely. In particular, we highlight that the work of Feldman and Souganidis \cite[Proposition 3.6]{FS} implies the statement of Theorem \ref{t.planarint} in the case when $K\subseteq \mathcal{H}_{\nu}$. We will discuss related work and alternative approaches to what we have taken here in Section \ref{ss.discuss}.
 
 Finally, we add that our argument for proving Theorem \ref{t.planarint} in fact yields a more general homogenization result which is not covered by \cite{FS}, where they assumed $\mathbb{Z}^{N}$-stationarity and finite range of dependence. This assumption is not valid, for instance, in almost periodic media, which is characterized by large-scale correlations. Our proof, on the other hand, is sufficiently robust to cover the almost periodic setting:
 \begin{theorem}\label{t.ap}
 Suppose \eqref{h.nondeg}-\eqref{h.smootha} hold, and $a:\RR^{N}\rightarrow \RR$ is a Bohr almost periodic function (see Definition \ref{d.ap1} or Definition \ref{d.ap2}). For $h_{\nu}$ defined by \eqref{e.hnudefintro}, there exists a unique $c(\nu)\in [\sqrt{\theta}, \sqrt{\Theta}]$ such that for all $K\subseteq \RR^{N}$ compact, we have
    \begin{align*}
\lim_{T\to \infty} \sup_{x\in K}\left|\frac{1}{T}h_\nu(Tx)-c(\nu)(x\cdot \nu)\right|=0,
\end{align*}
and $c(\nu)=c(-\nu)$. 

 \end{theorem}

While we have stated Theorem \ref{t.planarint} and Theorem \ref{t.ap} in terms of the signed distance function, we note that our approach is also valid in proving homogenization results for the family of functions $\left\{u_{\nu}^{T}\right\}$ as $T\to \infty$, where $u_{\nu}^{T}: \mathcal{H}_{\nu}\rightarrow \RR$ is the unique viscosity solution of 
\begin{equation*}
\begin{cases}
    H(\nabla u_{\nu}^{T}, Tx)=0&\text{in $\mathcal{H}_{\nu}$,}\\
    u_{\nu}^{T}=0&\text{on $\Sigma_{\nu}$,}
    \end{cases}
\end{equation*}
whenever the Hamiltonian $H$ satisfies the following:
\begin{itemize}
    \item $H(\cdot, x)$ is convex, 1-homogeneous, and coercive, i.e. 
    \begin{equation*}
        \lim_{R\to\infty} \inf\left\{H(p,y): |p|\geq R, x\in \RR^{N}\right\}=+\infty. 
    \end{equation*}
    \item $H(p,\cdot)$ is Lipschitz continuous and periodic \emph{or} almost periodic. 
\end{itemize} 
Furthermore, if $u^{T}_{\nu}$ is in fact defined on all of $\RR^{N}$, and $u^{T}_{\nu}=-u^{T}_{-\nu}$, then one can obtain a homogenization result on all of $\RR^{N},$ using the same arguments as in the proofs of Theorem \ref{t.planarint} and Theorem \ref{t.ap}. 
 % \begin{remark}

% therefore yields information on the rescaled problem 
%\begin{equation}\label{e.planmet}
% \begin{cases}
%|\nabla h_{\nu}|=\sqrt{a(T_{m}y)}&\text{in $\left\{x\cdot \nu\geq 0\right\}$},\\
%  h_{\nu}=0&\text{on $\left\{x\cdot \nu=0\right\}$},
%  \end{cases}
%\end{equation}
%along a subsequence as $T_{m}\rightarrow \infty$.

\subsection{Motivation with Connections to the Homogeneous Modica-Mortola Problem}

%Let us point out that, in the context of scalar conservation laws, such a theorem has been known for quite a while \cite{CF}, whereas, in the context of periodic homogenization of the heterogeneous eikonal equation, such a characterization is new. 

The homogeneous case of $a \equiv 1$ reduces to the famous problem in phase transitions and the calculus of variations. The resulting homogeneous cell problem in the cell formula \eqref{e.sigma} for $\sigma$ is 
\begin{equation}\label{e.cell-a=1}
 \inf \left\{\int_{TQ_\nu} \left[ W(u) +\frac{1}{2}|\nabla u|^2\right]  \,dy : u \in \mathcal{C}(TQ_{\nu}) \right\}.\end{equation}
 %In the homogenous case, the parameter $T$ scales out in the limit, and we may hence set $T=1$. 
A classical argument using algebraic manipulations, made famous in the work of Modica and Mortola \cite{ModMort}, yields that as $T\to\infty$, the minimizer is the solution to  
\begin{equation}\label{e.inteikonal}
\begin{cases}
\frac{1}{2}|\nabla u|^{2}= W(u)\quad \text{in $\mathbb{R}^{N}$,} \\
u(y)\rightarrow \pm 1~\text{, as $y\cdot \nu\rightarrow \pm \infty$}.
\end{cases}
\end{equation}
It is clear that \eqref{e.inteikonal} encodes equipartition of energy between the gradient singular perturbation term and the potential term in the energy, see \eqref{e.cell-a=1}. The optimal $u$ for  \eqref{e.cell-a=1} is thus given by 
\[ u (y)  = q \circ (y\cdot \nu),\] where $q$ satisfies 
% In this isotropic setting, blowing up around a regular point (assumed, without loss of generality, to be the origin) of the phase boundary that separates the $\{u \equiv \pm 1 \}$ phases, one seeks a competitor of the form $u(x) \approx q\left(\frac{x \cdot \nu}{\e} \right),$ where $\nu$ denotes the unit normal to the phase boundary at the point (see Figure \ref{f.blowup}). In turn, the profile $q,$ in the blown up variables, solves a one-dimensional problem  given by 
%\begin{align*}
%q \in \mathrm{argmin} \left\{ \int_{-\infty}^\infty \left[ W(Q) + {|Q^\prime|^2}\right]\,dz: \lim_{z \to \pm \infty} Q(z) = \pm 1\right\}.
%\end{align*}
the associated Euler-Lagrange ODE, 
\begin{align} \label{e.heteroclinicint}
q^{\prime \prime} = W^\prime(q) \quad \quad \lim_{z \to \pm \infty} q(z) = \pm 1,
\end{align}
which is translation-invariant. Associated to this continuous symmetry, Noether's theorem yields a conservation law. This can be more simply derived, by multiplying \eqref{e.heteroclinicint} by $q^\prime,$ and integrating. We obtain ${|q^\prime|^2} = 2 W(q),$ a relation which dictates equipartition of energy. The solution  to (\ref{e.heteroclinicint}) is $q(z)=\tanh (\sqrt{2}z)$. 
%\begin{figure}
%\includegraphics[scale=.3]{blow up.PNG}
%\caption{Blown up profile when $a \equiv 1$}
%\label{f.blowup}
%\end{figure}
 Note that  $y \cdot \nu$ is precisely the signed distance to the minimizing interface  $\Sigma_\nu = \{x \in \RR^N: x \cdot \nu = 0\}$.  The resulting surface tension is the constant 
 \begin{align*}
\sigma = \sqrt{2}\int_{-1}^1 \sqrt{W(s)}\,ds.
\end{align*}

Turning to the minimizer of the inhomogeneous cell problem 
\begin{equation}\label{e.cell-a}
 \inf \left\{\int_{TQ_\nu} \left[ a(y) W(u) +\frac{1}{2}|\nabla u|^2\right]  \,dy : u \in \mathcal{C}(TQ_{\nu}) \right\},\end{equation}
one could expect that  $q \circ (y\cdot \nu)$ is simply replaced with $q \circ h_\nu (y)$, where the inhomogeneous distance function $h_\nu$ is defined by (\ref{e.hnudefintro}). Indeed, we see that by definition of $h_{\nu}$, we have 
\begin{align*}
\frac{1}{2}|\nabla q(h_{\nu}(y))|^{2}=\frac{1}{2} (q^\prime(h_\nu(y))^{2} |\nabla h_\nu(y)|^2 = a(y) W(q(h_\nu(y)),  
\end{align*}
in which case $q\circ h_{\nu}$ achieves equipartition of energy. With this in hand, the cell formula  \eqref{e.sigma} for $\sigma$ (assuming the limit exists) would take the form  
\begin{equation}\label{e.cell-?}
\sigma(\nu) \, = \, \lim_{T \to \infty} \frac{1}{T^{N-1}} \int_{TQ_\nu}\left[a(y) W(q \circ h_\nu)+\frac{1}{2}|\nabla (q \circ h_\nu)|^2\right]  \,dy. \end{equation}
That is, in Theorem \ref{t.sigmarep}, we might have $\la_{0}(\nu)=0$ with $\sigma (\nu) = 
\underline{\lambda}(\nu) = \overline{\lambda}(\nu)$. 
This is false, at least for rational directions $\nu$, and we address why in the following subsection. Moreover, it is never the case that $q \circ h_\nu$ is a minimizer of the inhomogeneous cell problem 
(\ref{e.cell-a}) for any $T<\infty$. 
What Theorem \ref{t.sigmarep} shows, however,  is that this simple explicit formula yields upper and lower bounds for $\sigma$. Moreover, we make the case that on large scales as $T \to \infty$, the minimizer of 
(\ref{e.cell-a}) is close to $q \circ h_\nu$ (see Proposition \ref{p.wt} in comparison with \eqref{e.metriccom}).

%These bounds are far stronger that 
%Note that 
%Indeed, the classical results quoted above, concerning the energies \eqref{e.fepdef} when $a = \mathrm{constant},$ yield 
%\begin{align} \label{e.basicbound}
%2\sqrt{\theta} \int_{-1}^1 \sqrt{W(s)}\,ds \leqslant \sigma(\nu) \leqslant 2\sqrt{\Theta}\int_{-1}^1 \sqrt{W(s)}\,ds, \quad \quad \nu \in \mathbb{S}^{N-1}. 
%\end{align}
%While this basic bound does guarantee the non-degeneracy of $\sigma,$ it does not:
%\begin{itemize}
%\item take into account the effects of the microstructure associated to the heterogeneity $a,$ and
%\item reflect the \textit{anisotropy} in $\sigma(\cdot)$ from \eqref{e.sigma} i.e., the dependence of $\sigma(\nu)$ on $\nu.$ 
%\end{itemize}
 In the theory of homogenization, questions about bounds on the effective coefficients have a rather long and rich history in the context of optimal design (see \cite{Allaire,KohnEd,LurieCherkaev,Miltonbook,KohnMilton,MuratTartar} among many, many other references). This body of literature (largely) deals with effective bounds on linear elliptic (systems) of PDE using the homogenization method. Closer in spirit to our work is the paper \cite{BhattaKohn}, where the authors use nonlinear homogenization to study the so-called ``shape memory effect'' in polycrystals: the viewpoint there being that the heterogeneities in the texture field of the polycrystal within a nonconvex mesoscopic variational theory, upon nonlinear homogenization, yields a macroscopic theory whose global minimizers are recoverable strains. This coarse-graining procedure yields valuable bounds on the possible recoverable strains of the polycrystal-- information that is not directly accessible from the mesoscopic theory.  
 % Along these lines, in our present context of heterogeneous phase transitions, we ask: are the bounds in \eqref{e.basicbound} attainable by a periodic microstructure, or can one get sharper bounds? This, and related questions are clearly well-motivated--
%\begin{itemize}
%    \item mathematically: phase-transitions in the presence of heterogeneities are ubiquitous. To this end, coupling oscillatory and concentration effects, can one obtain robust, attainable bounds for effective coefficients even within the context of the model, scalar problem considered in this paper? 
%    \item physically: can such bounds be of use in ``inverse homogenization''? (see for instance \cite{JamesTame}, where the author raises the question of finding suitable directions of aligning heterogeneous precipitates in relation to a martensetic phase transition). 
%\end{itemize}
%Addressing such questions practically, entails obtaining \textit{easily computable} bounds that are not suboptimal compared to \eqref{e.basicbound},  and displays the macroscopic anisotropic effects arising from the microscopic heterogeneities modeled by the $a\left(\frac{\cdot}{\e}\right)$ term. This is exactly what we accomplish in this paper, as we argue presently

\subsection{Outline of the Proofs and Discussion}\label{ss.discuss}
Here we outline the proof of Theorem \ref{t.sigmarep}. We then discuss whether or not it is possible for $\lambda_0(\nu)=0$ and $\sigma (\nu) = 
\underline{\lambda}(\nu)$, and this leads us to a discussion of Theorem \ref{t.planarint}. 

The upper bound in Theorem \ref{t.sigmarep} is more or less immediate: it essentially comes from energy comparison. We do need to alter the boundary conditions in the cell formula \eqref{e.sigma}, and this is achieved by the standard De Giorgi slicing technique, see Appendix \ref{s.app}. 
This procedure yields that for any $\nu \in \mathbb{S}^{N-1},$ the surface tension $\sigma(\nu),$ defined in \eqref{e.sigma}, has the alternative representation given by
\begin{align} \label{e.newbcint}
    \sigma(\nu) = \lim_{T \to \infty} \frac{1}{T^{N-1}} \inf \left\{ \int_{TQ_\nu} \left[ a(y) W(u) + \frac{1}{2}|\nabla u|^2\right]\,dy : \text{$u = q \circ h_\nu$ on $\partial (TQ_\nu)$}\right\}. 
\end{align}
Having proven the upper bound in Theorem \ref{t.sigmarep}, we turn to the lower bound. By the Direct Method in the Calculus of Variations, for each fixed $T$ there is a minimizer of the problem inside, which we denote by $u_T$. In other words, 
\begin{equation*}
    u_{T}:=\mathrm{argmin} \left\{ \frac{1}{T^{N-1}}\int_{TQ_\nu} \left[ a(y) W(u) + \frac{1}{2}|\nabla u|^2\right]\,dy : \text{$u = q \circ h_\nu$ on $\partial (TQ_{\nu})$}\right\}.
\end{equation*}
Define 
\begin{align*}
    \phi(z) := \sqrt{2}\int_0^z \sqrt{W(s)}\,ds, \quad \quad z \in \RR.
\end{align*}
Consider the function $h_\nu$ introduced in \eqref{e.hnudefintro}. It is easily shown (see Lemma \ref{l.hlips} below) that 
\begin{align*}
    |\nabla h_\nu(y) | = \sqrt{a(y)}, \quad \quad \text{for a.e. $y \in \RR^N$}.
\end{align*}
%For the lower bound, roughly speaking, we show that along subsequences, the distance functions to planes, $h_\nu,$ defined in \eqref{e.hnudefintro}, are essentially linear on large scales (i.e., $h_\nu(y) \approx c(\nu) y \cdot \nu$ on large scales, in the precise sense of Theorem \ref{t.planarint}). 
%\subsection{Main Results, and Organization:}
For any $T \gg 1,$ completing squares we find
\begin{equation} \label{e.3}
    \begin{aligned}
   & \frac{1}{T^{N-1}}  \int_{TQ_\nu} \left[ a(y) W(u_T) + \frac{1}{2}|\nabla u_T|^2\right]\,dy \\& \quad = \frac{\sqrt{2}}{T^{N-1}}\int_{TQ_\nu} \nabla h_\nu \cdot \sqrt{W(u_T)}\nabla u_T\,dy + \frac{1}{T^{N-1}} \int_{TQ_\nu}\left| \frac{\nabla u_T}{\sqrt{2}} - \sqrt{W(u_T)}\nabla h_\nu\right|^2\\
   &\quad = \frac{1}{T^{N-1}}\int_{TQ_\nu} \nabla h_\nu \cdot \nabla (\phi(u_T))\,dy + \frac{1}{T^{N-1}} \int_{TQ_\nu}\left| \frac{\nabla u_T}{\sqrt{2}} - \sqrt{W(u_T)}\nabla h_\nu\right|^2\\
   &\quad \geq \frac{1}{T^{N-1}} \int_{TQ_\nu} \nabla h_\nu \cdot \nabla (\phi(u_T))\,dy \\
   &\quad = \frac{1}{T^{N-1}}\int_{TQ_\nu} \nabla h_\nu \cdot \nabla (\phi(q \circ h_\nu)  )\,dy + \frac{1}{T^{N-1}}\int_{TQ_\nu} \nabla h_\nu \cdot \nabla \left( \phi(u_T) - \phi(q \circ h_\nu)  \right)\,dy\\
   &\quad = \frac{1}{T^{N-1}} \int_{TQ_\nu} \left[a(y) W(q \circ h_\nu) + \frac{1}{2}|\nabla (q \circ h_\nu)|^2 \right]\,dy \\ & \quad \quad +  \frac{1}{T^{N-1}}\int_{TQ_\nu} \nabla h_\nu \cdot \nabla \left( \phi(u_T) - \phi(q \circ h_\nu)  \right)\,dy,
       \end{aligned}
\end{equation}
where, in the last line, we used the fact that $q \circ h_\nu$ achieves equipartition of energy, while completing squares one more time. Defining 
\begin{equation*}
\begin{aligned}
    \overline{\lambda}(\nu) := \limsup_{T \to \infty} \frac{1}{T^{N-1}} \int_{TQ_\nu} \left[a(y) W(q \circ h_\nu) + \frac{1}{2} |\nabla (q \circ h_\nu)|^2 \right]\,dy, \\
    \underline{\lambda}(\nu) := \liminf_{T \to \infty} \frac{1}{T^{N-1}} \int_{TQ_\nu} \left[a(y) W(q \circ h_\nu) + \frac{1}{2} |\nabla (q \circ h_\nu)|^2 \right]\,dy,
\end{aligned}
\end{equation*}
the proof of the lower bound in Theorem \ref{t.sigmarep} (specifically \eqref{e.sigrep}) is now immediate, provided we can show that
\begin{align} \label{e.lambda0}
    \limsup_{T \to \infty} \frac{1}{T^{N-1}}\left|\int_{TQ_\nu} \nabla h_\nu(y) \cdot \left( \phi(u_T) - \phi(q \circ h_\nu)\right)\,dy \right| =: \lambda_0(\nu)\leq \La_{0},
\end{align}
for some $\Lambda_0 > 0.$ Obtaining this error bound is complicated by the fact that $\la_{0}(\nu)$ couples oscillations and concentrations. In Section \ref{s.conc}, we present novel tools and concentration estimates in order to control $\la_{0}(\nu)$, which we briefly summarize here:
\begin{enumerate}
\item We recall that for each $T > 1,$ the function $u_T \in H^1(TQ_\nu)$ is a minimizer of the variational problem in \eqref{e.newbcint}. In Theorem \ref{l.step1} below, we show that, as $T \to \infty,$
 $\{u_T(T\,\cdot)\}_{T > 0}$ converges to $u_{0,\nu}$ \textit{strongly} in $L^1(Q_\nu)$ , where $u_{0,\nu}$ is given by 
\begin{align*}
    u_{0,\nu}(y) := \left\{
    \begin{array}{cc}
        1 & y \cdot\nu > 0, \\
        -1 & y \cdot \nu < 0.
    \end{array}
    \right.
\end{align*}
We emphasize that this convergence is not simply along a subsequence, since the limit is unique. This further implies that $\phi(q \circ h_\nu(T
\,\cdot))$ also converges in $L^1(Q_\nu)$ to the same limit, $u_{0,\nu}.$ It follows that 
\begin{equation}\label{e.phiconv}
\phi(u_T(T\,\cdot)) - \phi(q \circ h_\nu(T\,\cdot)) \to 0
\end{equation}
in $L^1(Q_\nu)$ as $ T \to \infty.$ 
\item For each $T \gg 1,$ writing $u_T(y) =: \tanh(\sqrt{2}w_T(y)),$ we show in Lemma \ref{l.bounds} that 
\begin{align}
    \label{l.compup}
    -1<  u_T(y) < 1,  \quad \quad y \in TQ_\nu.
\end{align}
We prove in Proposition \ref{p.wt} that there exist positive numbers $\alpha_0, \eta_0$ depending only on $\nu,\theta$ and $\Theta,$ such that
\begin{equation}\begin{aligned}
    \label{l.complow}
 \sqrt{\Theta}(y\cdot \nu) - \alpha_0 \geqslant    w_T(y) \geqslant \sqrt{\theta}(y \cdot \nu) - \eta_0 \quad \mbox{ if } w_T > 0,\\
-\sqrt{\theta}(y\cdot \nu) + \eta_0 \geqslant     w_T(y) \geqslant -\sqrt{\Theta}(y \cdot \nu) + \alpha_0 \quad \mbox{ if } w_T(y) < 0.
\end{aligned}\end{equation}
In particular, this shows that the profiles $u_T$ and $q \circ h_\nu,$ in blown up variables as in (a) above, converge to the sharp interface limit at the same rate (see \eqref{e.metriccom}). 
\end{enumerate}

Naturally, the above mentioned results are proven in order to pass to the limit in the error term in \eqref{e.3}. It is easy to see that the topology of convergence in item (1) is not strong enough to conclude that the asymptotic contribution of this error term vanishes. 
However, we are able to put these ingredients together, to obtain bounds on the error term in \eqref{e.3}. This is carried out in Section \ref{s.errorbound}.  

These results beg the following natural questions: 
\begin{itemize}
\item Can the error term $\lambda_0(\nu)=0$, with $\sigma (\nu) = 
\underline{\lambda}(\nu) = \overline{\lambda}(\nu)$?
\item Is it ever the case that the minimizer $u_T$ to the cell problem  is simply $q \circ h_\nu$?
\end{itemize}
The answer to the second question is no unless $a$ is constant. The same is true for the first question in the case of rational directions $\nu$ (i.e., when $\nu$ has rational components). In fact, when $\nu\in \mathbb{Q}^{N}\cap \mathbb{S}^{N-1}$, William Feldman and Peter Morfe \cite{willpeter} have recently shown the authors the following: if
(\ref{e.cell-?}) holds true in a rational direction $\nu$, then one can argue that $q \circ h_\nu$ must be a minimizer on an infinite strip. An analysis of the Euler-Lagrange equations readily leads to a contradiction in that $h_{\nu}$ must be harmonic (which is true if and only if $a$ is constant). {We are unaware of an argument, but in fact, it is perhaps the case that if $\lambda_0(\nu) = 0$ for any $\nu \in \mathbb{S}^{N-1},$ then $a$ must be constant. Nevertheless, for any $\nu\in \mathbb{S}^{N-1}$ such that $\la_{0}(\nu)\neq 0$, this implies that, surprisingly, equipartition of energy does not hold.}

An interesting question, which we are unable to resolve here, is the following: {for a given choice of periodic heterogeneities $a,$ is at least one of the bounds in Theorem \ref{t.sigmarep} close to being sharp? Naturally, verifying the sharpness of the lower bound in Theorem \ref{t.sigmarep} requires passing to the limit in the term $\lambda_0(\nu);$ this in turn requires convergence of $\{\nabla h_\nu(T_m\, \cdot)\}_m$ in suitable topologies.} We provide partial progress in this direction with the second main contribution of this paper in Theorem \ref{t.planarint}:  we show that there exists a unique $c(\nu)\in [\sqrt{\theta}, \sqrt{\Theta}]$ such that , for any sequence $\{T_m\}_{m \in \mathbb{N}}$ tending to infinity, for the functions $k_m(\cdot) := T_{m}^{-1} h_\nu(T_m\,\cdot),$
\begin{align*}
    \{k_m\}_m \mbox{ converges locally uniformly in $\RR^{N}$ to } x \mapsto c(\nu)x \cdot \nu. 
\end{align*}

We prove Theorem \ref{t.planarint} in two steps. In Lemma \ref{l.hompmp}, we first show that for every sequence $\left\{T_{m}\right\}$ tending to infinity, there exists a subsequence and a function $c(\nu)\in [\sqrt{\theta}, \sqrt{\Theta}]$ such that $k_{m}(x)\to c(\nu)x\cdot \nu$ locally uniformly in $\RR^{N}$. The proof of Lemma \ref{l.hompmp} uses various properties of Bohr almost periodic functions and ideas which come from the proof of the Stone-Weierstrauss theorem. Upon establishing Lemma \ref{l.hompmp}, we then argue that $c(\nu)$ must be unique in order to establish convergence of the full sequence.

Our uniqueness argument relies on the existence of periodic correctors in the setting of periodic Hamilton-Jacobi equations (see Theorem \ref{t.lpv}). It was pointed out to us that an alternative approach to proving that 
\begin{align*}
    \{k_m\}_m \mbox{ converges locally uniformly in $\mathcal{H}_{\nu}$ to } x \mapsto c(\nu)x \cdot \nu, 
\end{align*}
could use the existence of periodic correctors, a comparison principle on half-spaces (stated in \cite{FS}, without proof), and the perturbed test function method of Evans \cite{evans}. While this argument may appear more direct to specialists in homogenization of Hamilton-Jacobi equations, we highlight that aside from the existence of periodic correctors, the proof of Theorem \ref{t.planarint} is entirely self-contained. Our results concerning the large scale behavior for $h_\nu$ are contained in Section \ref{s.pmp}.

\section{Basic Properties of \texorpdfstring{$h_{\nu}$}{} and Equipartition of Energy}

\subsection{Existence and Basic Properties of \texorpdfstring{$h_{\nu}$}{}}\label{ss.hnu} We introduce a Riemannian metric in $\RR^N,$ which is conformal to the standard Euclidean one. To be precise, given a Lipschitz curve $\gamma : [0,1] \to \RR^N$, we define its length to be
\begin{align*}
L(\gamma) := \int_0^1 \sqrt{a(\gamma(t))}|\dot{\gamma}(t)|\,dt.
\end{align*}
Naturally, $L(\gamma)$ does not depend on the parametrization of $\gamma.$ We define the distance between points $y_1,y_2 \in \RR^N$ in the $\sqrt{a}-$metric, by 
\begin{align}
\label{e.dist}
d_{\sqrt{a}}(y_1,y_2) := \inf_{\gamma(0) = y_1,\gamma(1) = y_2} L(\gamma).  
\end{align}
{The existence of a minimizer, i.e., a geodesic in \eqref{e.dist}, and its regularity, follow by classical arguments via the Hopf-Rinow theorem, since $a$ is bounded away from zero by \eqref{h.nondeg} (for details, see \cite[Lemma 2.9]{S88}), thereby rendering $\RR^N$ geodesically complete. } %\jlcomment{See Irene's comments, and consolidate into 1 statement}
\par Let $\nu \in \mathbb{S}^{N-1},$ set $\Sigma_\nu := \{x \in \RR^N : x \cdot \nu = 0\},$ and define $h_\nu : \RR^N \to \RR$ by 
\begin{align} \label{e.sgddist}
h_\nu(x) := \left\{ 
\begin{array}{cc}
d_{\sqrt{a}}(x, \Sigma_\nu) & \mbox{ if } x \cdot \nu \geqslant 0, \\
-d_{\sqrt{a}}(x, \Sigma_\nu) & \mbox{ if } x \cdot \nu < 0.
\end{array}
\right.
\end{align}
The function $h_{\nu}(x)$ represents a signed distance function from $x$ to the plane $\Sigma_{\nu}$. 
\begin{remark}\label{r.hodd}
Observe that, by \eqref{e.sgddist}, and since $\Sigma_{\nu}=\Sigma_{-\nu}$, we have
\begin{equation*}
h_{\nu}(x)=\begin{cases} d_{\sqrt{a}}(x, \Sigma_{\nu}), &x\cdot \nu \geqslant  0\\
-d_{\sqrt{a}}(x, \Sigma_{\nu}), &x\cdot \nu < 0,
\end{cases}
\quad\text{and}\quad 
h_{-\nu}(x)=\begin{cases} d_{\sqrt{a}}(x, \Sigma_{\nu}), &x\cdot (-\nu) \geqslant  0,\\
-d_{\sqrt{a}}(x, \Sigma_{\nu}), &x\cdot (-\nu) < 0,
\end{cases}
\end{equation*}
which imply that
\begin{equation*}\label{e.hodd}
h_{-\nu}(x)=-h_{\nu}(x). 
\end{equation*}
In particular, $h_{\nu}$ is odd with respect to $\nu$. 
As $h_{\nu}$ is a type of signed distance, it in fact satisfies an Eikonal equation.

%We see from \eqref{e.sgddist} that we automatically have that
%\begin{equation*}
%h_{-\nu}(x)=-h_{\nu}(x).
%\end{equation*}
\end{remark}
\begin{lemma}
\label{l.hlips}
 The function $h_\nu$ is Lipschitz continuous in $\RR^N,$ with 
\begin{align*}
|\nabla h_\nu(x)| = \sqrt{a(x)} \quad \mbox{for a.e. } x \in \RR^N.
\end{align*}
\end{lemma}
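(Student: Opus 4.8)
The plan is to establish Lipschitz continuity first, then compute the gradient a.e. via the classical theory of distance functions to closed sets in a Riemannian manifold. For Lipschitz continuity, I would work separately in the closed half-spaces $\{x \cdot \nu \geqs 0\}$ and $\{x\cdot\nu \leqs 0\}$ and then patch. Within $\{x\cdot\nu\geqs 0\}$, the triangle inequality for $d_{\sqrt a}$ gives $|h_\nu(x) - h_\nu(y)| = |d_{\sqrt a}(x,\Sigma_\nu) - d_{\sqrt a}(y,\Sigma_\nu)| \leqs d_{\sqrt a}(x,y)$, and the upper bound $a \leqs \Theta$ from \eqref{h.nondeg} gives $d_{\sqrt a}(x,y) \leqs \sqrt{\Theta}\,|x-y|$ by using the straight-line curve. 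The same bound holds in the other half-space, and since $h_\nu$ vanishes on $\Sigma_\nu$, a short argument comparing a point on one side to a point on the other (passing through $\Sigma_\nu$, or noting both sides' values are controlled by distance to $\Sigma_\nu$) yields global Lipschitz continuity with constant $\sqrt{\Theta}$. Note also the lower bound $h_\nu$ inherits: $|h_\nu(x)| = d_{\sqrt a}(x,\Sigma_\nu) \geqs \sqrt{\theta}\,\mathrm{dist}(x,\Sigma_\nu) = \sqrt\theta\,|x\cdot\nu|$, using $a \geqs \theta$; this will be needed later in the paper (cf. \eqref{e.qTplanebnds}) though not strictly for this lemma.

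For the eikonal identity $|\nabla h_\nu(x)| = \sqrt{a(x)}$ a.e., the key point is that $x \mapsto d_{\sqrt a}(x,\Sigma_\nu)$ is, locally, a distance function in the Riemannian metric $g$, and such functions satisfy $|\nabla d|_g = 1$ a.e. (equivalently $|\nabla d|_{\text{eucl}}^2 = a$), a classical fact. I would make this precise as follows. By Rademacher's theorem $h_\nu$ is differentiable a.e.; fix such a point $x$ with, say, $x\cdot\nu > 0$ (the case $x\cdot\nu<0$ is symmetric, and $\Sigma_\nu$ is Lebesgue-null so ignorable). Let $\gamma$ be a unit-speed (in $g$) minimizing geodesic from $\Sigma_\nu$ to $x$, which exists by geodesic completeness as noted after \eqref{e.dist}. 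Along $\gamma$ one has the semigroup/additivity property $h_\nu(\gamma(s)) = s$ for the arclength parameter, so differentiating, $\nabla h_\nu(x) \cdot \dot\gamma(T) = 1$ where $T = h_\nu(x)$; since $|\dot\gamma(T)|_{\text{eucl}} = 1/\sqrt{a(x)}$, Cauchy-Schwarz gives $|\nabla h_\nu(x)|_{\text{eucl}} \geqs \sqrt{a(x)}$. For the reverse inequality, for any unit vector $e$ and small $t>0$, the curve obtained by following $\gamma$ then the straight segment from $x$ to $x+te$ shows $h_\nu(x+te) \leqs h_\nu(x) + \int_0^t \sqrt{a(x+se)}\,ds$, hence $\partial_e h_\nu(x) \leqs \sqrt{a(x)}$ by continuity of $a$ (hypothesis \eqref{h.smootha}); taking $e$ in the direction of $\nabla h_\nu(x)$ gives $|\nabla h_\nu(x)|_{\text{eucl}} \leqs \sqrt{a(x)}$.

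The main obstacle is justifying the additivity relation $h_\nu(\gamma(s)) = s$ along the minimizing geodesic, i.e., that subsegments of a minimizing geodesic from the hyperplane remain minimizing and that the foot point on $\Sigma_\nu$ behaves well — this is where one must be slightly careful, since $\Sigma_\nu$ is a set rather than a point. The cleanest route is: if $\gamma:[0,T]\to\RR^N$ realizes $d_{\sqrt a}(x,\Sigma_\nu)$ with $\gamma(0)\in\Sigma_\nu$, then for $0\leqs s\leqs T$, $d_{\sqrt a}(\gamma(s),\Sigma_\nu) \leqs L(\gamma|_{[0,s]}) = s$, while if it were strictly less we could replace $\gamma|_{[0,s]}$ by a shorter curve and contradict minimality of $\gamma$; hence $h_\nu(\gamma(s)) = d_{\sqrt a}(\gamma(s),\Sigma_\nu) = s$ exactly. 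Combined with the differentiability of $h_\nu$ at $x$ and the chain rule, this closes the argument. A minor technical remark worth including is that the a.e. differentiability set of $h_\nu$ has full measure in each open half-space and $\Sigma_\nu$ contributes zero measure, so the identity holds a.e. on all of $\RR^N$.
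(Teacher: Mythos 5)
Your proposal is correct in substance, but note that the paper does not actually prove this lemma: it simply cites \cite[Lemma 11]{S88}. So you are supplying a self-contained argument where the authors defer to a reference, and your argument is the standard one for eikonal/distance functions: Lipschitz continuity from the triangle inequality plus $a \leqs \Theta$, the upper bound $|\nabla h_\nu(x)| \leqs \sqrt{a(x)}$ from the one-sided comparison $h_\nu(x+te) \leqs h_\nu(x) + \int_0^t \sqrt{a(x+se)}\,ds$ together with continuity of $a$ (hypothesis \eqref{h.smootha}), and the lower bound from the additivity $h_\nu(\gamma(s)) = s$ along a minimizing geodesic, which you justify correctly by the concatenation/contradiction argument. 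The one point I would tighten: since $a$ is only assumed continuous, the unit-speed minimizer $\gamma$ need not be differentiable at its endpoint, so the step ``$\nabla h_\nu(x)\cdot \dot\gamma(T) = 1$ and $|\dot\gamma(T)| = 1/\sqrt{a(x)}$'' is not literally available. This is easily repaired without pointwise differentiability of $\gamma$: from $h_\nu(\gamma(T)) - h_\nu(\gamma(s)) = T-s$, differentiability of $h_\nu$ at $x$, and $|\gamma(T)-\gamma(s)| \leqs \int_s^T |\dot\gamma(t)|\,dt = \int_s^T a(\gamma(t))^{-1/2}\,dt$, one gets $T-s \leqs |\nabla h_\nu(x)|\int_s^T a(\gamma(t))^{-1/2}\,dt + o(T-s)$, and dividing by $T-s$ and letting $s \to T^-$ (using continuity of $a$ along the continuous curve $\gamma$) yields $|\nabla h_\nu(x)| \geqs \sqrt{a(x)}$. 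With that adjustment the proof is complete and matches what the cited reference establishes.
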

\begin{proof} 
See \cite[Lemma 11]{S88}.
\end{proof}

Since $|\nabla{h_{\nu}}|=\sqrt{a}\in [\sqrt{\theta}, \sqrt{\Theta}]$ by \eqref{h.nondeg}, by \eqref{e.dist} and \eqref{e.sgddist} we have
\begin{equation}\label{e.metriccom}
\begin{cases}
\sqrt{\theta}|x\cdot \nu|\leqslant h_\nu(x)\leqslant \sqrt{\Theta}|x\cdot \nu|&\text{if $x\cdot \nu\geqslant  0$,}\\
-\sqrt{\Theta}|x\cdot \nu|\leqslant h_{\nu}(x)\leqslant -\sqrt{\theta}|x\cdot \nu|&\text{if $x\cdot \nu<0$.}
\end{cases}
\end{equation}
\subsection{Equipartition of Energy: \texorpdfstring{$|\nabla u|^2 = 2 a(x) W(u)$}{}} \label{s.eikonal}
In this section we use the Riemannian geometry framework introduced above to find approximate ``one-dimensional'' solutions to the degenerate Eikonal equation 
\begin{align} \label{e.eik}
\frac{|\nabla u |^2}{2} = a(x) W(u)
\end{align}
in large cubes in $\RR^N,$ in a sense to be made precise. This analysis is crucial in the proof of Theorem \ref{t.sigmarep}. %\subsubsection{Entire solutions to the degenerate eikonal equation}
%At this point we turn to obtaining ``one-dimensional'' solutions to the equation 
%\begin{align} \label{e.eik}
%\frac{1}{2}|\nabla u|^2 = a(y) W(u),
%\end{align}
%which is the mean reason for the geometric framework. 
Taking inspiration from the cell formula \eqref{e.f0def}, for $\nu \in \mathbb{S}^{N-1}$, we seek solutions $u$ to \eqref{e.eik} that ``connect'' the zeroes of $W,$ i.e., $u(x) \to \pm 1$ as $ x\cdot \nu \to \pm \infty.$ 
Consider the ansatz
\begin{align*}
u(x) := (q\circ \dstf)(x), 
\end{align*}
for some $q:\mathbb{R}\rightarrow \RR$ to be determined. Inserting this into \eqref{e.eik}, we obtain 
\begin{align*}
\frac{1}{2} (q^\prime(h_\nu(x))^{2} |\nabla h_\nu(x)|^2 = a(x) W(q(h_\nu(x)). 
\end{align*}
As $|\nabla h_{\nu}| = \sqrt{a}$ pointwise a.e. (see Lemma \ref{l.hlips}), the function $q$ must satisfy the ordinary differential equation 
\begin{align} \label{e.hetero}
q^\prime = \sqrt{2}\sqrt{W(q)}. 
\end{align}
By \eqref{e.metriccom}, we see that $h(x)\rightarrow \pm\infty$ as $x\cdot \nu\rightarrow \pm\infty$. In particular, to connect the zeros of $u$ at $\pm\infty$, we require that $q(z) \to \pm 1 $ as $z \to \pm \infty.$ In order to identify this function $q$, we consider a suitable initial condition associated to \eqref{e.hetero} in Proposition \ref{p.qprop}. 

For convenience, we recall some basic properties of the hyperbolic tangent and secant functions, $\tanh$ and $\sech$, respectively, which will be used throughout the rest of the paper: 
\begin{equation}\label{e.tanhsech}
\begin{cases}
\text{$\tanh(x)=\frac{e^{x}-e^{-x}}{e^{x}+e^{-x}}$ is an odd function,}\\
\text{$-1< \tanh(x)< 1$, for all $x \in \RR,$}\\
\text{There exists $c_{1}, c_{2}>0$ such that $\begin{cases}\lim_{x \to \infty} \left|1-\tanh(x)\right|\leq c_{1}e^{-c_{2}|x|},\\ \lim_{x \to -\infty} \left|-1-\tanh(x)\right|\leq c_{1}e^{-c_{2}|x|},\end{cases}$}\\
1-\tanh^{2}(x)=\sech^{2}(x), \text{for all $x \in \RR$,}\\
|\sech(x)|=\left|2\frac{e^{x}}{e^{2x}+1}\right|\leqslant 2e^{-|x|}\text{ is an even function, and $0\leqslant \sech(x)\leqslant 1,  \forall x \in \RR$} ,\\
%\text{$\tanh(x)$ is odd and $\sech(x)$ is even,}\\
(\sech(x))^\prime = -\tanh (x) \sech(x), (\tanh(x))^\prime=\sech^{2}(x),  \forall x \in \RR, \\
\text{$\sech^{4}(x)$ is decreasing on $(0,\infty)$}.
\end{cases}
\end{equation}

\begin{proposition}\label{p.qprop}
There exists a unique solution to 
\begin{equation}\label{e.q}
q^\prime = \sqrt{2}\sqrt{W(q)}, \quad q(0)=0.
\end{equation}
% \quad \lim_{z \to \pm \infty} q(z) = \pm 1.
Moreover, there exist $c_{1}, c_{2}>0$ such that 
\begin{equation}\label{e.expbnds}
\begin{cases}
q(z)  \geqslant 1-c_1 e^{-c_2|z|} & \mbox{ if } z > 0 , \\
q(z)  \leqslant -1+c_1 e^{-c_2|z|} & \mbox{ if } z < 0. 
\end{cases}
\end{equation}
%The ODE \eqref{e.hetero} has solutions defined for all $z \in \RR.$ It is increasing and satisfies the following exponential decay estimate:
%there exist $c_1,c_2 > 0$ such that we have 
%\begin{align}
%|q(z) - 1| &\leqslant c_1 e^{-c_2z} \quad \mbox{ if } z > 0 , \mbox{ and }\\
%|q(z) + 1| &\leqslant c_1 e^{c_2 z} \quad \mbox{ if } z < 0. 
In particular, $q(z) \to \pm 1 $ as $z \to \pm \infty.$
\end{proposition}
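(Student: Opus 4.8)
The plan is to take advantage of the explicit form $W(u) = (1-u^2)^2$, which makes the right-hand side of \eqref{e.q} equal to $f(q) := \sqrt{2}\sqrt{W(q)} = \sqrt{2}\,|1-q^2|$. While $\sqrt{W}$ could a priori fail to be Lipschitz at the zeros of $W$, in this product case $|1-q^2| = |1-q|\,|1+q|$ is locally Lipschitz on all of $\RR$, in particular across $q = \pm 1$. Hence the Picard--Lindel\"of theorem applies at the initial value $q(0) = 0$ and produces a unique solution on a maximal interval of existence.

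Next I would upgrade this to a global solution confined to $(-1,1)$. Since $f \geqslant 0$, every solution is nondecreasing; since $q \equiv 1$ and $q \equiv -1$ are themselves solutions of the ODE, the uniqueness just established forbids the solution emanating from $q(0) = 0$ from reaching $\pm 1$ in finite ``time.'' Thus the solution stays in $(-1,1)$, cannot blow up, and extends to all $z \in \RR$. One may moreover integrate \eqref{e.q} explicitly: on $(-1,1)$ one has $dq/(1-q^2) = \sqrt{2}\,dz$, whence $\operatorname{arctanh} q = \sqrt{2}\,z$, that is,
\begin{align*}
q(z) = \tanh(\sqrt{2}\,z).
\end{align*}
This closed form is convenient but not essential for what follows.

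For the exponential bounds \eqref{e.expbnds}, first observe that because $W$ is even, $z \mapsto -q(-z)$ also solves \eqref{e.q}, so by uniqueness $q$ is odd; it therefore suffices to treat $z > 0$. For such $z$ we have $q(z) \in (0,1)$, so $1 + q(z) \geqslant 1$, and setting $v(z) := 1 - q(z) > 0$ we get
\begin{align*}
v'(z) = -q'(z) = -\sqrt{2}\,(1-q(z))(1+q(z)) \leqslant -\sqrt{2}\,v(z).
\end{align*}
Integrating this differential inequality (equivalently, noting $(e^{\sqrt{2}z} v(z))' \leqslant 0$) and using $v(0) = 1$ gives $v(z) \leqslant e^{-\sqrt{2}z}$, i.e. $|q(z)-1| \leqslant e^{-\sqrt{2}z}$ for $z>0$. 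The bound for $z<0$ then follows from oddness, $|q(z)+1| = v(-z) \leqslant e^{\sqrt{2}z}$, so \eqref{e.expbnds} holds with $c_1 = 1$, $c_2 = \sqrt{2}$; in particular $q(z) \to \pm 1$ as $z \to \pm\infty$.

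There is no genuine obstacle here: the one point requiring a moment's care is the regularity of $\sqrt{W}$ at the wells, which is resolved by the explicit product structure, and the passage from local to global existence, which is handled by comparison with the constant solutions $q \equiv \pm 1$.
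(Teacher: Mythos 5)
Your proof is correct and follows essentially the same route as the paper's (very terse) argument: identify $q(z)=\tanh(\sqrt{2}z)$, get uniqueness from standard scalar ODE theory, and derive the exponential bounds; the paper simply invokes the fact that an autonomous ODE with continuous right-hand side and non-critical initial datum has a unique global solution and cites a reference for \eqref{e.expbnds}, whereas you supply the details (local Lipschitz continuity of $\sqrt{W}=|1-q^2|$, confinement to $(-1,1)$ by comparison with the equilibria $q\equiv\pm1$, oddness, and the Gronwall-type inequality $v'\leqslant-\sqrt{2}v$). All of these steps check out.
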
 

\begin{proof}
It is easy to see that $q(z):=\tanh (\sqrt{2}z)$ is the unique solution to \eqref{e.q}. The exponential bounds \eqref{e.expbnds}  immediately follow from \eqref{e.tanhsech}.
\end{proof}

\section{Properties of minimizers in the cell problem} \label{s.conc}
By Lemma \ref{l.newbc}, we have
%and since $\delta_{T}T=\sech^{C}(T)T\rightarrow 0$ as $T\rightarrow \infty$, we arrive at 
\begin{align}\label{e.tvar}
\sigma(\nu)&=  \lim_{T \to \infty} \frac{1}{T^{N-1}} \inf \Big\{\int_{TQ_\nu} \left[ a(y)W(u)+ \frac{1}{2}|\nabla u|^2  \right] \,dy : \notag\\ &\qquad u \in H^1(TQ_\nu), u|_{\partial TQ_\nu} = q \circ h_{\nu} \Big\}\notag\\
& =\lim_{T \to \infty} \inf \Big\{\int_{Q_\nu} \left[ Ta(Tx)W(V)+ \frac{1}{T}\frac{|\nabla V|^2}{2}  \right] \,dx : \notag\\ &\qquad V \in H^1(Q_\nu), V|_{\partial Q_\nu} = q \circ h_{\nu}(T\cdot) \Big\} .
\end{align}
%\begin{equation}
%\left|\frac{1}{T_j}h_\nu(T_jx)-c(\nu)x\cdot \nu\right|\leq \alpha |x \cdot \nu|,
%\end{equation}
%By Lemma \ref{l.hompmp}, we obtain the existence of $c(\nu) \in [\sqrt{\theta},\sqrt{\Theta}]$, such that for every $\al>0$, there exists $T(\alpha ) \in \mathbb{N}$ such that along a subsequence which we do not relabel, for all $T_j \geqslant T(\alpha)$,
%\begin{align}
%\left|\frac{1}{T_j}h_\nu(T_jx) - c(\nu) x \cdot \nu \right| &\leqslant \alpha |x \cdot \nu|, \quad \text{for all $x \in Q_{\nu}$,} \label{e.lshomog}
%%\left| \nabla_y h_\nu\big\vert_{y = T_j x} - c(\nu) \nu \right| &\leqslant \alpha\quad \text{for all $x\in Q_{\nu}$}. \label{e.gradclose}
%\end{align}
In the remainder of this section, we suppress the subscript $m$ for notational ease, with the understanding that when we let $T \to \infty$ in the end, we do so along this particular subsequence $T_m \to \infty.$ 
 %u^*_{T}
 
We introduce the function $v_{T}\in H^{1}(Q_{\nu})$ satisfying
\begin{align}\label{e.vTdef}
v_T \in  \mathrm{argmin} \left\{ E_T(V) := \int_{Q_\nu} \left[ Ta(Tx) W(V) + \frac{1}{T}\frac{|\nabla V|^2 }{2}\right]\,dx : V|_{\partial Q_\nu} = q \circ h_\nu(Tx)\right\}.
\end{align}
Since $q \circ h_\nu(T\,\cdot)$ is an admissible competitor in the variational problem \eqref{e.tvar}, we may assume that 
\begin{equation}\begin{aligned} \label{e.competitor}
&\int_{Q_\nu} \left[ Ta(Tx) W(v_T) + \frac{1}{T}\frac{|\nabla v_T|^2}{2} \right] \,dx \\
&\leqslant \int_{Q_\nu} \left[ Ta(Tx) W(q\circ h_{\nu}(Tx)) + \frac{1}{T}\frac{|\nabla q\circ h_{\nu}(Tx)|^2 }{2} \right] \,dx\\
&\leqslant O(1)
\end{aligned}\end{equation}
as $T \to \infty.$

\begin{lemma}\label{l.step1}
Let $v_{T}: Q_{\nu}\rightarrow \RR$ satisfy \eqref{e.vTdef}. 
%\begin{align}\label{e.vTdef}
%v_T \in  \mathrm{argmin} \left\{ E_T(V) := \int_{Q_\nu} \left[ Ta(Tx) W(V) + \frac{1}{T}\frac{|\nabla V|^2 }{2}\right]\,dx : V|_{\partial Q_\nu} = q_T \circ h_\nu(Tx)\right\}.
%\end{align}
 There exists a subsequence, not relabeled, such that 
\begin{align}
\label{e.limcon}
v_T \to u_0 \quad in \quad L^1(Q_\nu), 
\end{align}
where, we recall, $u_{0}: \RR^{N}\rightarrow \RR$ is defined by
\begin{align*}
u_0(x) := \left\{
\begin{array}{cc}
1 &  x \cdot \nu > 0,\\
-1 & x \cdot \nu < 0.
\end{array}
\right.
\end{align*}  
\end{lemma}

\begin{proof}
Since $v_{T}$ satisfies \eqref{e.vTdef}, it verifies the uniform energy bound \eqref{e.competitor}.
As $a$ is bounded away from zero, this estimate yields, via a standard compactness argument using the Modica-Mortola inequality,  that $\{v_T\}$ is precompact in $L^1(Q_\nu)$ (see \cite{FT89} or \cite{S88}). Let $U$ be an $L^1$ cluster point of $\{v_T\}_T.$ By \eqref{e.tvar}, the energies of the minimizers $v_T$ converge to $\sigma(\nu).$

We recall that $\sigma(\nu)$ is the limiting energy corresponding to $u_{0}$, and we claim that $U = u_0$.  We extend $v_T$ to all of $\RR^N$ by setting $v_T(x) := q\circ h_\nu(Tx)$ for $x \not \in Q_\nu$ and, likewise, we extend $U$ to all of $\RR^N$ by setting $U = u_0$ outside $Q_\nu.$  Let $\tau > 0$ be fixed, and we work in the dilated cube $(1 + \tau)Q_\nu.$ We label the restrictions of $v_T$ and $U$ to $(1 +\tau)Q_\nu$ by $\tilde{v}_T, \tilde{U},$ respectively. By \cite[Theorem 5.8]{EG}, $\tilde{U} \in BV((1+\tau)Q_\nu).$ We note that by the aforementioned compactness arguments,
\begin{align}
\label{e.taucompactness}
\tilde{v}_T \to \tilde{U} \quad \mbox{ in } L^1((1+\tau)Q_\nu) \mbox{ as } T \to \infty,\\ \label{e.taucompactness2}
D\tilde{v}_T \rightharpoonup D\tilde{U} \quad \mbox{ weakly-* in the sense of measures as $T\rightarrow \infty$,}
\end{align}  
for all $\tau > 0.$  
Since $U$ is piecewise constant, $\nabla \tilde{U} = 0,$ and   
we find that $dD \tilde{U}= 2\nu_{\tilde{U}}\,d\sh^{N-1} \lfloor J_{\tilde{U}}, $ where $J_{\tilde{U}}$ is the jump set of $\tilde{U}$ and $\nu_{\tilde{U}} = \frac{\,d\,D\tilde{U}}{\,d\,|D\tilde{U}|} $ on $J_{\tilde{U}}$ (see \cite{maggibook}). 
We claim that 
\begin{equation}\label{e.bvstep}
\int_{J_{\tilde{U}} \cap \overline{Q}_\nu} \,d D \tilde{U} = 2\nu . 
\end{equation}
By \eqref{e.taucompactness2}, for every $\phi \in C_c((1 + \tau)Q_\nu)$ and for every unit vector $e \in \mathbb{S}^{N-1},$  we have 
\begin{align} \label{e.weakconvergence}
\int_{(1+\tau)Q_\nu} \phi e \cdot \nabla \tilde{v}_{T} \,dx = \int_{(1+\tau)Q_\nu} \phi e \cdot dD\tilde{v}_T  \to  \int_{(1+\tau)Q_\nu} \phi e \cdot dD\tilde{U}  \quad \quad \mbox{ as } T \to \infty.   
\end{align} 

In particular, let $\phi \in C_c^\infty((1+\tau)Q_\nu)$ be such that $\phi \equiv 1 $ on $\overline{Q}_\nu,$ $0 \leqslant \phi \leqslant 1,$ and $\phi \equiv 0$ on $(1+\tau)Q_\nu \backslash (1 + \tau/2) Q_\nu.$ If $e \in \{\nu_1,\cdots, \nu_N\},$ we then have that as $T\rightarrow \infty$, 
\begin{align*}
\int_{(1+\tau/2)Q_\nu\backslash  Q_\nu } \phi e \cdot \nabla \tilde{v}_T \,dx +   \int_{Q_\nu} e \cdot \nabla \tilde{v}_T\,dx \to \int_{(1+\tau/2)Q_\nu \backslash \overline{Q}_\nu} \phi e \cdot D\tilde{U}+ \int_{\overline{Q}_\nu} e \cdot \,dD\tilde{U} . 
\end{align*}
As $\tilde{v}_T \equiv q_T \circ h_\nu(T\, \cdot)$ and $\tilde{U} = u_0$ outside $Q_\nu$, we find that the first and the third terms in the previous display are $O(\tau^{N-1}).$ It remains to evaluate the limit of the second term as $T \to \infty$ . With the choice $e = \nu_N = \nu$, by the fundamental theorem of Calculus, we find that, as $T \to \infty,$ 
\begin{align*}
\int_{Q_\nu} \nu \cdot \nabla \tilde{v}_T \,dx \to 2, \quad \quad \mbox{ as } T \to \infty,
\end{align*} 
because $q_\nu \circ h_\nu(T\,\cdot)$ is exponentially close to $1$ and $-1$ respectively, on the top and bottom faces of $Q_\nu,$ i.e., $\{x \in \overline{Q}_\nu: x \cdot \nu = \pm \frac{1}{2}\}. $
It follows that 
\begin{align*}
\int_{\overline{Q}_\nu} \nu \cdot \,d D\tilde{U} = 2 + O(\tau^{N-1}).
\end{align*}
Finally, for the lateral directions $e = \nu_1,\cdots, \nu_{N-1},$ we have, 
\begin{align*}
\int_{Q_\nu} e \cdot\, D\tilde{v}_T \,dx &= \int_{Q_{\nu}\cap \left\{x \cdot e = \frac{1}{2}\right\}} q\circ h_\nu(Tx)\,d \sh^{N-1}(x) \\
&\quad \quad \quad \quad \quad - \int_{Q_{\nu}\cap \left\{x \cdot e  = -\frac{1}{2}\right\}} q\circ h_\nu(Tx) \,d \sh^{N-1}(x)  \\ & \xrightarrow[T\to \infty]{} \int_{Q_{\nu}\cap \left\{x \cdot e = \frac{1}{2}\right\}} u_0\,d \sh^{N-1}(x) - \int_{Q_{\nu}\cap \left\{x \cdot e = -\frac{1}{2}\right\}} u_0 \,d \sh^{N-1}(x)= 0,
\end{align*}
We deduce that 
\begin{align} \label{e.averagetau}\int_{\overline{Q}_\nu} \,d D\tilde{U} = \sum_{i=1}^N \left( \int_{\overline{Q}_\nu} \,d D\tilde{U}\cdot \nu_i \right)\nu_i = 2\nu + O(\tau^{N-1}).
\end{align}
Since $D\tilde{U} = 2 \sh^{N-1}\lfloor J_{\tilde{U}},$ it follows that 
\begin{align} \label{e.jutau}
J_{\tilde{U}}\cap \overline{Q}_\nu = J_U \cup \{ x \in \partial Q_\nu : \mathrm{trace}(U)(x) \neq u_0(x)\} =: K_U.
\end{align}
and the set $K_U$ on the right hand side is independent of $\tau > 0.$ Indeed, note that the extension $\tilde{U}$ of $U$ does not depend on $\tau,$ and we now call it $U^0.$ The Radon-Nikodym derivative $\frac{\,dDU^0}{\,d|DU^0|}$ is equal to $\nu_U$ on $J_U,$ and it is equal to the normal to the boundary  of $Q_\nu$, $\nu_{\partial Q_\nu}$, on $K_U \backslash J_U.$ Now \eqref{e.averagetau} reduces to 
\begin{align*}
\int_{K_U} \,d DU^0 = 2\nu + O(\tau^{N-1}).
\end{align*}
Letting $\tau \to 0^+$ we deduce that 
\begin{align} \label{e.dude}
\int_{K_U} DU^0= 2\nu.
\end{align}
By Theorem \ref{t.fonseca} on $(1 + \tau)Q_\nu$ for each fixed $\tau,$ then sending $\tau \to 0^+$, and then using Jensen's inequality owing to the convexity of the one-homogeneous extension of $\sigma$, $\tilde{\sigma}$, we have
\begin{equation} \label{e.jensen1}\begin{aligned}
&\sigma(\nu) =\lim_{\tau \to 0^+} \liminf_{T \to \infty} E_T(\tilde{v}_T;(1+\tau)Q_\nu) \\ & \quad \quad \geqslant \limsup_{\tau \to 0} \int_{{J}_{\tilde{U}} \cap (1 + \tau)Q_\nu} {\sigma}\left( \,\nu_{\tilde{U}}\right)\,d \sh^{N-1} \geqslant \limsup_{\tau \to 0} \int_{{J}_{\tilde{U}} \cap \overline{Q}_\nu} {\sigma}\left( \,\nu_{\tilde{U}}\right)\,d \sh^{N-1}\\&\quad \quad = \int_{K_U} \sigma \left( \frac{\,d\,DU^0}{\,d\,|DU^0|}\right) \,d \sh^{N-1} \\
&\quad \quad =  \int_{K_U} \tilde\sigma \left( \frac{\,d \,DU^0}{\,d|\,DU^0|} \sh^{N-1}(K_U) \right)  \frac{\,d \sh^{N-1}}{\sh^{N-1}(K_U)}  \\
&\quad \quad \geqslant \tilde{\sigma}\left( \int_{K_U} \frac{\,d\,DU^0}{\,d |\,DU^0|} \sh^{N-1}(K_U) \frac{\, d\sh^{N-1}}{\sh^{N-1}(K_U)}\right). 
\end{aligned}
\end{equation}
But $|DU^0|\lfloor K_U = 2 \sh^{N-1} \lfloor K_U,$ and we find by the one-homogeneity of $\tilde{\sigma}$ that 
\begin{align} \label{e.jensen2}
\tilde{\sigma}\left( \int_{K_U} \frac{\,d\,DU^0}{\,d |\,DU^0|} \right)\sh^{N-1}(K_U) = \frac{1}{2}\tilde{\sigma}\left(\int_{K_U} \,d \,DU^0 \right).
\end{align}
Again using the one-homogeneity of $\tilde\sigma,$ the equality \eqref{e.dude} implies that the right hand side of \eqref{e.jensen2} evaluates to $\frac{1}{2}\tilde{\sigma}(2\nu) = \tilde{\sigma}(\nu) = \sigma(\nu).$ In turn, plugging this into the chain of inequalities in \eqref{e.jensen1}, we learn that we must have equalities throughout. But equality holds in Jensen if and only if $\frac{\,d\,DU^0}{\,d|\,DU^0|} \lfloor K_U $ is a constant. This immediately implies that $\sh^{N-1} \left( x \in \partial Q_\nu: \mathrm{trace}(U) \neq u_0\right) = 0,$ and thus that $U$ inherits the trace $u_0$ from the sequence $\{v_T\}.$ Furthermore, we conclude $K_U = J_U$ up to a set of $\sh^{N-1}$ null measure, and so, $U \equiv u_0$ in $Q_\nu,$ yielding \eqref{e.limcon}. 

\end{proof}

%%%%%%%%%%%%%%%%%%%%%%%%%%%%%%%%%%%
For what follows, we need finer, quantitative versions of the foregoing convergence result and, in particular, of the convergence of the functions $u_T.$ The remainder of this section is devoted to obtaining these estimates. The next preparatory lemma is an immediate consequence of the maximum principle. 
\begin{lemma} \label{l.bounds}
Let $u_T$ be a minimizer to \eqref{e.newbcint}. Then 
\begin{align*}
   -1 <  u_T(y) < 1, \quad y \in TQ_\nu.
\end{align*}
\end{lemma}
\begin{proof}
For each $T$, as $u_T$ is a minimizer of the energy 
\begin{align*}
    \int_{TQ_\nu} \left[ a(y) W(u) + \frac{1}{2}{|\nabla u|^2} \right]\,dy,
\end{align*}
subject to Dirichlet boundary conditions $u_T = q\circ h_\nu$ along $\partial (TQ_\nu)$, it follows by standard arguments that $u_T$ is a classical solution of the associated Euler-Lagrange equations 
\begin{equation}
\begin{cases}
    \Delta u = a(y) W^\prime(u)=-4a(y)u(1-u^{2}) & y \in TQ_\nu,\\
    u(y) = q \circ h_\nu(y), & y \in \partial (TQ_\nu).
    \end{cases}
\end{equation}
We know that for any $T < \infty,$ $\sup_{y \in \partial(TQ_\nu)} |q \circ h_\nu| < 1.$ Suppose, by way of contradiction, that there exists $y_0 \in TQ_\nu$ such that 
\begin{align*}
    u_T(y_0) = \max_{y \in \overline{TQ_\nu}} u_T(y)> 1.
\end{align*}
Then $\Delta u_T (y_0) \leqslant 0,$ while $W^\prime(u_T(y_0)) > 0,$ and $a(y_0)\geq \theta > 0$, yielding a contradiction. It follows that $u_T(y)\leq 1$ for every $y \in TQ_\nu.$ A similar argument shows that $u_T(y)\geq -1$ for every $y \in TQ_\nu.$ Finally, a standard argument (as in the proof of the strong maximum principle) using the Hopf lemma yields the desired strict inequalities. 
\end{proof}
Define 
\begin{align} \label{e.wtdef}
    w_T := \frac{1}{\sqrt{2}}\tanh^{-1} u_T.
\end{align}
By Lemma \ref{l.bounds}, $w_T: TQ_\nu \to (-\infty,\infty)$ is smooth. Further, $w_T$ is a classical solution to the PDE 
\begin{equation} \label{e.wtpde}
    \begin{cases}
    \Delta w_T(y) = \frac{4}{\sqrt{2}}\tanh (\sqrt{2}w_T(y)) \left( |\nabla w_T(y)|^2 - a(y) \right), &  y \in TQ_\nu,\\
    w_T(y) = h_\nu(y)& y\in \partial(TQ_{\nu}).
    \end{cases}
\end{equation}
In the remainder of this section we obtain fine properties of the function $w_T,$ specifically in Proposition \ref{p.wt} below. A crucial ingredient in the argument is the following result due to L. Caffarelli and A. Cordoba \cite[Theorem 2]{CC}. 
\begin{proposition} \label{l.cc}
Consider the functions $u_T:TQ_\nu \to \RR.$ 
Then, as $T \to \infty,$ for each $\mu\in (-1,1)$ the level sets $\{x \in TQ_\nu: u_T(x) = \mu\}$ are at a uniformly bounded distance from $\Sigma_\nu \cap TQ_\nu.$ To be precise, for each $\mu \in (-1,1)$ there exists a constant $\eta(\mu,\nu) > 0,$ only depending on $\mu$ and $\nu,$ and independent of $T \gg 1$, such that 
\begin{align}
    \{y \in TQ_\nu : u_T(x) = \mu \} \subset \{y \in TQ_\nu: |y\cdot \nu| < \eta(\mu,\nu)\}. 
\end{align}
\end{proposition}
Equipped with the foregoing proposition, we are ready to state the proof the main result of this section, namely, that the functions $w_T$ defined in \eqref{e.wtdef} are essentially linear. 
\begin{proposition}
\label{p.wt}
Let $w_T$ be as in \eqref{e.wtdef}, let $T \gg 1$, and define the constants $\eta_0 := \sqrt{\theta}\eta(0,\nu) > 0,$ and $\alpha_0 := \sqrt{\Theta}\eta(0,\nu) > 0,$ where $\eta(0,\nu)$ is obtained from Proposition \ref{l.cc} corresponding to the level set $\mu =0$. Then, for all $T \gg 1,$ the following hold: 
\begin{equation} \label{e.linearlbnd}
    \begin{aligned}
    &\sqrt{\Theta} (y\cdot \nu) - \alpha_0 \geq w_T(y) \geq \sqrt{\theta}(y \cdot \nu) - \eta_0 \quad &\mbox{ if } w_T (y) > 0, \\
    &- \sqrt{\theta} (y\cdot \nu) + \eta_0 \geq w_T(y) \geq - \sqrt{\Theta}(y \cdot \nu)+ \alpha_0 \quad &\mbox{ if } w_T(y) < 0. 
    \end{aligned}
\end{equation}
\end{proposition}
\begin{proof}
Owing to the continuity of $w_T,$ the sets 
\begin{align*}
    \Omega_\pm := \{y \in TQ_\nu : w_T(y) \gtrless 0 \}
\end{align*}
are open. We show the lower bound in the first statement in \eqref{e.linearlbnd}. Define the function $\zeta_T : \overline{\Omega_+} \to \RR$ by the formula
\begin{align*}
    \zeta_T(y) := \frac{y \cdot \nu}{w_T(y) + \eta_0}, \quad y \in \overline{\Omega_+}.
\end{align*}
Being a continuous function on the compact set $\overline{\Omega_\pm}$, it achieves its maximum. The assertion in the first inequality of \eqref{e.linearlbnd} is that the maximum value of this function is no more than $\frac{1}{\sqrt{\theta}}.$ Suppose, by contradiction, this were false. Let $y_0\in \overline{\Omega_+}$ be a point at which $\zeta_T$ achieves its maximum, and 
\begin{align}\label{e.conthyp}
    \zeta_T(y_0) > \frac{1}{\sqrt{\theta}}.
\end{align}
There are three possibilities, which we will now argue can never occur: 
\begin{enumerate}
    \item \underline{$y_0 \in \overline{\Omega_+} \cap \partial (TQ_\nu):$}  by virtue of \eqref{e.metriccom}, along $\partial (TQ_\nu)$ we know that $w_T(y) = h_\nu(y) \geqslant \sqrt{\theta}(y \cdot \nu)$ . This implies that $w_T(y) + \eta_0 > \sqrt{\theta}(y\cdot \nu)$ for every $y \in \partial (TQ_\nu) \cap \overline{\Omega_+}.$ Thus, under the contradiction hypothesis \eqref{e.conthyp}, $\zeta_T$ cannot attain its maximum here.
    \item \underline{$y_0 \in \Omega_+:$} in this case, $y_0$ would be an interior maximum point of $\zeta_T,$ and so, 
    \begin{align} \label{e.intmax}
        \nabla \zeta_T(y_0) = 0, \quad \quad \Delta \zeta_T(y_0) \leqslant 0. 
    \end{align}
    Towards ruling out this case, we derive the PDE satisfied by $\zeta_T.$ From the definition of $\zeta_T,$ we note that at any $y \in \Omega_+,$
    \begin{align} \label{e.gradzeta}
        \nu = (w_T(y) + \eta_0) \nabla \zeta_T(y) + \zeta_T(y) \nabla w_T(y).
    \end{align}
Taking divergence of this relation and applying \eqref{e.wtpde}, we find that at any $y \in \Omega_+$,
\begin{equation}   \label{e.zetapde} \begin{aligned}
        &0 = 2 \nabla \zeta_T(y) \cdot \nabla w_T(y) + (w_T(y) + \eta_0)\Delta \zeta_T(y) + \zeta_T(y) \Delta w_T(y)\\
        &\quad = 2 \nabla \zeta_T(y) \cdot \nabla w_T(y) + (w_T(y)+ \eta_0) \Delta \zeta_T(y) \\ &\quad \quad + \frac{4}{\sqrt{2}} \zeta_T(y) \tanh (\sqrt{2}w_T(y)) \left( |\nabla w_T(y)|^2 - a(y) \right).
    \end{aligned}\end{equation}
     In order to evaluate \eqref{e.zetapde} at $y=y_{0}$, we note that from \eqref{e.gradzeta} and \eqref{e.intmax}, we have
    \begin{align*}
        \nu = \zeta_T(y_0) \nabla w_T(y_0).
    \end{align*}
By the contradiction hypothesis \eqref{e.conthyp}, this yields
    \begin{align} \label{e.strictgrt}
        |\nabla w_T(y_0)| = \frac{1}{\zeta_T(y_0)} < \sqrt{\theta}.
    \end{align}
    Moreover, the contradiction hypothesis \eqref{e.conthyp} also guarantees that $y_{0}\cdot \nu>0$, since $y_{0}\in \Om_{+}$. Inserting this into \eqref{e.zetapde} at $y = y_0$, and applying \eqref{e.intmax}, \eqref{e.strictgrt}, and $a\geq \theta$, we have
    \begin{align*}
        0 &= \Delta \zeta_T(y_0) + \frac{4}{\sqrt{2}} \zeta_T(y_0) \frac{\tanh (\sqrt{2}w_T(y_0))}{w_T(y_0) + \eta_0} \left( \frac{1}{\zeta_T^2(y_0)} - a(y_{0}) \right)\\
        &< \frac{4}{\sqrt{2}}(y_{0}\cdot \nu)\tanh (\sqrt{2}w_{T}(y_{0}))(\theta-\theta),
    \end{align*}
  which yields a contradiction. 
    \item $y_0 \in \Omega_+ \cap \{w_T = 0\}:$ finally, if this were to hold, we would have $w_T(y_0) = 0,$ so that 
    \begin{align*}
        \zeta_T(y_0) = \frac{y_0 \cdot \nu}{\eta_0} > \frac{1}{\sqrt{\theta}},
    \end{align*}
    i.e., $y_0 \cdot \nu > \frac{\eta_0}{\sqrt{\theta}} = \eta(0,\nu).$ But $w_T(y_0) = 0$ implies that $u_T(y_0) = \tanh(w_T(y_0))$ is 0, and by Proposition \ref{l.cc} we must have $|y_0 \cdot \nu| \leqslant \eta(0,\nu)$, provided $T \gg 1.$ We again conclude in a contradiction. 
\end{enumerate}
This implies that the contradiction hypothesis \eqref{e.conthyp} cannot hold, and the proof of the lower bound in the first equation in \eqref{e.linearlbnd} is complete. The proof of the other inequalities is similar, with only minor differences.
\end{proof}
Having proven Proposition \ref{p.wt}, we are able to get fine exponential decay estimates for the function $u_T$ and its derivatives, away from the interface $\Sigma_\nu. $ 
\begin{proposition} \label{p.gradestut}
For all fixed $T$ sufficiently large, 
\begin{align} \label{e.potut}
    1 - u^2_T(y) \leqslant C e^{-c|y \cdot \nu| }, \quad \quad y \in TQ_\nu.
\end{align}
Here, $c = 4 \sqrt{\theta}$.
%\rvcomment{double check what $c$ is. }
Moreover, there exists a universal constant $C_1 > 0$ such that for all $T \gg 1,$
\begin{align} \label{e.graduT}
    |\nabla u_T(y)| \leqslant C_1e^{-c|y \cdot \nu|}
\end{align} 
\end{proposition}
\begin{proof}
The first inequality is immediate by noting that $1 - u_T^2 = 1 - \tanh^2 (\sqrt{2} w_T) = \sech^2 (\sqrt{2} w_T),$ and $w_T$ satisfies the estimates in Proposition \ref{p.wt}, and \eqref{e.tanhsech}. For the second, by the Euler-Lagrange equations we know that 
$$
|\Delta u_T(y)| = |a(y) W^\prime(u_T)|=|4a(y)u_{T}(1-u_{T}^{2})| \leqslant C e^{-c |y \cdot \nu|} \quad \quad y \in TQ_\nu. 
$$
Rescaling, by setting $y = Tx$ and defining $v_T(x) := u_T(Tx),$ we find that 
\begin{align*}
    |\Delta v_T(x)| = T^2 a(Tx)|W^\prime(v_T(x))| \leqslant CT^2 e^{-c T|x \cdot \nu|}, \quad \quad x \in Q_\nu
\end{align*}
Elliptic estimates yield 
\begin{align*}
    |\nabla v_T(x)| \leqslant C_1T e^{-cT|x \cdot \nu|}. 
\end{align*}
Scaling back, one recovers \eqref{e.graduT}. 
\end{proof}
\section{Bounds on the Error Term} \label{s.errorbound}
Recall the remainder term $\lambda_0$ introduced in \eqref{e.lambda0}. The main result of this section is next. 
\begin{proposition} There exists a constant $\Lambda_0 > 0$ such that
\begin{align} \label{e.errortermfin}
\la_{0}(\nu) \leqslant \La_{0} \quad \mbox{ for all } \nu \in \mathbb{S}^{N-1}.
\end{align}
\end{proposition}
\begin{proof}
We know that $|\nabla h_\nu(y)|\leqslant \sqrt{\Theta}.$ Moreover, from Proposition \ref{p.wt} and \ref{p.gradestut} we have that
\begin{align*}
    |\nabla \phi(u_T(y))| = |\phi^\prime(u_T(y)) \nabla u_T(y)| = \sqrt{2}(1 - u_T^2(y)) |\nabla u_T (y)| \leqslant C e^{-c|y \cdot \nu|}, 
\end{align*}
and, similarly,
\begin{align*}
    |\nabla \phi(q \circ h_\nu)| = \sqrt{2}(1 - \tanh^2(\sqrt{2} h_\nu))|\nabla h_\nu| \leqslant Ce^{-c|y \cdot \nu|}. 
\end{align*}
Then, for $\la_{0}(\nu)$ defined by \eqref{e.lambda0},
\begin{align*}
    &\la_{0}(\nu)=\limsup_{T \to \infty}\frac{1}{T^{N-1}} \left| \int_{TQ_\nu} \nabla (\phi(u_T(y)) - \phi(q \circ h_\nu)) \cdot \nabla h_\nu\right| \\ &\quad 
    \leqslant \sqrt{\Theta} \limsup_{T \to \infty} \frac{1}{T^{N-1}}\int_{TQ_\nu} C e^{-c|y \cdot \nu|}\,dy \\
    &\leqslant C\sqrt{\Theta} \int_{-\infty}^\infty e^{-c|s|}\,ds = \frac{C\sqrt{\Theta}}{\sqrt{\theta}e}=:\La_{0},
\end{align*}
where, we recall from Proposition \ref{p.gradestut} that $c = 4\sqrt{\theta}. $
\end{proof}
\begin{proof}[Proof of Theorem \ref{t.sigmarep}]
As discussed in the introduction, the proof of Theorem \ref{t.sigmarep} is an immediate consequence of \eqref{e.errortermfin}. 
\end{proof}

\section{The Proof of Theorem \ref{t.planarint} and Theorem \ref{t.ap}} \label{s.pmp}
We begin by summarizing several properties of $h_{\nu}$ that will be needed in the proof of Theorem \ref{t.planarint}. 
\begin{lemma}
\label{l.per}
Let $\nu \in \mathbb{S}^{N-1} \cap \mathbb{Q}^N.$ There exist $T_0 \in \mathbb{N}$ and unit vectors $\{\nu_i\}_{i=1}^{N-1}\subseteq \mathbb{S}^{N-1} \cap \mathbb{Q}^N$ such that $\{\nu_1,\cdots, \nu_{N-1}, \nu_N := \nu\}$ form an orthonormal basis for $\RR^N.$ Moreover, the coefficient $a$ is $T_0$ periodic in the directions $\{\nu_i\}_{i=1}^N$, and $h_\nu$ is $T_0$ periodic in the directions $\{\nu_i\}_{i=1}^{N-1}.$ 
\end{lemma}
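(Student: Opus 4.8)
First I would establish the rational geometry. Since $\nu \in \mathbb{S}^{N-1} \cap \mathbb{Q}^N$, after clearing denominators we may write $\nu = k/|k|$ for some $k \in \mathbb{Z}^N$. The key number-theoretic fact is that the lattice $\mathbb{Z}^N \cap \nu^\perp = \mathbb{Z}^N \cap k^\perp$ has rank $N-1$; indeed $k^\perp$ is a rational hyperplane, so it contains a full-rank sublattice of $\mathbb{Z}^N$. Pick a basis $\{w_1,\dots,w_{N-1}\}$ of this sublattice. These need not be orthonormal, so I would apply Gram--Schmidt orthogonalization within the hyperplane $\nu^\perp$ to produce an orthonormal set $\{\nu_1,\dots,\nu_{N-1}\}$ spanning $\nu^\perp$, and set $\nu_N := \nu$; this is the desired orthonormal basis. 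The crucial point is to retain, alongside the orthonormal basis, the integer vectors $\{w_i\}$: since each $w_i \in \mathbb{Z}^N$, the function $a$, being $\mathbb{T}^N$-periodic, is periodic with period $w_i$ in the direction $w_i/|w_i|$. The orthonormal $\nu_i$ are rational combinations of the $w_j$, so one can choose $T_0 \in \mathbb{N}$ large enough (a common multiple of the relevant denominators and of $|k|$) so that $T_0 \nu_i$ lies in $\mathbb{Z}^N \cap \nu^\perp$ for every $i$, and also so that $T_0 \nu \in \mathbb{Z}^N$. This gives $T_0$-periodicity of $a$ in all the directions $\{\nu_i\}_{i=1}^N$: $a(x + T_0 \nu_i) = a(x)$ since $T_0\nu_i \in \mathbb{Z}^N$.

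Next I would transfer periodicity to $h_\nu$. The idea is that $h_\nu$ is built out of the metric $d_{\sqrt a}$ and the hyperplane $\Sigma_\nu$, both of which are invariant under the translations $x \mapsto x + T_0\nu_i$ for $i = 1,\dots,N-1$. Concretely: fix $i \in \{1,\dots,N-1\}$ and let $v := T_0 \nu_i \in \mathbb{Z}^N$. Since $v \in \nu^\perp$, the translation $\tau_v(x) := x + v$ maps $\Sigma_\nu$ onto itself and maps the half-space $H_\nu$ onto itself. Moreover, for any Lipschitz curve $\gamma$, the translated curve $\tau_v \circ \gamma$ has the same length in the metric $g$, because $L(\tau_v\circ\gamma) = \int_0^1 \sqrt{a(\gamma(t)+v)}\,|\dot\gamma(t)|\,dt = \int_0^1 \sqrt{a(\gamma(t))}\,|\dot\gamma(t)|\,dt = L(\gamma)$, using that $a(\cdot + v) = a(\cdot)$ as $v \in \mathbb{Z}^N$. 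Hence $d_{\sqrt a}(x + v, \Sigma_\nu) = d_{\sqrt a}(x,\Sigma_\nu)$ for every $x$, and since $\tau_v$ preserves the sign of $x\cdot\nu$, definition \eqref{e.sgddist} gives $h_\nu(x+v) = h_\nu(x)$, i.e., $h_\nu$ is $T_0$-periodic in the direction $\nu_i$. (Alternatively, and perhaps more cleanly, one can invoke Theorem \ref{t.manmen}: $(h_\nu)_+(\cdot + v)$ solves the same Eikonal problem \eqref{e.visc} in $H_\nu$ — the equation is $|\nabla u| = \sqrt{a}$ and $a$ is $v$-periodic, the domain $H_\nu$ and boundary data are $v$-invariant — and it is bounded below, so by the uniqueness clause it equals $(h_\nu)_+$; the same on the complementary half-space.)

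Finally I would assemble: choose $T_0$ as above, define the $\nu_i$ via Gram--Schmidt, and the two periodicity claims follow from the two paragraphs above. I expect the only genuinely delicate point to be the bookkeeping in choosing a single integer $T_0$ that simultaneously makes $T_0\nu_i \in \mathbb{Z}^N$ for all $i$ and $T_0\nu \in \mathbb{Z}^N$; this is where one must be careful that Gram--Schmidt applied to integer vectors produces vectors with rational — hence commonly-clearable — coordinates. Everything else (invariance of the metric and of $\Sigma_\nu$ under integer translations in $\nu^\perp$, hence of $h_\nu$) is a short verification. A minor subtlety worth noting: one does not need $\{\nu_i\}$ themselves to be integer vectors, only that their integer multiples $T_0\nu_i$ are; the statement of the lemma only asserts $T_0$-periodicity, so this suffices.
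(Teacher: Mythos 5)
Your second step --- transferring $T_0$-periodicity from $a$ to $h_\nu$ by translating length-minimizing curves (or, alternatively, by uniqueness of the viscosity solution in Theorem \ref{t.manmen}) --- is correct and is essentially the argument the paper gives: translate a geodesic from $x$ to $\Sigma_\nu$ by $T_0\nu_i$, note the endpoint stays on $\Sigma_\nu$ because $\nu_i\perp\nu$ and the length is unchanged because $a(\cdot+T_0\nu_i)=a(\cdot)$, and conclude by symmetry.

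The gap is in your first step, and it sits exactly at the point you flag as ``the only genuinely delicate point.'' Gram--Schmidt applied to an integer basis of $\mathbb{Z}^N\cap\nu^\perp$ does \emph{not} produce unit vectors with rational coordinates: the orthogonalization stage preserves rationality, but the normalization divides by $\sqrt{\text{rational}}$, which is generically irrational. For a concrete failure, take $\nu=\tfrac{1}{3}(1,2,2)$ and the lattice vector $w_1=(2,-1,0)\in\mathbb{Z}^3\cap\nu^\perp$; its normalization is $(2,-1,0)/\sqrt{5}$, and no integer multiple of this vector lies in $\mathbb{Z}^3$, so no $T_0\in\mathbb{N}$ makes $a$ $T_0$-periodic in that direction. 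A rational orthonormal basis of $\nu^\perp$ \emph{does} exist for every $\nu\in\mathbb{S}^{N-1}\cap\mathbb{Q}^N$, but one needs a different construction: for instance, the Householder reflection $\mathcal{R}=I-2\frac{(e_N-\nu)(e_N-\nu)^T}{|e_N-\nu|^2}$ has rational entries (since $|e_N-\nu|^2=2-2\,\nu\cdot e_N\in\mathbb{Q}$), is orthogonal, and sends $e_N$ to $\nu$, so $\{\mathcal{R}e_1,\dots,\mathcal{R}e_{N-1}\}$ is a rational orthonormal basis of $\nu^\perp$; clearing denominators then gives $T_0$. The paper sidesteps this entirely by citing \cite[Proposition 3.5]{CFHP} for the existence of the rational orthonormal basis and the common period $T_0$. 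So: your overall architecture is the same as the paper's, but the construction you propose for the basis is incorrect as stated and needs to be replaced by the reflection argument (or a citation).
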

\begin{proof}
By an appeal to \cite[Proposition 3.5]{CFHP}, there exist $\nu_1,\cdots,\nu_{N-1} \in \mathbb{Q}^N\cap \mathbb{S}^{N-1}$ and $T_0 \in \mathbb{N}$ such that $\{\nu_i\}_{i=1}^N$ is an orthonormal basis of $\RR^N,$ and $a$ is $T_0-$periodic in each of the directions $\{\nu_i\}_{i=1}^N.$ We prove the periodicity of $h_\nu$ in the directions $\left\{\nu_{i}\right\}_{i=1}^{N-1}$. We fix $x \in \RR^N,$ and show that for any $i \in \{1,\cdots, N-1\},$ 
\begin{align*}
h_\nu(x + kT_0 \nu_i) = h_\nu(x), \mbox{ for all } k \in \mathbb{Z}.
\end{align*}
We note that if $x\cdot \nu=0$, then the estimate is automatic since both sides of the equation are 0. Without loss of generality, we may assume that $x \cdot \nu > 0$ and $k \geqslant 0.$ Let $y \in \Sigma_\nu$ and $\gamma:[0,1] \to \RR^N$ be such that $\gamma(0) = x, \gamma(1) = y ,$ and $\int_0^1 \sqrt{a(\gamma(t))}|\dot{\gamma}(t)| \,dt = h_\nu(x).$ The existence of such a geodesic follows by classical arguments. For each $\nu_{i}$, $i=1, \ldots, N-1$, we define $\tilde{\gamma}:[0,1] \to \RR^N$ by $\tilde{\gamma}(t) := \gamma(t) + kT_0 \nu_i.$  Since $\nu_{i}\perp\nu$, we have $\tilde{\gamma}(1)\cdot \nu=\ga(1)\cdot \nu+kT_{0}\nu_{i}\cdot \nu=y\cdot \nu=0$, which implies  $\tilde{\ga}(1) \in \Sigma_\nu$. We also note that $\tilde{\gamma}(0) = x + k T_0 \nu_i$. Hence, by the $T_{0}$ periodicity of $a$ with respect to $\nu_{i},$ we have
\begin{align*}
&h_\nu(x + kT_0 \nu_i) = d_{\sqrt{a}}(x + kT_0 \nu_i,\Sigma_\nu) \leqslant d_{\sqrt{a}} (x + kT_0 \nu_i, y + k T_0 \nu_i)\\
&\quad \quad \leqslant \int_0^1 \sqrt{a(\tilde{\gamma}(t))} |\dot{\tilde\gamma}(t)|\,dt = \int_0^1 \sqrt{a(\gamma(t) + kT_0 \nu_i)} |\dot{\gamma}(t)|\,dt \\
&\quad \quad =\int_0^1 \sqrt{a(\gamma(t))}|\dot{\gamma}(t)|\,dt= d_{\sqrt{a}}(x,y) = h_\nu(x).
\end{align*}
The reverse inequality follows by a symmetric argument.
\end{proof}
We now make a slight digression to almost periodic functions, which will play an important role in the characterization of the asymptotic behaviour of $h_{\nu}$ (see Lemma \ref{l.hompmp}). When $\nu \in \mathbb{S}^{N-1} \cap \mathbb{Q}^N,$ we know from Lemma \ref{l.per} that there is an orthonormal basis $\{\nu_1,\cdots, \nu_N := \nu\}\subseteq\mathbb{S}^{N-1}\cap \mathbb{Q}^{N}$, and $T_0 = T_0(\nu) \in \mathbb{N},$ such that $h_\nu$ is $T_0-$periodic in the transverse directions $\{\nu_i\}_{i=1}^{N-1}.$ {This periodicity yields an averaging property which we will exploit in the proof of Lemma \ref{l.hompmp}. When $\nu \in \mathbb{S}^{N-1} \backslash \mathbb{Q}^N,$ it turns out that an averaging property still holds, using the theory of Bohr almost periodic functions}. For the convenience of the reader, we recall the basic notions of the theory of Bohr almost periodic functions, referring to \cite{Besicovitch} for details. 
\begin{definition}\label{d.ap1}
A continuous bounded function $g: \RR^d \to \RR$ is said to be \textit{Bohr almost periodic} if for every $\eta > 0,$ there exists an {\it $\eta-$almost period} $\tau > 0$ such that for any $\alpha \in \RR^d,$ there exists $\zeta \in \alpha + \tau \blacksquare_d$ with 
\begin{align}\label{e.bap}
\sup_{x \in \RR^d} |g(x + \zeta) - g(x) | \leqslant \eta,
\end{align}
where $\blacksquare_d$ is any $d$-dimensional unit cube. 
\end{definition}

\begin{remark}
In the sequel, we use almost periodicity primarily with $d = N-1.$ Continuous periodic functions are examples of Bohr almost periodic functions (by choosing $\tau$ larger than the period, since then \eqref{e.bap} holds with $\eta=0$). 

An important feature of Bohr almost periodic functions, which we will use in the proof of Lemma \ref{l.hompmp}, is the existence of the so-called mean value. To be precise, if $f$ is a Bohr almost periodic function, then the limit 
\begin{align} \label{e.meanvalue}
\mu(f) := \lim_{T \to \infty} \frac{1}{T^d} \int_{T\blacksquare_d} f(y)\,dy = \lim_{T \to \infty} \int_{\blacksquare_d} f(Ty)\,dy
\end{align}
exists and is finite. 
\end{remark}
\begin{remark}
In what follows, We will use the definition of Bohr almost periodicity with various choices of the unit cube $\blacksquare_d$, as it turns out that the definition, and the mean value defined above, are independent of the choice of the unit cube $\blacksquare_d.$ To be precise,  let $\{V_k\}_{k=1}^\infty \subseteq \RR^N$  be a sequence of bounded domains with $\mathcal{L}^d(V_k) \to \infty$ as $k \to \infty,$ and let $\{V_k^{h}\}$ denote the set of points in $V_k$ at distance not exceeding $h$ from the boundary $\partial V_k$. If $\partial V_{k}$ is regular enough such that there exists a sequence $(h_k)_{k=1}^\infty$ with $h_k \to 0$ and $\lim_{k \to \infty} \frac{\mathcal{L}^d(V_{k}^{h_k})}{\mathcal{L}^d(V_k)} = 0,$ then the limit in \eqref{e.meanvalue} is equal to 
\begin{align*}
    \mu(f) = \lim_{k \to \infty} \fint_{V_k} f(y)\,dy. 
\end{align*}
For a proof of this assertion, see \cite[Proposition 1.9]{subin}.
\end{remark}
It is well known that $f$ is Bohr almost periodic if and only if $f$ has a uniformly convergent Bochner-Fourier series (see \cite{Besicovitch}). In particular, if $f$ is Bohr almost-periodic, then there exist an at most countable set $\Lambda \subseteq \RR^d$ of ``frequencies'', and a square-summable sequence $\{f_\lambda\}_{\lambda \in \Lambda} \subseteq \mathbb{C}$ of ``Fourier modes'', such that 
\begin{align}\label{e.fourier}
f(x) = \sum_{\lambda \in \Lambda} f_\lambda e^{i \lambda \cdot x} \quad\text{for $x \in \RR^d$,}
\end{align}
and the sum on the right is uniform and absolute. The coefficients $f_\lambda$ are given by $f_\lambda := \mu(fe^{-i\lambda \cdot (\cdot)})$ for $\mu$ as in \eqref{e.meanvalue}, and $\Lambda \subseteq \RR^d$ is the at most countable set for which $f_\lambda \neq 0.$  In particular, this implies that if $f$ is Bohr almost periodic, and  
\begin{equation}\label{l.fsnought}
\mu (f(\cdot) e^{-i\lambda \cdot (\cdot)}) = 0\quad\text{for every $\lambda \in \RR^d$},
\end{equation}
then $f \equiv 0.$ 

We will also use the notion of two-scale convegence for Bohr almost periodic functions \cite[Definition 4.1, Proposition 4.6]{CDGay}. We introduce the space $\mathcal{B}^1$ as the closure of Bohr almost periodic functions with respect to the semi-norm 
\begin{align*}
[f] := \lim_{T \to \infty} \frac{1}{T^d}\int_{T\blacksquare_d} |f(y)|\,dy = \mu(|f|).
\end{align*}
\begin{definition} \label{d.2sc}
Let $\Omega \subseteq \RR^d$ be open. We say that a sequence $\{u_\eta\} \subseteq L^1_{\mathrm{loc}}(\Omega)$ Bohr two-scale converges to $u \in L^1_{\mathrm{loc}}(\Omega;\mathcal{B}^1)$ if for every bounded function $g:\Omega \times \RR^d \to \RR$ that is continuous in the first variable and Bohr almost periodic in the second variable, we have 
\begin{align*}
\lim_{\eta \to 0} \int_\Omega u_\eta (x) g\left(x,\frac{x}{\eta}\right)\,dx = \int_\Omega \mu(u(x,\cdot)g(x,\cdot))\,dx.
\end{align*}
%\jl{For vector-valued functions, we say that a sequence $\left\{\mathbf{u_{\eta}}\right\}$ two-scale converges to $\mathbf{u}$ if every component two-scale converges in the above sense. }\jlcomment{I think we can remove this}
\end{definition}
\begin{remark}\label{r.mean}
It is proven in \cite[Proposition 4.6]{CDGay} that if $f$ is a Bohr almost periodic function, and $T_m \to \infty$ is a sequence of positive numbers, then $f_m (\cdot) := f(T_m \, \cdot)$ Bohr two-scale converge to $\mu(f)$ in any bounded open set $\Omega \subseteq \RR^d.$ 
\end{remark}
We next show that for each $\nu\in \mathbb{S}^{N-1}$, the function  $x \mapsto \frac{h_{\nu}(x)}{x\cdot \nu}$ satisfy Bohr almost periodicity as functions of the orthogonal directions. 
\begin{lemma} \label{l.almper}
Let $\nu \in \mathbb{S}^{N-1}$, and write $x \in \RR^N$ as $x = x^\prime + (x \cdot \nu)\nu \sim (x^\prime,x \cdot \nu).$ The functions $x^\prime \in \Sigma_\nu  \mapsto a(x^\prime, s)$ and $x^\prime \in \Sigma_\nu \mapsto \frac{h_\nu(x^\prime, s)}{s}$ are Bohr-almost periodic for every $s \in \RR\setminus \left\{0\right\}$, uniformly in $s.$ To be precise, for every $\eta > 0$ there exists $\tau > 0$, independent of $s$, such that for any $\alpha \in \Sigma_\nu$, there exists $\zeta \in \alpha + (\tau Q_\nu \cap \Sigma_\nu)$ such that 
\begin{align}\label{e.1star}
\sup_{x'\in \Sigma_\nu}|a(x^\prime + \zeta, s) - a(x^\prime,s)| \leqslant \eta, 
\end{align}
and 
\begin{align}\label{e.2star}
\sup_{x'\in \Sigma_\nu}\left| \frac{h_\nu(x^\prime + \zeta, s)}{s} - \frac{h_\nu(x^\prime, s)}{s}\right| \leqslant \sqrt{\frac{\Theta}{\theta}}\eta. 
\end{align}  
\end{lemma}
\begin{proof}
We recall $\square_{\nu}:=Q_{\nu}\cap \Sigma_{\nu}$, and throughout the proof of the Lemma we use this choice of an $(N-1)-$dimensional unit cube $\blacksquare_{N-1}$ from the definition of Bohr-almost periodicity. As $a$ is $\mathbb{T}^N-$periodic and smooth, it admits an absolutely and uniformly convergent Fourier series 
\begin{align*}
a(x) = \sum_{k \in \mathbb{Z}^N} a_k e^{2\pi i k \cdot x}, \quad \quad x \in \RR^N.
\end{align*}
Upon a rotation, we may express $x=(x', x\cdot \nu)$ and $k = (k^\prime, k \cdot \nu)$, and take the sum over another countable family $\La^{N}$ which is isomorphic to $\mathbb{Z}^{N}$. We let $x_{N}=x \cdot \nu = s \in \RR\setminus \left\{0\right\}$ be fixed. Defining $b_k := a_k e^{2\pi i (k \cdot \nu) s}=a_{k}e^{2\pi i k_{N} x_{N}},$ we find that $|b_k| = |a_k|$, and we have
\begin{align*}
a(x^\prime, s) = \sum_{k' \in \La^{N-1}}\Big(\sum_{\substack{k\in \La^{N}:\\ k=(k', \cdot)}} b_k\Big)e^{2\pi i k^\prime \cdot x^\prime}, \quad \quad x^\prime \in \Sigma_\nu.
\end{align*}
Since the series on the right converges uniformly and absolutely, it follows that $a(\cdot,s)$ is Bohr almost periodic. In particular, for each $\eta > 0$ there exists $\tau > 0$ such that for every $\alpha \in \Sigma_\nu,$ there exists $\zeta \in \alpha + \tau\square_{\nu}$ satisfying 
\begin{align*}
\sup_{x^\prime \in \Sigma_\nu} |a(x^\prime + \zeta,s) - a(x^\prime,s)| \leqslant \eta,
\end{align*} 
and this proves \eqref{e.1star}. The property of almost periodicity is preserved under composition with uniformly continuous functions. As a consequence, for each $\eta > 0$ there exists $\tau > 0$ such that for all $\alpha \in \Sigma_\nu$, there exists $\zeta \in \al+\tau \square_{\nu}$ with
\begin{align}\label{e.sqrtaap}
\sup_{x^\prime \in \Sigma_\nu} |\sqrt{a(x^\prime + \zeta,s)} - \sqrt{a(x^\prime,s)}| \leqslant \eta.
\end{align} 
The proof of almost periodicity of $\frac{h_\nu(\cdot, s)}{s}$ follows a similar argument as the proof of Lemma \ref{l.per}. Fix $x \in \RR^N$ and, without loss of generality, assume that $x \cdot \nu > 0.$ Let $y \in \Sigma_\nu$ and $\gamma:[0,1] \to \RR^N$ be such that $\gamma(0) = x, \gamma(1) = y$ and $\int_0^1 \sqrt{a(\gamma(t))}|\dot{\gamma}(t)| \,dt = h_\nu(x).$ Let $\zeta\in \Sigma_{\nu}$, and we define $\tilde{\gamma}:[0,1] \to \RR^N$ by $\tilde{\gamma}(t) := \gamma(t) + \zeta.$ Note that as $\zeta \perp \nu$, we have $\tilde{\gamma}(1) \cdot \nu = \gamma(1) \cdot \nu + \zeta \cdot \nu = y \cdot \nu = 0,$ so that $\tilde{\gamma}(1) \in \Sigma_\nu.$ Moreover, $\tilde{\gamma}(0) = x + \zeta,$ and so
\begin{align*}
h_\nu(x + \zeta) &\leq d_{\sqrt{a}} (x+\zeta,y) \\
&\leqslant \int_0^1 \sqrt{a(\tilde\gamma(t))}|\dot{\tilde\gamma}(t)|\,dt\\
&= \int_0^1 \sqrt{a(\gamma(t))}|\dot{\gamma}(t)|\,dt + \int_0^1 \big( \sqrt{a(\gamma(t) + \zeta)} - \sqrt{a(\gamma(t))}\big)|\dot{\gamma}(t)|\,dt\\
&= h_{\nu}(x) + \int_0^1 \big( \sqrt{a(\gamma(t) + \zeta)} - \sqrt{a(\gamma(t))}\big)|\dot{\gamma}(t)|\,dt.
\end{align*}
Choose $\zeta\in \Sigma_{\nu}$ as in \eqref{e.sqrtaap}, and conclude that
%\begin{equation*}
%h_\nu(x + \zeta)\leq  h_\nu(x) + \int_0^1 |\sqrt{a(\gamma(t) + \zeta)} - \sqrt{a(\gamma(t))}||\dot{\gamma}(t)|\,dt. 
%\end{equation*}
%In particular, 
\begin{align}\label{e.3star}
|h_\nu(x+\zeta) - h_\nu(x)| \leqslant \eta \int_0^1 |\dot{\gamma}(t)|\,dt &\leqslant \frac{\eta}{\sqrt{\theta}} \int_0^1 \sqrt{a(\ga(t))}|\dot{\gamma}(t)|\,dt\notag \\
&\leqslant \frac{\eta}{\sqrt{\theta}}\sqrt{\Theta}|x\cdot\nu|, 
\end{align}
{where in the last inequality we have used the definition of $\ga(t)$ and its relation to $h_{\nu}(x)$, as well as \eqref{e.metriccom}.} The inequality \eqref{e.2star} now follows upon diving \eqref{e.3star} through by $|x \cdot \nu|$, and noting that $\zeta \cdot \nu = 0.$

\end{proof}
%The preceding Lemma shows that the function $h_\nu$ is Bohr almost periodic on slices parallel to $\Sigma_\nu.$ It follows, then, that $h_\nu \circ \mathcal{R}_\nu^T(y_1,\cdot)$ is Bohr almost periodic on $\Sigma_{e_1}$ for each $y_1 \in \RR$.

The next lemma is crucial for the proof of Theorem \ref{t.planarint}, and requires various properties of $h_{\nu}$ which we have previously established.   
\begin{lemma} \label{l.hompmp}
Fix $\nu \in \mathbb{S}^{N-1},$ and let $\{T_m\}_{m\in \mathbb{N}} \subseteq (0,\infty)$ with $T_m\to \infty$ as $m \to \infty.$ For $m \in \mathbb{N},$ consider the functions $k_{m}:\mathbb{R}^{N}\rightarrow \RR$ defined as
\begin{equation}\label{e.kmdef}
k_m(\cdot) := \frac{1}{T_m} h_\nu(T_m \,\cdot).
\end{equation}
There exists a constant $c(\nu) \in [\sqrt{\theta},\sqrt{\Theta}]$ and a subsequence of $\{T_m\}_{m\in \mathbb{N}}$ (which we do not relabel) such that for any compact set $K\subseteq \RR^{N}\setminus \Sigma_{\nu}$, and for every $\alpha > 0,$ there exists $M=M(\alpha, K) \in \mathbb{N}$ such that if $m \geqslant M$, then 
\begin{align} \label{e.convdstf}
\big| k_m (z) - c(\nu) z \cdot \nu\big| \leqslant \alpha |z \cdot \nu| \quad \text{for all {$z\in K$}.}
\end{align} 
\end{lemma}
\begin{proof}
%We recall that by \eqref{e.metriccom} and Lemma \ref{l.hlips}, $h_{\nu}$ is bounded from above by a homogeneous function and is Lipschitz continuous. The function $k_{m}$ is a rescaling of $h_{\nu}$ which preserves homogeneous upper bounds and gradients. This implies that
We show that $\left\{k_{m}\right\}_{m\in \mathbb{N}}$ is uniformly bounded and uniformly Lipschitz, from which we obtain local uniform convergence (up to a subsequence) in a strong (uniform) topology. We further use averaging associated to weak convergence arguments to identify the limit in a weak topology. Carrying out this program involves some ideas using polynomial approximation which might be of independent interest in this context. We break up the proof in several steps. 

\par \textbf{Step A:} We show that there exists a Lipschitz continuous function $k: \mathbb{R}^{N}\rightarrow \RR$ and a subsequence of $\left\{k_{m}\right\}$ (which we do not relabel) such that for any compact $K\subseteq \RR^{N}\setminus \Sigma_{\nu}$ and for every $\alpha > 0$, there exists $M=M(\alpha, K) \in \mathbb{N}$ such that, if $m \geqslant M$,
\begin{align} \label{e.ucon}
|k_m(z) - k(z)| \leqslant \alpha |z \cdot \nu|\quad\text{for all $z\in K$.}
\end{align}
As $\nu$ is fixed, we define $z_{N}:=z\cdot \nu$ and write $z=(z', z_{N})=(z', z\cdot\nu)$ throughout the rest of the proof.  
%Let $E$ denote a countable dense subset of $H_{\nu},$ for instance, points with rational coordinates. 
By \eqref{e.metriccom}, for $z\in \mathbb{R}^{N}\setminus \Sigma_{\nu}$ we have
\begin{equation}\label{e.kmbnd}
\left|\frac{k_{m}(z)}{z_{N}}\right|=\left|\frac{1}{T_{m}z_{N}}h_{\nu}(T_{m}z)\right|\leq \left|\frac{\sqrt{\Theta}T_{m}z_{N}}{T_{m}z_{N}}\right|= \sqrt{\Theta}.
\end{equation}
%As $\left\{\frac{k_{m}(z)}{z_{N}}\right\}$ is uniformly bounded, by a diagonalization argument, we may define a function $q: E\rightarrow \RR$ and extract a subsequence in $m$ (which we do not relabel), \jl{so that for every $z\in E$, 
%\begin{equation}\label{e.convE}
%\lim_{m\rightarrow \infty} \frac{k_m(z)}{z_N}=q(z)
%\end{equation}}
%\begin{equation}
%\sup_{z\in E} \left|\frac{k_m(z)}{z_N}-q(z)\right|\leq \al.
%\end{equation}
By Lemma \ref{l.hlips} and \eqref{h.nondeg}, $k_m$ is Lipschitz with 
\begin{align}\label{e.kmgrad}
\|\nabla k_m\|_{L^\infty} =\norm{\nabla h_{\nu}}_{L^{\infty}}\leqslant \sqrt{\Theta}. 
\end{align}
Combining \eqref{e.kmbnd} and \eqref{e.kmgrad}, we deduce that for a point of differentiability $z\in \RR^{N}\setminus \Sigma_{\nu}$, 
\begin{align}\label{e.aagrad}
\left|\nabla \left(\frac{k_{m}(z)}{z_{N}}\right)\right|&=\left|\nabla \left(\frac{k_{m}(z)}{z\cdot\nu}\right)\right|\notag\\
&= \left|\frac{\nabla k_{m}(z)(z\cdot\nu)-k_{m}(z)\nu}{(z\cdot\nu)^{2}}\right|\notag\\
&\leq \frac{2\sqrt{\Theta}}{|z\cdot\nu|}=\frac{2\sqrt{\Theta}}{|z_{N}|}.
\end{align}

In view of \eqref{e.kmbnd} and \eqref{e.aagrad}, the Arzel\`{a}-Ascoli theorem yields that there exist a subsequence of $\left\{k_{m}\right\}$ (not relabeled) and a continuous function $\tilde{q}: \RR^{N}\setminus \Sigma_{\nu}\rightarrow \RR$ such that, for every compact set $K\subseteq \RR^{N}\setminus \Sigma_{\nu}$, 
\begin{equation}\label{e.aabase}
\lim_{m\rightarrow \infty} \sup_{z\in K} \left|\frac{k_{m}(z)}{z_{N}}-\tilde{q}(z)\right|=0. 
\end{equation}
%where $\tilde{q}(z)$ satisfies the same uniform/equicontinuity bounds as $\frac{k_{m}(z)}{z_{N}}$. In particular, 
%\begin{equation*}
%\begin{cases}
%\norm{\tilde{q}(z)}_{L^{\infty}}\leq \sqrt{\Theta},\\
%|\nabla \tilde{q}(z)|\leq \frac{2\sqrt{\Theta}}{|z_{N}|}.
%\end{cases}
%\end{equation*}
%Note that we may define the function $\tilde{q}$ uniquely on all of $H_{\nu}$, by taking a compact exhaustion of $H_{\nu}$. 
Defining now 
\begin{equation*}
k(z):=\begin{cases}\tilde{q}(z)z_{N}&\text{for $z\in\RR^{N}\setminus \Sigma_{\nu},$}\\
0&\text{for $z\in \Sigma_{\nu}$,}
\end{cases}
\end{equation*}
we see that \eqref{e.ucon} follows from \eqref{e.aabase}.

\textbf{Step B:} Fix $R>1$. We argue that $h_{\nu}$ can be approximated on $\RR^{N-1}\times [-R,R]$ by a polynomial in the last variable. In particular, we will show that this polynomial belongs to the class
\begin{equation*}
\mathfrak{A}:=\left\{g(z', z_{N}):=\sum_{j=0}^{p}b_{j}(z')z_{N}^{j}: p\in \mathbb{N}, b_{j}\in AP(\RR^{N-1})\right\},
\end{equation*}
where $AP(\RR^{N-1})$ is the set of Bohr almost periodic functions in $\RR^{N-1}$. 
%We first show that there is a suitable polynomial approximation on $\{z \in \RR^N: |z_N| \leqslant R\}$. 
In what follows, we write $z \in \RR^N$ as $z = (z^\prime, z_N)$ with $z^\prime \in \Sigma_\nu \sim \RR^{N-1},$ and $|z_N| \leqslant R.$ Let $f:\RR^{N-1}\times [0,1] \to \RR$ be defined by 
\begin{align*}
f(z^\prime,z_N) := h_\nu\left(z^\prime, 2Rz_N-R\right). 
\end{align*}
%If $\nu\in \mathbb{Q}^{N}\cap \mathbb{S}^{N-1}$, we know that $f(\cdot\, z_{N})$ is $T_{0}\cu_{N}$ periodic in each of the directions $\left\{\nu_{N}\right\}_{i=1}^{N-1}$.  fWe recall from Lemma \ref{l.almper} that $z^\prime \mapsto \frac{h_\nu(z^\prime, z_N)}{z_{N}}$ is Bohr uniformly almost periodic for every $z_N \in \RR.$ 
Fix $z'\in \RR^{N-1}$ such that $(z^\prime, 0) \in \Sigma_\nu,$ and consider the functions $z_{N}\rightarrow f(z', z_{N})$. Throughout the rest of Step B, we allow $C=C(N, \theta, \Theta)$ in every step.
Define $\tilde{f}: \RR^{N-1}\times[0,1] \to \RR$, by 
\begin{align}\label{e.f*def}
\tilde{f}(z', z_{N}):&=f(z', z_{N})-f(z',0)-z_{N}\left(f(z', 1)-f(z',0)\right)\notag\\
&=h_{\nu}(z', 2Rz_{N}-R)-h_{\nu}(z', -R)-z_{N}(h_{\nu}(z',R)-h_{\nu}(z', -R)). 
\end{align}
We have that $\tilde{f}(z',0)=\tilde{f}(z', 1)=0$, and we can extend this function to all of $\RR^{N}$ by setting $\tilde{f}=0$ off of $\RR^{N-1}\times [0,1].$

We now proceed nearly identically to the proof of the Stone-Weierstrass Theorem (see  \cite[Page 160]{Rudin}). For $j \in \mathbb{N}_0$, we define the functions $g_j : \RR^N \to \RR$ as 
\begin{equation}\label{e.gjformula}
g_{j}(z', z_{N}):=C_{j}\int_{-1}^{1}\tilde{f}(z', z_{N}+t)(1-t^{2})^{j}\, dt,
\end{equation}
where $C_{j}$ are chosen so that 
\begin{equation}\label{e.int1}
\int_{-1}^{1}C_{j}(1-t^{2})^{j}\, dt=1, 
\end{equation}
and, as shown in \cite{Rudin},
\begin{equation}\label{e.cjbnd}
|C_{j}|\leq C\sqrt{j}.
\end{equation}
Since $\tilde{f}\equiv 0 $ in $\RR^{N-1} \times (\RR \backslash [0,1])$, we have that for any $z'\in \RR^{N-1}$ and $z_{N}\in [0,1]$, 
\begin{equation*}
g_{j}(z', z_{N})=C_{j}\int_{-z_{N}}^{1-z_{N}}\tilde{f}(z', z_{N}+t)(1-t^{2})^{j}\, dt=C_{j}\int_{0}^{1}\tilde{f}(z', t)(1-(t-z_{N})^{2})^{j}\, dt, 
\end{equation*}
which is a polynomial in $z_{N}$ with continuous coefficients depending on $z'$. Recall that by Lemma \ref{l.almper}, $\frac{h_{\nu}(\cdot, s)}{s}$ is Bohr almost periodic, uniformly, for all $s\neq 0$. In particular, this implies that $h_{\nu}(\cdot, s)$ is Bohr almost periodic (the case $s=0$ being trivial since $h_{\nu}(\cdot, 0)=0$). Note that for every $z_{N}\in [0,1]$, $\tilde{f}(\cdot, z_{N})$ defined by \eqref{e.f*def} is a linear combination of Bohr almost periodic functions, which is still Bohr almost periodic. We infer that $g_{j}(\cdot, z_{N})$, whose coefficients are given by integration in the $N$th variable of Bohr almost periodic functions (which does not affect the first $N-1$ variables), is Bohr almost periodic for every $z_N \in [0,1].$ 

%
% we conclude that $b_{j}\in C_\#(T_0\square_\nu)$, and thus, for every $\eta>0$, there exists $g^{\eta}\in \mathfrak{A}$, such 
%\begin{equation*}
%|h_\nu(z) - g^\eta(z)|\leq \eta \quad \text{for $z\in TQ_{\nu}$.}
%\end{equation*}

For $z\in \RR^{N-1}\times [0,1]$, we define 
\begin{equation*}
M(z)=M(z',z_{N}):=\max_{t\in [-1,1]} |\tilde{f}(z', z_{N}+t)|.
\end{equation*}

By \eqref{e.f*def} and the Lipschitz continuity of $h_{\nu}$ (Lemma \ref{l.hlips} and \eqref{h.nondeg}), we have
\begin{align}\label{e.mbound}
M(z)&= \max_{t\in [-1,1]}\large|h_{\nu}(z', 2R(z_N+t)-R)-h_{\nu}(z', R)\notag\\
&\quad -(z_{N}+t)(h_{\nu}(z', R)-h_{\nu}(z', -R))\large|\notag\\
&\leq  \max_{t\in [-1,1]}\sqrt{\Theta}\left|2R(z_{N}+t-1)\right|+\left|z_{N}+t\right| 2\sqrt{\Theta}R\notag\\
&= CR\left(z_{N}+1\right).
\end{align}

Fix $\eta>0$. Note that by Lemma \ref{l.hlips}, $f$ (and hence $\tilde{f}$) is Lipschitz continuous and, in particular, $f$ (and hence $\tilde{f}$) is uniformly continuous. Hence, choose $\delta\in(0,1)$ such that for any $x,y\in \RR^N$ with $|x-y|\leq \delta$, we have $|\tilde{f}(x)-\tilde{f}(y)|\leq \frac{\eta}{2}$.

%Fix $z^{0}_{N}\in [-T_{0},0)\cup (0,T_{0}]$, and set $\ve:=\eta \frac{|z^{0}_{N}|}{2}$. For $z_{N}\in [z^{0}_{N}-\ve, z^{0}_{N}+\ve]$, and let $M:=\max_{z_{N}\in [z^{0}_{N}-\ve, z^{0}_{N}+\ve]}|h_{\nu}(x',z_{N})|$, and by \eqref{e.metriccom}, $M\leq C\max_{z_{N}\in [z^{0}_{N}-\ve, z^{0}_{N}+\ve]} |z_{N}|$. 
%
%By uniform continuity of $h_{\nu}$, choose $\delta>0$ such that whenever $x,y\in TQ_{\nu}$ and $|x-y|\leq \delta$, then $|h_{\nu}(x)-h_{\nu}(y)|\leq \frac{\ve}{2}$. 
%
By \eqref{e.gjformula}, \eqref{e.int1}, the uniform continuity of $\tilde{f}$, \eqref{e.mbound}, and \eqref{e.cjbnd}, we find that at $z\in \RR^{N-1}\times [0,1]$, 
\begin{align*}
|\tilde{f}(z) - g_{j}(z)|&=\left|C_{j}\int_{-1}^{1}\left(\tilde{f}(z', z_{N})-\tilde{f}(z', z_{N}+t)\right)(1-t^{2})^{j}\, dt\right|\\
&\leq C_{j}\int_{(-1,1)\setminus (-\delta, \delta)}\left|\tilde{f}(z', z_{N}+t)-\tilde{f}(z', z_{N})\right|(1-t^{2})^{j}\, dt\\
&\quad +C_{j}\int_{-\delta}^{\delta}\left|\tilde{f}(z', z_{N}+t)-\tilde{f}(z', z_{N})\right|\left(1-t^{2}\right)^{j}\, dt\\
&\leq  C_{j}\int_{(-1,1)\setminus (-\delta, \delta)}2M(z)(1-t^{2})^{j}\, dt+C_{j}\int_{-\delta}^{\delta}\frac{\eta}{2}(1-t^{2})^{j}\, dt\\
&\leq CM(z)\sqrt{j}(1-\delta^{2})^{j}+\frac{\eta}{2}\\
&\leq CR\sqrt{j}(1-\delta^{2})^{j}\left(z_{N}+1\right)+\frac{\eta}{2}.
\end{align*}
%\begin{align*}
%|h_\nu(z) - g^\eta_0(z)| \leqslant \eta |z_N|, \quad z \in \RR^N.  
%\end{align*}
Taking $j$ sufficiently large, and using the fact that $R>1$, and that $$\lim_{j \to \infty} \sqrt{j}(1 - \delta^2)^j = 0,$$ we can find $g^{\eta}\in \mathfrak{A}$ such that, for all $z\in \RR^{N-1}\times [0,1]$, 
\begin{equation*}
    |\tilde{f}(z) - g^{\eta}(z)|\leq R\eta(z_{N}+1).
\end{equation*}
By \eqref{e.f*def}, this implies that for all $z\in \RR^{N-1}\times [0,1]$,
%%%%%%%%%%%%%%%%%%%%%%%%%%%%%%%%%%%%%%%%%%%%%%%%%%%%%%%%%%%%%%%%%%%%%%%%%%%%%%%%%%%%%%%%%%%%%%%%%%%%%%%%%%%%%%%%%% End Stone-Weierstrass and new proof using Heat Kernel begins
\begin{multline*}
\left|h_{\nu}(z', 2Rz_{N}-R)-h_{\nu}(z', -R)-z_{N}(h_{\nu}(z',R)-h_{\nu}(z', -R))-g^{\eta}(z)\right|\\
\leq R\eta(z_{N}+1). 
\end{multline*}
Combining the constant term and the linear term in $z_{N}$ into the polynomial $g^{\eta}(z),$ we deduce that there is a polynomial $g^{\eta,R}\in \mathfrak{A}$ such that 
\begin{equation*}
\left|h_{\nu}(z', 2Rz_{N}-R)-g^{\eta, R}(z)\right|\leq R\eta(z_{N}+1)\quad \text{for all $z\in \RR^{N-1}\times [0,1]$.}
\end{equation*}
By the affine transformation in the $N$th variable $(z_{N}\mapsto \frac{1}{2R}(z_{N}+R))$, we obtain a polynomial $\tilde{g}^{\eta,R}\in \mathfrak{A}$ such that
%\jlcomment{This error term could be taken more generously with absolute values, if it helps in further computations} 
\begin{equation}\label{e.polyapprox}
\left|h_{\nu}\left(z\right)-\tilde{g}^{\eta, R}(z)\right|\leq \frac{\eta}{2}(z_{N}+3R) \quad\text{for all $z\in \RR^{N-1}\times [-R,R]$.}
\end{equation}

%We note now that we may repeat the above argument in the direction $-\nu$. Indeed, in this case we find that there exists a polynomial $\bar{g}^{\eta}(z)\in \mathfrak{A}$ so that 
%\begin{equation*}
%\left|h_{-\nu}\left(z\right)-\bar{g}^{\eta}(z)\right|\leq \eta(|z_{N}|+R), \quad\text{for all $z\in \RR^{N-1}\times [-R,0]$.}
%\end{equation*}
%Since $h_{-\nu}(z)=-h_{\nu}(z)$, we may thus argue that there is a $\hat{g}^{\eta}$, which is the concatonation of the two polynomials $\tilde{g}$ and $\bar{g}$ so that 
%\begin{equation} \label{e.polyapprox}
%|h_{\nu}(z)-\hat{g}^{\eta}(z)|\leq \eta(|z_{N}|+R)\quad \text{for every $z\in \RR^{N-1}\times [-R,R]$.}
%\end{equation}
%\jlcomment{This is not ok. $h_{\nu}$ is not odd in the last variable. Instead we have that we can repeat this argument for $h_{-\nu}$ to get a polynomial for $h_{-\nu}$ and then using the oddness of $h_{-\nu}=-h_{\nu}$ we need to argue}
%\jl{As $h_{\nu}$ is odd with respect to $z_{N}$, we may without loss of generality assume that $\tilde{g}^{\eta}(z)$ is an odd polynomial and extend the estimate to argue that there is a polynomial $\tilde{g}^{\eta}\in \mathfrak{A}$ such that 
%\begin{equation} \label{e.polyapprox}
%|h_{\nu}(z)-\tilde{g}^{\eta}(z)|\leq \eta(|z_{N}|+R)\quad \text{for every $z\in \RR^{N-1}\times [-R,R]$.} 
%\end{equation}}

\textbf{Step C.} In this step, we argue that the linear growth of $h_\nu$ at infinity implies that we may restrict to polynomial approximations that are linear in $z_N.$ Our strategy to make this reduction will be to obtain a single ``infinite polynomial'' which pointwise approximates the bounded function $z \in \RR^N \backslash \Sigma_\nu \mapsto \frac{h_\nu(z)}{z_N}$ (rather than on sets of the form $\RR^{N-1} \times [-R,R],$ in which the coefficients of the polynomial approximation might depend on $R$). 
\par To this end, let $\{\tau_m\}_m$ denote an increasing sequence of positive numbers with $\tau_m \to \infty$ as $m \to \infty.$ We define 
\begin{equation*}
\mathfrak{A}^{-1}:=\left\{g(z', z_{N}):=\sum_{j=-1}^{p}\tilde{b}_{j}(z')z_{N}^{j}: p\in \mathbb{N}, \tilde{b}_{j}\in AP(\RR^{N-1})\right\}.
\end{equation*}
Fix $\eta > 0.$ On the compact interval $[-\tau_{m}, -\frac{1}{\tau_{m}}]\cup[\frac{1}{\tau_m}, \tau_m],$ let $p_m \in \mathfrak{A}^{-1} $ as in Step B be chosen such that 
\begin{align} \label{e.24*}
\left| \frac{h_\nu(z^\prime, z_N)}{z_N} - p_m(z) \right| \leqslant \frac{\eta}{2^m} \quad \quad z \in \RR^{N-1} \times \left[-\tau_{m}, -\frac{1}{\tau_{m}}\right]\bigcup \left[\frac{1}{\tau_m}, \tau_m\right].
\end{align}
We set $q_0 := p_1, $ and $q_m := p_{m+1} - p_m,$ for $m \in \mathbb{N}.$ Then 
\begin{align*}
p_{m+1}(z) = \sum_{n=0}^m q_n(z),
\end{align*}
and thus, pointwise, we have 
\begin{align} \label{e.singleseries}
\frac{h_\nu(z)}{z_N} = \sum_{n=0}^\infty q_n(z) + O(\eta), \quad \quad z \in \RR^N-1 \times \left(\RR\setminus 0\right).
\end{align}
\par Next, we show that on sets of the form $\RR^{N-1} \times [c,d], 0 < c < d < \infty,$ the series in \eqref{e.singleseries} converges uniformly and absolutely. Indeed, if $[c,d] \subset (0,\infty)$, then for all $m$ sufficiently large and for all $z \in \RR^{N-1} \times [c,d],$ we have by \eqref{e.24*} 
\begin{align*}
\left| \frac{h_\nu(z)}{z_N} - p_m (z) \right| \leqslant \frac{\eta}{2^m},
\end{align*}
and so,
\begin{align*}
|q_m(z)|\leqslant \left|p_{m+1}(z)-\frac{h_\nu(z)}{z_N}\right|+\left|\frac{h_\nu(z)}{z_N}-p_{m}(z)\right|\leqslant \frac{\eta}{2^m} + \frac{\eta}{2^{m+1}}.
\end{align*}
It follows that there exists $M \in \mathbb{N}$ large so that 
\begin{align*}
\sum_{m=M}^\infty |q_m(z)| \leqslant 2 \sum_{m=M}^\infty \frac{\eta}{2^m} \leqslant C \eta,
\end{align*}
and thus the series $\sum_{m=0}^\infty q_m(z)$ converges uniformly and absolutely to $z \mapsto \frac{h_\nu(z)}{z_N}$ on the set $\RR^{N-1} \times [c,d].$ As $\{p_m\}_{m} \subset \mathfrak{A}^{-1},$ we have also $\{q_m\}_m \subset \mathfrak{A}^{-1}.$ Collecting powers of $z_N$ and rearranging, using the absolute summability we may rewrite the series in \eqref{e.singleseries} as 
\begin{align*}
\frac{h_\nu(z)}{z_N} = \sum_{j=-1}^\infty \tilde{b}_j(z^\prime)z_N^j+O(\eta),
\end{align*}
where the coefficients $\tilde{b}_j$ are Bohr almost periodic, and therefore, bounded. Testing this with $z = T_m \zeta,$ for $\zeta \in \overline{Q}_\nu$ with $\zeta_N = 1,$ we get 
\begin{align*}
 \frac{h_\nu(T_m \zeta)}{T_m } = \sum_{j=-1}^\infty \tilde{b}_j(T_m \zeta^\prime) T_m^j  + O(\eta).
\end{align*}
We claim that each of the terms  
\begin{align} \label{e.boundbj}
\sup_m |\tilde{b}_j(T_m \zeta^\prime)T_m^j| \leqslant C_j,
\end{align}
 for some constant $C_j >0,$ for every $j \geqslant 1.$ To see this, as the infinite series above is convergent for each $m,$  there exists $J_0(m)$ such that for all $j \geqslant J_0(m),$ we have $|\tilde{b}_j(T_m \zeta^\prime)T_m^j| \leqslant 1,$ and $\left|\sum_{j \geqslant J_0(m)} \tilde{b}_j(T_m \zeta^\prime)T_m^j\right| \leqslant 1.$ It follows by the triangle inequality that 
\begin{align} \label{e.front}
\sup_m\left| \sum_{j \leqslant J_0(m)} \tilde{b}_j(T_m \zeta^\prime)T_m^j\right| \leqslant C.
\end{align}
As the coefficients $\tilde{b}_j$ are bounded, by Bohr almost periodicity, \eqref{e.front} implies the claim in \eqref{e.boundbj}. 
\par  Having proven the claim in \eqref{e.boundbj}, it follows that the $\tilde{b}_j$ are Bohr almost periodic functions that decay at infinity. From this, we claim that each of the $\tilde{b}_j$ must vanish identically. Indeed, for any $j \geqslant 1,$ since 
$|\tilde{b}_j(T_m \zeta^\prime)| \leqslant \frac{C_j}{T_m^j},$
squaring, and integrating over the cube $\square_\nu,$ and then sending $m \to \infty,$ we find that 
\begin{align*}
\mu(\tilde{b}_j^2) = \lim_{m \to \infty} \int_{\square_\nu} |\tilde{b}_j(T_m \zeta^\prime)|^2 \,d\zeta^\prime  = 0,
\end{align*}
which, together with \eqref{l.fsnought}, implies that $\tilde{b}_j \equiv 0$ for all $j\geq 1$. In particular, we may reduce \eqref{e.polyapprox} to a linear approximation, so that for every $\eta > 0$ and $R > 1$ there exist Bohr almost periodic functions $b^{\eta}_0, b^{\eta}_1$ with 
\begin{align} \label{e.unifapp}
|h_\nu(z) - b_0^\eta(z^\prime) - b_1^\eta(z^\prime) z_N| \leqslant  \eta (|z_N| + R), \quad \quad \mbox{ for all } z \in \RR^{N-1} \times [-R,R].
\end{align}

%}
\par Recalling \eqref{e.kmdef}, by \eqref{e.unifapp} we have for $z\in \RR^{N-1}\times [-R,R]$, 
\begin{align} \label{e.poly}
\left|k_m(z) - \frac{1}{T_m}b^\eta_0(T_mz^\prime)- b^\eta_1(T_m z^\prime) z_{N}\right| &= \frac{1}{T_m}\left| h_\nu(T_m z) - b^\eta_0(T_m z)- b^\eta_1(T_m z^\prime) T_m z_N\right|\notag\\
&\leqslant \frac{\eta }{T_m}(|T_m z_N|+R) =\eta |z_N|+\frac{\eta R}{T_m}.  
\end{align}
With the choice of  $z = (z^\prime, 0)\in\Sigma_\nu$ in \eqref{e.unifapp} and \eqref{e.poly}, respectively, we conclude that 
\begin{equation}\label{e.b0bound}
|b^{\eta}_{0}(z^{\prime})|\leq \eta R\quad\text{and}\quad \frac{1}{T_m}|b^\eta_0(T_m z^\prime)|\leq \frac{\eta R}{T_m}\quad\text{for all $z'\in \RR^{N-1}$.}
\end{equation}
%For $b^{\eta}_{1}$, it follows from the Riemann-Lebesgue lemma that $\left\{b^\eta_1(T_m \,\cdot)\right\}_{m\in \mathbb{N}}$ converges weakly in $L^2(T_0\square_\nu)$ to the constant $\overline{b}_\eta := \fint_{T_0\square_\nu} b^\eta_1(z^\prime)\,dz^\prime.$ Fubini's theorem then shows that $z \mapsto b^\eta_1(T_m z^\prime)z_N$ converges weakly in $L^2(T_0 Q_\nu)$ to $\overline{b}_\eta z_N.$ Combining this with the uniform convergence of $\left\{k_m\right\}_{m\in \mathbb{N}}$ from Step A, \eqref{e.poly}, and Cauchy-Schwarz, we find for any $\rho \in L^2(T_0Q_\nu),$
%\begin{equation*}\begin{aligned}
%\int_{T_0 Q_\nu} k(z) \rho(z) \,dz & = \lim_{m \to \infty}\int_{T_0Q_\nu} k_m (z) \rho(z)\,dz\\ 
%&\leqslant \limsup_{m \to \infty} \int_{T_0 Q_\nu} \left(k_m (z) - \frac{1}{T_m}b^{\eta}_{0}(T_mz^\prime)-b^\eta_1(T_mz^\prime) z_N\right)\rho(z) \, dz \\ & \quad + \lim_{m \to \infty} \int_{T_0Q_\nu} \frac{1}{T_m}b^\eta_0(T_mz^\prime) \rho(z)\,dz+\lim_{m \to \infty} \int_{T_0Q_\nu} b^\eta_1(T_mz^\prime) z_N \rho(z)\,dz \\ &\quad \leqslant C\eta \|\rho\|_{L^2(T_0Q_\nu)} +\int_{T_0Q_\nu}  \overline{b}_\eta z_N \rho(z)\,dz, 
%\end{aligned}
%\end{equation*}
%where we used \eqref{e.b0bound}, and so 
%\begin{align*}
%\int_{T_0Q_\nu} (k(z) - \overline{b}_\eta z_N)\rho(z)\,dz \leqslant O(\eta).
%\end{align*}
%A similar argument yields the opposite inequality.
 From \eqref{e.metriccom}, \eqref{e.unifapp}, and \eqref{e.b0bound}, we see that for $z_{N}\in (0, R]$, 
\begin{equation*}
b^\eta_1(z^\prime)=\frac{1}{z_{N}}b^\eta_1(z^\prime)z_{N}\leq \frac{1}{z_{N}}\left[h_{\nu}(z)-b^{\eta}_{0}(z^\prime)+\eta(|z_{N}|+R)\right]\leq \sqrt{\Theta}+\eta\left(1+\frac{2R}{z_{N}}\right),
\end{equation*}
and 
\begin{equation*}
b^\eta_1(z^\prime)=\frac{1}{z_{N}}b^\eta_1(z^\prime)z_{N}\geq  \frac{1}{z_{N}}\left[h_{\nu}(z)-b^{\eta}_{0}(z^\prime)-\eta(|z_{N}|+R)\right]\geq \sqrt{\theta}-\eta\left(1+\frac{2R}{z_{N}}\right). 
\end{equation*}
Taking $z_{N}=R$, we infer that 
\begin{equation} \label{e.b1bounds}
\sqrt{\theta} -3\eta \leqslant b^\eta_1(z^\prime) \leqslant \sqrt{\Theta} + 3\eta. 
\end{equation}
Since $b^\eta_1$ is Bohr uniformly almost periodic, it follows that the limit 
\begin{align} \label{e.b1mean}
\overline{b}^\eta_1 :=\mu(b^{\eta}_{1})= \lim_{T \to \infty} \frac{1}{T^{N-1}} \int_{T \square_\nu} b^\eta_1( z^\prime)\,dz^\prime = \lim_{T \to \infty} \int_{\square_\nu} b^\eta_1(Tz^\prime) \,dz^\prime
\end{align}
exists. From \eqref{e.b1bounds}, it follows that $\sqrt{\theta} - 3\eta \leqslant \overline{b}^\eta_1 \leqslant \sqrt{\Theta} + 3\eta. $ This implies that up to a subsequence (not relabeled),
\begin{align}
\label{e.meanconv}
\overline{b}_1^{\eta} \xrightarrow[\eta\to 0]{}c(\nu) 
\end{align}
for some $c(\nu) \in [\sqrt{\theta},\sqrt{\Theta}]$. 
%\jlcomment{we may be able to remove this. revisit after rereading the proof} \jl{From \eqref{e.b1mean}, there exists $T^*(\alpha) \gg 1$ sufficiently large such that when $T \geqslant T^*(\alpha),$
%\begin{align} \label{e.b1meanT}
%\left|\overline{b}^\eta_1 -\int_{\square} b^\eta_1(T z^\prime)\,dz^\prime \right| \leqslant \alpha. 
%\end{align}}
Fix $\al>0$. As the functions $b^\eta_1(T_m \cdot)$ Bohr two-scale converge to $\overline{b}^\eta_1$ as $m \to \infty$(see Remark \ref{r.mean}), for any fixed $T > 1,$ using Definition \ref{d.2sc} in the domain $\Omega = T\square_\nu,$ this entails that for every test function $\psi : T\square_\nu \times \RR^N \to \RR,$ that is continuous in the first variable and Bohr almost periodic in the second variable, we have 
\begin{align*}
\lim_{m \to \infty} \int_{T\square_\nu} b^\eta_1(T_m z^\prime) \psi(z^\prime, T_m z^\prime)\,dz^\prime = \int_{T\square_\nu} \mu \big(\overline{b}^\eta_1  \psi(z^\prime, \cdot)\big) \,dz^\prime = \overline{b}^\eta_1\int_{T\square_\nu} \mu(\psi(z^\prime, \cdot)) \,dz^\prime . 
\end{align*} 
In what follows, we use this framework with the choice of test functions given by $\psi(z^\prime, w) := e^{-i \chi \cdot z^\prime},$ for $\chi\in \RR^{N-1}$, which are independent of $w.$ For such a choice, 
\begin{align}
\label{e.testfct}
\lim_{m \to \infty} \int_{T\square_\nu} b^\eta_1(T_m z^\prime) e^{-i \chi \cdot z^\prime} \,dz^\prime = \overline{b}^\eta_1 \int_{T\square_\nu} e^{-i \chi \cdot z^\prime} \,dz^\prime.
\end{align}

%and similarly, we have for $z_{N}<0$, 
%\begin{equation*}
%b^\eta_1(z^\prime)=\frac{1}{z_{N}}b^\eta_1(z^\prime)z_{N}\geq -\frac{1}{|z_{N}|}\left(|h_{\nu}(z)|+|b^{\eta}_{0}(z^\prime)|-\eta(|z_{N}|+1)\right)\geq -\sqrt{\Theta}+\eta\left(1+\frac{2}{|z_{N}|}\right)
%\end{equation*}
%and 
%\begin{equation*}
%b^\eta_1(z^\prime)=\frac{1}{z_{N}}b^\eta_1(z^\prime)z_{N}\leq  -\frac{1}{|z_{N}|}\left(|h_{\nu}(z)|-|b^{\eta}_{0}(z^\prime)|-\eta(z_{N}+1)\right)\leq -\sqrt{\theta}-\eta\left(1+\frac{2}{z_{N}}\right). 
%\end{equation*}
%Supremizing over $g$ with $\|g\|_{L^2(T_0Q_\nu)} \leqslant 1,$ and arguing symmetrically for the opposite inequality, we get 
%\begin{align*}
%|k(z) - \overline{b}_\eta z_N| \leqslant C(T_0) \eta. 
%\end{align*}

\textbf{Step D.} To prove \eqref{e.convdstf}, in view of Step A it remains to show that $k(z) = c(\nu) z \cdot \nu$ for all $z\in \RR^N$. We note that this is immediate for all $z\in \Sigma_{\nu}$, by definition of $k(z)$. We recall the function $z \mapsto \tilde{q}(z) = \frac{k(z)}{z_N}$ from Step A, and notice that being the locally uniform limit of Bohr almost periodic functions, $\tilde{q}(\cdot,z_N)$ is Bohr almost periodic. Fix $R > 0$ and also fix $|z_N|\leq R, z_N \neq 0.$ 
% In particular, 
%\begin{align*}
%\tilde{q}(z^\prime, z_N) = \sum_{\lambda \in \Lambda} d_\lambda(z_N) e^{i \lambda \cdot z^\prime},
%\end{align*}
%with $\Lambda \subseteq \Sigma_\nu$ at most countable, and $\sum_{\lambda \in \Lambda} |d_\lambda|^2 < \infty.$ 
Our strategy is to show that for all $\chi \in \RR^{N-1},$ we have
\begin{align} \label{e.orthog}
\mu\left((\tilde{q}(\cdot,z_N) - c(\nu))e^{-i \chi \cdot (\cdot)} \right) = 0,
\end{align} 
where, as in \eqref{e.meanvalue}, $\mu$ denotes the mean value of the almost periodic argument. If we can verify \eqref{e.orthog}, then by \eqref{l.fsnought} we deduce that $\tilde{q}(z^\prime, z_N) \equiv c(\nu)$. As $z_N$ is arbitrary on $[-R,R]\setminus \left\{0\right\}$, we conclude that $k(z)=c(\nu)z\cdot \nu$ for all $z\in \RR^{N-1}\times [-R,R]$. 

Let $\al>0$, and for $T>1$ fixed let $K:=T\square_\nu\times \left\{z_{N}\right\}$ be a compact subset of $\left\{z\cdot \nu>0\right\}$, and let $M=M(\al, K)$ be as in \eqref{e.ucon}. Let $m_{0}\geq M$ be such that for all $m\geq m _{0}$, in view of \eqref{e.testfct}, 
\begin{equation}\label{e.avg2}
\left|\int_{T\square_\nu} b^{\eta}_{1}(T_{m}z)e^{-i \chi\cdot z'}\, dz'-\overline{b}^{\eta}_{1}\int_{T\square_\nu}e^{-i\chi\cdot z'}\, dz'\right|<\al.
\end{equation}
For $m\geq m_{0}$, by \eqref{e.ucon}, \eqref{e.poly}, \eqref{e.b0bound}, and \eqref{e.avg2}, we have

%$ $m \geqslant M(\alpha)$ be as in Step A, and let $m \geqslant M(\alpha).$ Upon perhaps increasing $M(\alpha),$ we may also assume that for all $m \geqslant M(\alpha),$ the difference between the two sides of \eqref{e.testfct} is no more than $\alpha,$ in absolute value. Then, for any \textbf{fixed} $T \geqslant T^*(\alpha) > 1,$ as $T_m T \geqslant T_m,$ we use  \eqref{e.poly}, \eqref{e.testfct} to find
\begin{equation*} 
\begin{aligned}
&\left| \frac{1}{T^{N-1}}\int_{T\square_\nu}\left( \tilde{q}(z^\prime, z_N) - c(\nu)\right)e^{-i \chi \cdot z^\prime} \,dz^\prime\right| \\ & \leqslant 
\left| \frac{1}{T^{N-1}} \int_{T\square_\nu} \left( \frac{k(z^\prime,z_N)}{z_{N}} - \frac{k_m(z)}{z_N} \right) e^{-i \chi \cdot z^\prime} \,dz^\prime \right|\\& \quad + \left| \frac{1}{T^{N-1}} \int_{T\square_\nu} \left( \frac{k_m(z)}{z_N} - \frac{1}{T_m} \frac{b^\eta_0(T_m z^\prime)}{z_N} - b_1^\eta (T_m z^\prime)   \right)e^{-i \chi \cdot z^\prime}\,dz^\prime \right|\\
& \quad + \left|\frac{1}{T^{N-1}}\int_{T\square_\nu} \frac{1}{T_m} \frac{b^\eta_0(T_m z^\prime)}{z_N}e^{-i \chi \cdot z^\prime} \,dz^\prime \right| + \left|\frac{1}{T^{N-1}}\int_{T\square_\nu} \left( b_1^\eta(T_m z^\prime) - \overline{b}_1^\eta \right)e^{-i \chi \cdot z^\prime}\,dz^\prime  \right|\\
&\quad + |\overline{b}^\eta_1 - c(\nu)|\\
&\leqslant \alpha + \frac{\eta}{T_m} + \frac{2\eta R}{T_m |z_N|} +  \left|\frac{1}{T^{N-1}}\int_{T\square_\nu} \left( b_1^\eta(T_m z^\prime) - \overline{b}_1^\eta \right)e^{-i \chi \cdot z^\prime}\,dz^\prime  \right| + |\overline{b}^\eta_1 - c(\nu)|\\
&= \alpha + C\left( \frac{\eta}{T_m} + \frac{\eta R}{T_m |z_N|} \right) +  \frac{\alpha}{T^{N-1}} + |\overline{b}^\eta_1 - c(\nu)| .
\end{aligned}
\end{equation*}
We first send $m \to \infty,$ so that $T_m \to \infty.$ Then letting $T \to \infty,$ we obtain that 
\begin{align*}
\Big|\mu\left(\big(\tilde{q}(\cdot,z_N) - c(\nu)\big)e^{-i \chi \cdot (\cdot)} \right) \Big|\leqslant  \alpha + |\overline{b}^\eta_1 - c(\nu)|.
\end{align*} 
Sending $\eta, \alpha \to 0$ completes the proof of \eqref{e.orthog}, where we used \eqref{e.meanconv}. 

\textbf{Step E.} The foregoing argument shows that $k(z)=c(\nu)z\cdot \nu$ for all $z \in (\RR^{N-1}  \times [-R,R]).$ As $R$ is arbitrary, and $k$ from Step A is defined in $\RR^N$ (i.e., independently of any truncation $R$), we conclude that $ c(\nu)$ is independent of $R$ and 
\begin{align*}
k(z) = c(\nu) z\cdot \nu. 
\end{align*}
Thus, for all $K\subseteq \RR^{N}\setminus \Sigma_{\nu}$ compact, 
\begin{equation*}
\lim_{m\rightarrow \infty} \sup_{z\in K} \left|\frac{k_{m}(z)}{z_{N}}-c(\nu)\right|=0, 
\end{equation*} 
and this implies \eqref{e.convdstf}. 

\end{proof}
For notational convenience, we define $H: \RR^{N}\times \RR^{N}\rightarrow \RR$ by
\begin{equation*}
    H(p, y):=\frac{1}{\sqrt{a(y)}}|p|.
\end{equation*}
With this notation, \eqref{e.planmet2} yields that, in the viscosity sense,
\begin{equation*}
\begin{cases}
    H(\nabla k_{m},T_{m}x)=1&\text{in $\mathcal{H}_{\nu}$,}\\
  k_{m}(x)=0&\text{on $\Sigma_\nu$.}
    \end{cases}
\end{equation*}
We also point out that $H(\cdot, y)$ is uniformly Lipschitz continuous. Indeed, by \eqref{h.nondeg}, we have 
\begin{equation}\label{e.hlip}
|H(p,y)-H(q,y)|= \frac{1}{\sqrt{a(y)}}|p-q|\leq \frac{1}{\sqrt{\theta}}|p-q|.
\end{equation}
%We note that the function $k(x):=c(\nu)(x\cdot \nu)$ solves, in the viscosity sense, 
%\begin{equation*}
 %  \frac{1}{c(\nu)}|\nabla k|=1\quad\text{in $\mathcal{H}_{\nu}$.} 
%\end{equation*}

Throughout the rest of the paper, we take all equalities and inequalities of PDEs to be in the viscosity sense, and refer the reader to \cite{users, primer} for an overview of viscosity solutions.  

We next present a comparison principle which is specifically tailored for the proof of Theorem \ref{t.planarint}. A more general version of this result is stated in \cite[Lemma 3.3]{FS} without proof (although the proof essentially follows the same lines as \cite[Lemma 3.1]{ASlevel}.) For completeness, we provide a self-contained proof of the result we need here:
\begin{lemma}\label{l.compv}
Let $\eta>0$, and let $u,v\in C(\RR^{N})$ satisfy
\begin{equation}\label{e.hjorder}
\begin{cases}
    H(\nu+\nabla u, y)<H(\nu+\nabla v, y) -\frac{\eta}{\sqrt{\theta}}&\text{in $\mathcal{H}_{\nu}$},\\
    u(y)\leq v(y)&\text{on $\Sigma_{\nu}$,}
    \end{cases}
\end{equation}
 with 
\begin{equation}\label{e.liminfc}
    \liminf_{|y|\to \infty} \frac{v(y)-u(y)}{|y|}\geq -\eta.
\end{equation}
Then 
\begin{equation*}
    u(y)\leq v(y)\quad\text{in $\mathcal{H}_{\nu}$.}
\end{equation*}

\end{lemma}

\begin{proof}
Due to the strict inequality in \eqref{e.hjorder}, there exists $\ve>0$ so that 
\begin{equation}\label{e.hjorderep}
    H(\nu+\nabla u, y)\leq H(\nu+\nabla v, y) -\frac{\eta+\ve}{\sqrt{\theta}}\quad \text{in $\mathcal{H}_{\nu}$}.
\end{equation}
For each $R>1$, we define
\begin{equation*}
  \psi_{R}(y):=(R^2+|y|^{2})^{1/2}-R\quad\text{and}\quad v^{R}(y):=v(y)+(\eta+\ve) \psi_{R}(y).  
\end{equation*}
Notice that 
\begin{equation}\label{e.psiRfacts}
    \norm{\nabla \psi_{R}}_{L^\infty(\RR^{N})}\leq 1\quad\text{and}\quad \lim_{|y|\to \infty} |y|^{-1}\psi_{R}(y)=1.
\end{equation} 
Owing to \eqref{e.hlip}, \eqref{e.hjorderep}, and \eqref{e.psiRfacts} we obtain 
\begin{align*}
    H(\nu+\nabla v^{R}, y)&\geq H(\nu+\nabla v,y)-\frac{(\eta+\ve)}{\sqrt{\theta}}\norm{\nabla \psi_{R}}_{L^{\infty}(\RR^{N})}\\
    &=H(\nu+\nabla v,y)-\frac{(\eta+\ve)}{\sqrt{\theta}} \geq H(\nu+\nabla u,y)\quad\text{in $\mathcal{H}_{\nu}$.}
\end{align*}

Moreover, since $v^{R}\geq v$, we have 
\begin{equation}\label{e.bndorder}
    u(y)\leq v^{R}(y)\quad\text{on $\Sigma_{\nu}$}, 
\end{equation}
and by \eqref{e.liminfc} and superadditivity of $\liminf$, we find 
\begin{align*}
\liminf_{|y|\to \infty} \frac{v^{R}(y)-u(y)}{|y|}&=\liminf_{|y|\to \infty}\frac{v(y)+(\eta+\ve)\psi^{R}(y)-u(y)}{|y|}\\
&\geq\liminf_{|y|\to \infty}\frac{v(y)-u(y)}{|y|}+(\eta+\ve)\lim_{|y|\to\infty} \frac{ \psi_{R}(y)}{|y|}\\
&\geq -\eta+(\eta+\ve) = \ve>0. 
\end{align*}

This implies that for $|y|$ sufficiently large, the numerator must be nonnegative, and thus there is a large ball $B_{M}$ so that 
\begin{equation}\label{e.eventualorder}
    u(y)\leq v^{R}(y)\quad\text{on $\overline{\mathcal{H}_{\nu}}\setminus B_{M}$}. 
\end{equation}

We may now apply the comparison principle for the Dirichlet problem of stationary Hamilton-Jacobi equations on bounded domains \cite[Theorem 3.3]{users} to conclude that by \eqref{e.eventualorder}, the comparison principle, and \eqref{e.bndorder}, 
\begin{align*}
    \sup_{\mathcal{H}_{\nu}} \left(u(y)-v^{R}(y)\right)_{+}= \sup_{B_{M}\cap \mathcal{H}_{\nu}} \left(u(y)-v^{R}(y)\right)_{+}&\leq \max_{\partial B_{M}\cap \overline{\mathcal{H}_{\nu}}} \left(u(y)-v^{R}(y)\right)_{+}\\
    &\leq \sup_{\Sigma_{\nu}} \left(u(y)-v^{R}(y)\right)_{+}=0. 
\end{align*}

This yields that 
\begin{equation*}
    u(y)\leq v^{R}(y)\quad\text{in $\mathcal{H}_{\nu}$}. 
\end{equation*}
Finally, as $\psi_{R}(y)\rightarrow 0$ pointwise as $R\to \infty$, we have $v^{R}(y)\rightarrow v(y)$ pointwise as $R\to\infty$, independent of $\eta$ and $\ve$, and thus
\begin{equation*}
    u(y)\leq v(y)\quad\text{in $\mathcal{H}_{\nu}$.}
\end{equation*}

\end{proof}

In order to conclude the statement of Theorem \ref{t.planarint} along the whole sequence $T\to \infty$, we refer to a result of the famous (unpublished) work of Lions-Papanicolaou-Varadhan \cite{LPV}, concerning the existence of periodic correctors.

\begin{theorem}\cite[Theorem 1]{LPV}\label{t.lpv}
For each $p\in\RR^{N}$, there exists a unique number $\overline{H}(p)$ and $u\in\textup{Lip}(\mathbb{T}^{N})$ (the set of Lipschitz continuous and $\mathbb{T}^{N}$-periodic functions) so that $u$ solves
\begin{equation}\label{e.corrector}
    H(p+\nabla u, x)=\overline{H}(p)\quad\text{in $\RR^{N}$} 
\end{equation}
in the viscosity sense. 
\end{theorem}

We note that the function $u$ satisfying \eqref{e.corrector} is clearly not unique (for any $M\in \RR$, the function $u+M$ is also a solution to \eqref{e.corrector}), but emphasize that Theorem \ref{t.lpv} guarantees $\overline{H}(p)$ is unique. 

Equipped with Lemma \ref{l.hompmp}, Lemma \ref{l.compv}, and Theorem \ref{t.lpv}, we now present the proof of Theorem \ref{t.planarint}. 

\begin{proof}[Proof of Theorem \ref{t.planarint}]
We first argue that the value of $c(\nu)$ must be unique in $\mathcal{H}_{\nu}$. Let us suppose, for the purposes of contradiction, that $c(\nu)$ is not unique, and define 
\begin{align}\label{e.c1def}
    c_{1}(\nu)
    :&=\inf\Big\{c(\nu)\in [\sqrt{\theta}, \sqrt{\Theta}]: \text{$\exists$ a subsequence $\left\{T_{m}\right\}$ such that }\\
    &\quad \quad \quad  \text{$\lim_{T_{m}\to \infty} k_{m}(x)=c(\nu)x\cdot \nu$ loc. uniformly in $\mathcal{H}_{\nu}$}\Big\}\notag,
\end{align}
and 
%and moreover, let us suppose that
\begin{align*}
    c_{2}(\nu)
    :&=\sup\Big\{c(\nu)\in [\sqrt{\theta}, \sqrt{\Theta}]: \text{$\exists$ a subsequence $\left\{T_{m}\right\}$ such that }\\
    &\quad \quad \quad  \text{$\lim_{T_{m}\to \infty} k_{m}(x)=c(\nu)x\cdot \nu$ loc. uniformly in $\mathcal{H}_{\nu}$}\Big\}. 
\end{align*}
By definition, $c_{1}(\nu)<c_{2}(\nu).$
Setting
\begin{equation*}
    w_{1}(y):=h_{\nu}(y)-c_{1}(\nu)(y\cdot \nu)\quad\text{and}\quad w_{2}(y):=h_{\nu}(y)-c_{2}(\nu)(y\cdot \nu), 
\end{equation*}
we have 
\begin{equation*}
    H(c_{1}(\nu)\nu+\nabla w_{1},  y)=1\quad\text{and}\quad H(c_{2}(\nu)\nu+\nabla w_{2}, y)=1\quad\text{in $\mathcal{H}_{\nu}$.}
    \end{equation*}
    By 1-homogeneity of $H(\cdot, y)$, this implies that for  $\tilde{w}_{1}(y):=c_{1}(\nu)^{-1}w_{1}(y)$ and $\tilde{w}_{2}(y):=c_{2}(\nu)^{-1}w_{2}(y)$, we have 
\begin{equation}\label{e.strictstack}
    H(\nu+\nabla \tilde{w}_{2}, y)=\frac{1}{c_{2}(\nu)}<\frac{1}{c_{1}(\nu)}=H(\nu+\nabla \tilde{w}_{1}, y)\quad\text{in $\mathcal{H}_{\nu}$}.
\end{equation}
    %\begin{equation*}
        %H\left(\nu+\nabla \frac{w_{1}}{c_{1}(\nu)}, x\right)=\frac{1}{c_{1}(\nu)}\quad\text{and}\quad H\left(\nu+\nabla \frac{w_{2}}{c_{2}(\nu)}, x\right)=\frac{1}{c_{2}(\nu)}. 
    %\end{equation*}
   % and note that we still have 
%\begin{equation*}
    %\frac{1}{c_{i}(\nu)T_{m_{i}}} w_{m_{i}}(T_{m_{i}}x)\rightarrow 0\quad\text{and}\quad \frac{1}{c_{j}(\nu)T_{m_{j}}} %w_{m_{j}}(T_{m_{j}}x)\rightarrow 0.
%\end{equation*}
%Letting 
We now claim for any $K\subseteq \mathcal{H}_{\nu}$ compact, 
\begin{equation}\label{e.strictsub}
    \limsup_{T\to\infty} \frac{1}{T}w_{2}(Tx)\leq  0\quad\text{and}\quad \liminf_{T\to \infty} \frac{1}{T}w_{1}(Tx)\geq 0\quad\text{for all $x\in K$.}
\end{equation}

Indeed, by Lemma \ref{l.hompmp}, we know that for any sequence $T\to \infty$, there exists a subsequence such $\left\{T_{m}\right\}$ and $\bar{c}(\nu)\in [\sqrt{\theta}, \sqrt{\Theta}]$ so that $T_{m}^{-1}h_{\nu}(T_{m}x)\rightarrow \bar{c}(\nu)x\cdot \nu$ as $T_{m}\to \infty$, and $\bar{c}(\nu)\in [c_{1}(\nu), c_{2}(\nu)]$. This, in particular, implies that for any convergent subsequence, 
\begin{align*}
T_{m}^{-1}w_{1}(T_{m}x)&=T_{m}^{-1}h_{\nu}(T_{m}x)-c_{1}(\nu)x\cdot \nu\\
&=T_{m}^{-1}h_{\nu}(T_{m}x)-\bar{c}(\nu)x\cdot \nu+[\bar{c}(\nu) -c_{1}(\nu)](x\cdot \nu)\\&\geq  T_{m}^{-1}h_{\nu}(T_{m}x)-\bar{c}(\nu)x\cdot \nu,
\end{align*}
where the right hand side tends to 0 as $m \to \infty$. Taking $\liminf$ of both sides, we see that every subsequential limit, hence the full sequence, satisfies the second assertion of \eqref{e.strictsub}. The other inequality in \eqref{e.strictsub} follows by an analogous argument. In particular, taking $x\in \mathbb{S}^{N-1}\cap \mathcal{H}_{\nu}$ and $y=Tx$, we have 
\begin{equation}\label{e.1strictsub}
    \limsup_{|y|\to\infty} \frac{\tilde{w}_{2}(y)}{|y|}\leq 0\quad\text{and}\quad \liminf_{|y|\to\infty}\frac{\tilde{w}_{1}(y)}{|y|}\geq 0. 
\end{equation}

By Theorem \ref{t.lpv}, let $u$ be the periodic corrector corresponding to $\overline{H}(\nu)$, so that 
\begin{equation}\label{e.corr}
    \begin{cases}
    H(\nu+\nabla u,y)=\overline{H}(\nu)&\text{in $\RR^{N}$},\\
    \lim_{|y|\to \infty} \frac{u(y)}{|y|}=0. 
    \end{cases}
\end{equation}

We consider two cases: 
\noindent \emph{Case 1:} $\overline{H}(\nu)\in \left[\frac{1}{c_{2}(\nu)}, \frac{1}{c_{1}(\nu)}\right]$. Without loss of generality, we will assume $\overline{H}(\nu)<\frac{1}{c_{1}(\nu)}$ (if not, then we can repeat the following argument using that $\overline{H}(\nu)>\frac{1}{c_{2}(\nu)}$). We claim that there exists $\eta>0$ so that the hypotheses of Lemma \ref{l.compv} are satisfied. 

Indeed, for the functions $u^{\eta}(y):=u(y)+\eta(y\cdot \nu)$ and $\tilde{w}_{1}(y)$, by \eqref{e.strictstack}, \eqref{e.hlip}, and the assumption of Case 1 that $\bar{H}(\nu)<\frac{1}{c_{1}(\nu)}$, there exists $\eta>0$ sufficiently small so that 
\begin{align*}
    H(\nu+\nabla u^{\eta}, y)\leq H(\nu+\nabla u,y)+\frac{\eta}{\sqrt{\theta}}=\overline{H}(\nu)+\frac{\eta}{\sqrt{\theta}}&< -\frac{\eta}{\sqrt{\theta}}+\frac{1}{c_{1}(\nu)}\\
    &=-\frac{\eta}{\sqrt{\theta}}+H(\nu+\nabla \tilde{w}_{1},y)\quad\text{in $\mathcal{H}_{\nu}$}. 
\end{align*}
We note that the function $u\in\text{Lip}(\mathbb{T}^{N})$ is bounded, and for any $M>0$, the function $u-M\in \text{Lip}(\mathbb{T}^{N})$ and also satisfies \eqref{e.corrector}. We may thus assume assume without loss of generality that $u\leq 0$. This implies 
\begin{equation*}
    u^{\eta}(y)=u(y)+\eta(y\cdot \nu)=u(y)\leq 0=\tilde{w}_{1}(y)\quad\text{on $\Sigma_{\nu}$.}
\end{equation*}
Furthermore, by \eqref{e.1strictsub} and \eqref{e.corr}, 
\begin{equation*}
    \liminf_{|y|\to \infty} \frac{\tilde{w}_{1}(y)-u^{\eta}(y)}{|y|}\geq-\eta. 
\end{equation*}

By Lemma \ref{l.compv}, this yields
\begin{equation}\label{e.contra}
    u(y)+\eta(y\cdot \nu)\leq \tilde{w}_{1}(y)\quad\text{in $\mathcal{H}_{\nu}$.}
\end{equation}

If the infimum in \eqref{e.c1def} is achieved, then there exists a subsquence $\left\{y_{m}\right\}=\left\{T_{m}x\right\}$ so that $\lim_{m\to\infty} |y_{m}|^{-1}\tilde{w}_{1}(y_{m})=0$. Dividing \eqref{e.contra} by $|y|$, using that $y\cdot\nu>0$, and evaluating this inequality along this particular sequence, we have 
\begin{equation*}
    \eta\leq 0,
\end{equation*}
which is a contradiction. 

If the infimum in \eqref{e.c1def} is not achieved, then we know that for every $\ve>0,$ there exists a subsequence $\left\{y_{m}\right\}=\left\{T_{m}x\right\}$ so that $T_{m}^{-1}h_{\nu}(T_{m}x)\to (c_{1}(\nu)-\ve)x\cdot \nu$. In particular, the function $\tilde{w}_{1,\ve}(x):=(c_{1}(\nu)-\ve)^{-1}h_{\nu}(x)-x\cdot \nu$ solves 
\begin{equation*}
    \frac{1}{c_{1}(\nu)-\ve}=H(\nu+\nabla \tilde{w}_{1,\ve}, y)>H(\nu+\nabla u,y) \quad\text{in $\mathcal{H}_{\nu}$.}
\end{equation*}
We now repeat the above argument with $u^{\eta,\ve}(y):=u(y)+\left(\eta+\frac{\ve}{c_{1}(\nu)-\ve}\right) y\cdot \nu.$ We may again choose $\eta>0$ sufficiently small so that 
\begin{equation*}
\begin{cases}
    H(\nu+\nabla u^{\eta,\ve},y)< -\frac{\eta}{\sqrt{\theta}}+H(\nu+\nabla \tilde{w}_{1,\ve},y)&\text{in $\mathcal{H}_{\nu},$}\\
    u^{\eta, \ve}(y)\leq \tilde{w}_{1}(y)&\text{on $\Sigma_{\nu}$,}
    \end{cases}
\end{equation*}
and we can check that 
\begin{equation*}
    \liminf_{|y|\to\infty} \frac{\tilde{w}_{1,\ve}(y)}{|y|}\geq \frac{\ve}{c_{1}(\nu)-\ve},
\end{equation*}
which implies
\begin{equation*}
    \liminf_{|y|\to \infty} \frac{\tilde{w}_{1, \ve}(y)-u^{\eta, \ve}(y)}{|y|}\geq -\eta. 
\end{equation*}
By another application of Lemma \ref{l.compv}, we have 
\begin{equation*}
    u(y)+\left(\eta+\frac{\ve}{c_{1}(\nu)-\ve}\right)y\cdot\nu\leq \tilde{w}_{1,\eta}(y)\quad\text{in $\mathcal{H}_{\nu}$}. 
\end{equation*}
Dividing by $|y|$ and taking this along the particular subsequence $\left\{y_{m}\right\}$, we have
\begin{equation*}
    \eta+\frac{\ve}{c_{1}(\nu)-\ve}\leq 0, 
\end{equation*}
which is another contradiction. 

\noindent \emph{Case 2.} $\overline{H}(\nu)\notin \left[\frac{1}{c_{2}(\nu)}, \frac{1}{c_{1}(\nu)}\right]$. Without loss of generality, let us assume $\overline{H}(\nu)>\frac{1}{c_{1}(\nu)}$ (otherwise, a symmetric argument to handle the other alternative). We consider the function $\hat{w}_{1, \la}(y):=\la \tilde{w}_{1}(y)+(1-\la)u(y)$. We note that by the convexity of $H(\cdot, y)$,
\begin{align*}
H(\nu+\nabla \hat{w}_{1,\la},y)\leq \la H(\nu+\nabla \tilde{w}_{1},y)+(1-\la)H(\nu+\nabla u,y)&=\frac{\la}{c_{1}(\nu)}+(1-\la)\overline{H}(\nu)\\
&< \lambda \overline{H}(\nu) + (1 - \lambda) \overline{H}(\nu)\\
&<\overline{H}(\nu)\quad\text{in $\mathcal{H}_{\nu}$.}
\end{align*}
Moreover, by definition of $\tilde{w}_{1}$ and \eqref{e.metriccom}, we have  
\begin{equation}\label{e.lastgrowth}
    \limsup_{|y|\to \infty}\frac{\hat{w}_{1, \la}(y)}{|y|}=\limsup_{|y|\to\infty} \frac{\la \tilde{w}_{1}(y)+(1-\la)u(y)}{|y|}\leq \la\left(\frac{\sqrt{\Theta}}{c_{1}(\nu)}+1\right)=:\la M.  
\end{equation}
We now proceed by the same arguments as above. Let $u_{\eta}(y):=u(y)-\eta(y\cdot \nu),$ choose $\la>0,$ and then $\eta=\eta(\la)>0$ sufficiently small, so that 
\begin{equation*}
\begin{cases}
    H(\nu+\nabla \hat{w}_{1,\la},y)< -\frac{\eta+\la M}{\sqrt{\theta}}+H(\nu+\nabla u_{\eta}, y)&\text{in $\mathcal{H}_{\nu},$}\\
    \hat{w}_{1,\la}(y)\leq u_{\eta}(y)&\text{on $\Sigma_{\nu}$.}
    \end{cases}
\end{equation*}

By \eqref{e.lastgrowth}, we have 
\begin{equation*}
    \liminf_{|y|\to \infty} \frac{u_{\eta}(y)-\hat{w}_{1,\la}(y)}{|y|}\geq -(\eta+\la) M. 
\end{equation*}

By Lemma \ref{l.compv}, this yields 
\begin{equation*}
    \hat{w}_{1,\la}(y)\leq u(y)-\eta(y\cdot\nu)\quad\text{in $\mathcal{H}_{\nu}$.}
\end{equation*}
Note that for any $\la>0$, if $\left\{y_{m}\right\}$ is a sequence such that $\lim_{|y_{m}|\to \infty} |y_{m}|^{-1}\tilde{w}_{1}(y_{m})=0$, then we have $\lim_{|y_{m}|\to \infty} |y_{m}|^{-1}\hat{w}_{1,\la}(y_{m})=0$. We then argue as in the last step of Case 1 to conclude that upon dividing by $|y|$ and sending $|y|\to\infty,$ we will have $0\leq -\eta.$ We note that we can make a similar argument as in Case 1 if the infimum in \eqref{e.c1def} is not achieved.

  Now since $c(\nu)$ is uniquely determined, we have that every subsequence $\left\{k_{m}\right\}=\left\{T_{m}^{-1}h_{\nu}(T_{m}\cdot)\right\}$ has a further subsequence which converges locally uniformly to $x\mapsto c(\nu)x\cdot \nu$, where $c(\nu)=\frac{1}{\overline{H}(\nu)}$ is uniquely defined. This implies that for any $K\subseteq \mathcal{H}_{\nu}$ compact, we have 
  \begin{equation*}
      \lim_{T\to\infty} \sup_{x\in K}\left|\frac{1}{T}h_{\nu}(Tx)-c(\nu)x\cdot \nu\right|=0. 
  \end{equation*}
  
  This, combined with Lemma \ref{l.hompmp}, guarantees that for any $K\subseteq \RR^{N}\setminus \Sigma_{\nu}$ compact, we have
    \begin{equation*}
      \lim_{T\to\infty} \sup_{x\in K}\left|\frac{1}{T}h_{\nu}(Tx)-c(\nu)x\cdot \nu\right|=0. 
  \end{equation*}
  
  We note that the functions $\left\{k_{m}\right\}$ are themselves uniformly bounded and uniformly equicontinuous on any compact set $K\subseteq \RR^{N}.$ In particular,  by the Arzel\`{a}-Ascoli Theorem, this implies that for any sequence $\left\{T_{m}\right\}$ tending to infinity, there exists a function $q(x)$ so that 
  \begin{equation*}
     \lim_{m\to\infty} \sup_{x\in K}|k_{m}(x)-q(x)|=0. 
  \end{equation*}
  In particular, by uniqueness of limits, we must have that $q(x)\equiv c(\nu)x\cdot \nu$, and this yields that for any $K\subseteq \RR^{N}$ compact, 
      \begin{equation*}
      \lim_{T\to\infty} \sup_{x\in K}\left|\frac{1}{T}h_{\nu}(Tx)-c(\nu)x\cdot \nu\right|=0. 
  \end{equation*}
We also have that 
      \begin{equation*}
      \lim_{T\to\infty} \sup_{x\in K}\left|\frac{1}{T}h_{-\nu}(Tx)-c(-\nu)(x\cdot -\nu)\right|=0. 
  \end{equation*}
  
 Since $h_{\nu}(x)=-h_{-\nu}(x)$ by \eqref{e.hodd}, taking $x\notin \Sigma_{\nu},$ we have 
\begin{align*}
|c(\nu)x\cdot\nu-c(-\nu)x\cdot \nu|&\leq  \left|-\frac{1}{T}h_{\nu}(Tx)+c(\nu)x\cdot \nu\right|+\left|\frac{1}{T}h_{\nu}(Tx)-c(-\nu)x\cdot \nu\right|\\
&=\left|\frac{1}{T}h_{\nu}(Tx)-c(\nu)x\cdot \nu\right|+\left|-\frac{1}{T}h_{-\nu}(Tx)-c(-\nu)x\cdot \nu\right|\\
&=\left|\frac{1}{T}h_{\nu}(Tx)-c(\nu)x\cdot \nu\right|+\left|-\frac{1}{T}h_{-\nu}(Tx)+c(-\nu)(x\cdot-\nu)\right|
\end{align*}

Taking a limit on the right as $T\to \infty$, we arrive at the conclusion that $c(\nu)=c(-\nu).$
\end{proof}

\begin{remark}
A posteriori, knowing that $c(\nu)=\frac{1}{\overline{H}(\nu)},$ we may use various known properties of effective Hamiltonian from \cite[Proposition 2]{LPV} to conclude properties of $c(\nu)$. For instance, we obtain that $c(\cdot)$ is Lipschitz continuous, and moreover that 
 \begin{equation*}
     \overline{H}(p):=\frac{1}{c(p/|p|)}|p|
 \end{equation*}
 is convex. One could pursue further analysis of alternative representation formulas for  $\overline{H}(\cdot)$ through its convex, homogeneous, and continuous nature (e.g. the analysis of supports and gauges, see \cite{rock}), but we do not carry this out here. 
 
 \end{remark}
 \begin{remark}\label{r.fg}
 We furthermore recall that in the works of \cite{Bangert, burago}, the authors identify the stable norm $\norm{x-y}_{*}$ for $x, y\in \mathbb{R}^{N}$, which represents a homogenized distance function between $x$ and $y$. If we think of $c(\nu)x\cdot\nu$ as the homogenized distance function to the plane $\Sigma_{\nu},$ then in comparison to the Euclidean setting, we expect that for $x\in \mathcal{H}_{\nu}$,
 \begin{equation*}
     c(\nu)x\cdot\nu=\inf_{y\in \Sigma_{\nu}} \norm{x-y}_{*}. 
 \end{equation*}
While we do not explore it here, we think it would be very interesting to connect Theorem \ref{t.planarint} with this related body of literature in geometry.
\end{remark}

Finally, we complete the paper with the proof of Theorem \ref{t.ap}. We first note an equivalent definition of Bohr almost periodicity, found in \cite{subin}, and used throughout the literature in almost periodic homogenization:
\begin{definition}\label{d.ap2}
A continuous, bounded function $g:\RR^{d}\rightarrow \RR$ is said to be \emph{Bohr almost periodic} if family of functions
\begin{equation*}
    \left\{g(\cdot+z): z\in \RR^{d}\right\}
\end{equation*}
is relatively compact in $\norm{\cdot}_{\infty}.$
\end{definition}

With this in hand, we are now ready to prove Theorem \ref{t.ap}:
\begin{proof}[Proof of Theorem \ref{t.ap}]
We note that if $a(\cdot)$ is (Bohr) almost periodic, then $a(\cdot)$ has a uniformly convergent Bochner-Fourier series (see \eqref{e.fourier}). This was the only fact we needed in the proof of Lemma \ref{l.almper}. By the equivalence of Definition \ref{d.ap2} and the characterization of Bohr almost periodic functions via their uniformly convergent Bochner-Fourier series (see \eqref{e.fourier}), Lemma \ref{l.almper} holds when $a(\cdot)$ is almost periodic. 

The proof of Lemma \ref{l.hompmp} only relies on the almost periodicity from Lemma \ref{l.almper}, in which case Lemma \ref{l.hompmp} also holds when $a(\cdot)$ is almost periodic. 

Finally, the proof of Theorem \ref{t.planarint} only uses the fact that for every $p\in \RR^{N}$, there exists a unique value of $\overline{H}(p)$ and a corrector $u$ which is bounded and solves 
\begin{equation}\label{e.loc}
    H(p+\nabla u,y)=\overline{H}(p)\quad\text{in $\RR^{N}$.}
\end{equation}
When $a(\cdot)$ is almost periodic, a result of Ishii \cite[Theorem 2]{Iap} yields the existence of a bounded uniformly continuous function $u$ satisfying \eqref{e.loc}. This implies that the conclusion of Theorem \ref{t.planarint} holds true when $a(\cdot)$ is almost periodic. 
\end{proof}

\appendix
\section{Modified Boundary Conditions via De Giorgi's Slicing Technique} \label{s.app}
Recalling the distance function $h_\nu$ introduced in Section \ref{ss.hnu}, we next argue that $\sigma$ has an alternative representation with boundary conditions in terms of the function $q\circ h_{\nu}$.
\begin{lemma} \label{l.newbc} 
Define $\overline{\sigma}: \mathbb{S}^{N-1} \to (0,\infty)$ by 
\begin{multline*}
\overline{\sigma}(\nu) :=  \lim_{T \to \infty} \frac{1}{T^{N-1}} \inf \Big\{\int_{TQ_\nu} \left[ a(y)W(u) + \frac{1}{2}|\nabla u|^2 \right] \,dy : \\ u \in H^1(TQ_\nu), u|_{\partial TQ_\nu} = q \circ h_{\nu} \Big\}.
\end{multline*}
Then $\overline{\sigma}(\nu) = \sigma(\nu)$ for all $\nu\in \mathbb{S}^{N-1}$. 
\end{lemma}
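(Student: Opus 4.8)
The plan is to show both inequalities $\overline\sigma(\nu)\leqslant\sigma(\nu)$ and $\overline\sigma(\nu)\geqslant\sigma(\nu)$ by comparing the two minimization problems through a suitable modification of competitors near $\partial(TQ_\nu)$. The key point is that the two problems differ only in (a) the additive constant $\delta_T$ in the potential term and (b) the boundary data ($q_T\circ h_\nu$ versus $\tilde u_{\rho,1,\nu}$). Both discrepancies are exponentially small once one works on a slightly smaller cube, because $\delta_T\to 0$ at the rate \eqref{e.delt}, and both $q_T\circ h_\nu$ and $\tilde u_{\rho,1,\nu}$ are within $c_1 e^{-c_2 T}$ of $\pm 1$ near the lateral boundary of $TQ_\nu$ while the faces orthogonal to $\nu$ contribute only a lower-order amount to the $T^{N-1}$-normalized energy.

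\textbf{Step 1 (the additive $\delta_T$ term is negligible).} For any admissible $u$ with $|u|\leqslant 1$ (which one may assume by truncation, since truncating decreases both the potential and the gradient term), one has $0\leqslant\int_{TQ_\nu}a(y)\delta_T\,dy\leqslant \Theta\,\delta_T\,T^N$. By \eqref{e.delt}, $\delta_T\leqslant Ce^{-5\sqrt{2\Theta}\,T}$, so $\frac{1}{T^{N-1}}\Theta\,\delta_T\,T^N=\Theta\,\delta_T\,T\to 0$ as $T\to\infty$. Hence adding or removing the $\delta_T$-term changes the normalized infimum by a quantity tending to zero, and we may henceforth ignore it.

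\textbf{Step 2 (from $\sigma$ to $\overline\sigma$).} Let $u$ be a near-optimal competitor for the $\sigma$-problem on $TQ_\nu$, so $u=\tilde u_{\rho,1,\nu}$ on $\partial(TQ_\nu)$. We build a competitor $v$ for the $\overline\sigma$-problem by interpolating, in a boundary layer of width $1$ (say), between $u$ on $(T-2)Q_\nu$ and $q_T\circ h_\nu$ on $\partial(TQ_\nu)$: take a cutoff $\eta$ with $\eta=1$ on $(T-2)Q_\nu$, $\eta=0$ near $\partial(TQ_\nu)$, $|\nabla\eta|\leqslant C$, and set $v:=\eta u+(1-\eta)(q_T\circ h_\nu)$. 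On the boundary layer, by \eqref{e.qTplanebnds} and the analogous bound for $\tilde u_{\rho,1,\nu}$, both $u$ and $q_T\circ h_\nu$ lie within $c_1 e^{-c_2 |y\cdot\nu|}$ of the same value $\pm 1$ on the lateral part of the layer, so $|v-u|\leqslant c_1 e^{-c_2 T/2}$ there and $W(v)\leqslant C e^{-c_2 T/2}$; similarly $|\nabla v-\nabla u|\leqslant C(|u-q_T\circ h_\nu|+|\nabla(u-q_T\circ h_\nu)|)$, and the extra gradient contribution from $\nabla\eta$ is controlled by $C e^{-c_2 T/2}$. The portion of the layer meeting the two faces orthogonal to $\nu$ has $(N-1)$-dimensional measure $O(T^{N-2})$ and bounded integrand, contributing $O(T^{-1})$ after normalization. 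Altogether $\frac{1}{T^{N-1}}\mathcal F^{\overline\sigma}_{TQ_\nu}(v)\leqslant\frac{1}{T^{N-1}}\mathcal F^{\sigma}_{TQ_\nu}(u)+o(1)$, giving $\overline\sigma(\nu)\leqslant\sigma(\nu)$.

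\textbf{Step 3 (the reverse inequality) and the main obstacle.} The reverse direction is entirely symmetric: given a near-optimal competitor $w$ for the $\overline\sigma$-problem, interpolate in a unit-width boundary layer between $w$ on the interior and $\tilde u_{\rho,1,\nu}$ on $\partial(TQ_\nu)$, using again that both functions are exponentially close to $\pm1$ on the lateral boundary layer (by \eqref{e.qTplanebnds} for $q_T\circ h_\nu$ — which controls $w$'s boundary data — and directly for $\tilde u_{\rho,1,\nu}$), and discard the $\delta_T$ term via Step 1. This yields $\sigma(\nu)\leqslant\overline\sigma(\nu)$ and hence equality; positivity of $\overline\sigma$ follows since $\overline\sigma=\sigma$ and $\sigma>0$ (Theorem \ref{t.sigma}(ii), or directly). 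The main technical obstacle is \emph{bookkeeping the boundary-layer estimate carefully}: one must verify that on the lateral faces of $TQ_\nu$ the competitor's boundary data is exponentially close to a \emph{single} constant $\pm1$ (so that the interpolation does not create large gradients), and separately handle the two faces orthogonal to $\nu$, where the data genuinely transitions through $0$ but which contribute only $O(T^{N-2})$ to the energy. The exponential bounds \eqref{e.qTplanebnds}, \eqref{e.qTqunif}, \eqref{e.qTqderivbox}, and the rate \eqref{e.delt} are precisely what make all error terms beat the $T^{N-1}$ normalization, so the proof is a matter of assembling these estimates rather than introducing a new idea.
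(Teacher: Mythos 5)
Your Step 1 (discarding the $a\,\delta_T$ term) is correct and matches the paper. The gap is in Steps 2 and 3. You interpolate \emph{inside} $TQ_\nu$, in the layer $TQ_\nu\setminus (T-2)Q_\nu$, between a near-optimal competitor $u$ and $q_T\circ h_\nu$, and you justify the smallness of the interpolation error by asserting that ``both $u$ and $q_T\circ h_\nu$ lie within $c_1e^{-c_2|y\cdot\nu|}$ of the same value $\pm1$ on the lateral part of the layer.'' But $u$ is only constrained to equal $\tilde u_{\rho,1,\nu}$ \emph{on} $\partial(TQ_\nu)$; in the interior of the layer it is an arbitrary near-minimizer, and no pointwise bound of plane-like type \eqref{e.planelikedef} is available for it (such bounds are a nontrivial theorem for exact minimizers, and the whole point of the lemma is to avoid needing them). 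Without that control, the cross term $|\nabla\eta|^2|u-q_T\circ h_\nu|^2$ and, worse, the potential term $\int_{\mathrm{layer}}aW(v)$ can each be of order $T^{N-1}$ — the layer has volume $\sim T^{N-1}$ and a near-minimizer may concentrate a fixed fraction of its energy there — so the error does not beat the normalization. The same objection applies verbatim to your Step 3.

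The paper's proof circumvents exactly this. After rescaling to the unit cube ($\ve=1/T$), it \emph{extends} the competitor $v_j$ to the larger cube $\tau Q\setminus Q$ by the explicit mollified function $\rho_{1/\eta_j}\ast u_{0,\nu}$, so that the interpolation with $w_j=q_{\eta_j^{-1}}\circ h_\nu(\cdot/\eta_j)$ takes place in a region where \emph{both} functions are explicit and converge to $u_{0,\nu}$ in $L^2$. Even then, pointwise closeness fails near the interface $\{x\cdot\nu=0\}$, so the cutoff's gradient term is handled by a De Giorgi slicing argument: the transition layer of thickness $\eta_j\lambda_j$ (with $\lambda_j:=\|v_j-w_j\|_{L^2(\tau Q\setminus Q)}\to0$) is chosen by averaging over $M_{k,j}$ candidate shells so that the bad terms are controlled in mean, contributing $O(k\lambda_j)$. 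Finally, the extension region's own energy is only $O((\tau-1)^{N-1})$, and one rescales from $\tau Q$ back to $Q$ and sends $\tau\to1^+$. If you want to repair your argument, you must adopt this structure (extend outside the cube by the explicit boundary datum, slice to pick a good transition shell, and dilate); the fixed-width interior layer cannot work as written.
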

\begin{proof}
%We will in fact work with the modified quantity
%\begin{multline}\label{e.sigbar}
%\overline{\sigma}'(\nu) :=  \lim_{T \to \infty} \frac{1}{T^{N-1}} \inf \left\{\int_{TQ_{\nu}} \left[ a(y)(W(u)+\delta_{T}) + \frac{1}{2}|\nabla u|^2 \right] \,dy : \right.\\ \left.u \in H^1(TQ_\nu), u|_{\partial TQ_\nu} = q_T \circ h_{\nu} \right\}.
%\end{multline}
%Clearly, we have $\overline{\sigma}'(\nu)\geqslant  \overline{\sigma}(\nu)$. Moreover, as $\theta\leqslant a(y)\leqslant \Theta$, 
%\begin{align*}
%|\overline{\sigma}(\nu)-\overline{\sigma}'(\nu)|\leqslant  \lim_{T \to \infty}C(N)\Theta \delta_{T}. 
%\end{align*}
%As $\delta_{T}\leqslant \sech^{4}(T)\leqslant (2e^{-T})^{4}=16e^{-4T}$, we have that $\overline{\sigma}'(\nu)\leqslant \overline{\sigma}(\nu)$. Therefore, it suffices to prove the result taking \eqref{e.sigbar} as the definition of $\overline{\sigma}$. 

 %By \eqref{e.delt}, $T\delta_{T}\rightarrow 0$ as $T\rightarrow \infty$, and because $a$ is bounded (see \eqref{h.nondeg}), we may, without loss of generality, work with the representation 
%\begin{multline*}
%\overline{\sigma}(\nu) =  \lim_{T \to \infty} \frac{1}{T^{N-1}} \inf \Big\{\int_{TQ_\nu} \left[ a(y)W(u) + \frac{1}{2}|\nabla u|^2 \right] \,dy : \\ u \in H^1(TQ_\nu), u|_{\partial TQ_\nu} = q_T \circ h_{\nu} \Big\}.
%\end{multline*}

Fix $\nu\in \mathbb{S}^{N-1}$. For ease of notation, for $u\in H^{1}(A)$ and $A\subseteq \RR^{N}$ open, we introduce the localized functional
\begin{equation*}
\mathcal{G}_{\ve}(u, A):=\int_{A}\left[ \frac{1}{\ve}a\left(\frac{y}{\ve}\right)W(u) + \frac{\ve}{2}|\nabla u|^2 \right] \,dy.
\end{equation*} 

By the change of variables $y\mapsto Tx$ with $\ve=\frac{1}{T}$, we rewrite $\overline{\sigma}$ as 
\begin{equation}\label{e.sigbarep}
\overline{\sigma}(\nu)=\lim_{\ve \to 0} \inf \left\{\mathcal{G}_{\ve}(u, Q_\nu): u \in H^1\left(Q_\nu\right), u|_{\partial Q_\nu} = \left(q \circ h_{\nu}\right)\left(x/\ve\right) \right\},
\end{equation}
and, similarly, 
we rewrite \eqref{e.sigma} as
\begin{equation}\label{e.sigmaep}
\sigma(\nu) = \lim_{\ve\rightarrow 0} \inf \left\{\mathcal{G}_{\ve}(u, Q_\nu) : u \in H^1\left(Q_\nu\right), u|_{\partial Q_\nu} = \tilde{u}_{\rho, 1/\ve, \nu} \right\}. 
\end{equation}

To show that $\sigma(\nu) = \overline{\sigma}(\nu)$, we first prove that $\sigma(\nu)\geqslant  \overline{\sigma}(\nu)$. The inequality $\overline{\sigma}(\nu)\leqslant \sigma(\nu)$ can be carried out in an analogous manner. Throughout the rest of the proof, we let $Q:=Q_{\nu}$. Let $\eta_j \rightarrow0$ as $j \to \infty$ be fixed, and $\left\{v_j\right\}\subseteq H^1( Q)$ with $v_j(x) = \rho_{1/\eta_{j}} * u_{0,\nu}(x)$ on $\partial Q$ be such that  
%\begin{align}\label{e.vjdef}
%\int_{Q}\left[ \frac{1}{\eta_{j}}a\left(\frac{x}{\eta_{j}}\right)W(v_{j})+ \frac{\eta_{j}}{2}|\nabla v_j|^2\right] \,dx \leqslant \sigma(\nu) + \zeta.
%\end{align}
%Without loss of generality, we assume 
\begin{equation}\label{e.minseq}
\lim_{j\rightarrow \infty}\mathcal{G}_{\eta_{j}}(v_{j}, Q)=\sigma(\nu). 
\end{equation}
%in which case we have 
%\begin{equation}\label{e.energyconv}
%\int_{Q}\left[ \frac{1}{\eta_{j}}a\left(\frac{x}{\eta_{j}}\right)W(v_{j})+ \frac{\eta_{j}}{2}|\nabla v_j|^2\right] \,dx \xrightarrow[j\rightarrow \infty]{} \sigma(\nu). 
%\end{equation}
We extend $v_{j}$ to $\tau Q$, for $\tau\in (1,2]$, by defining $v_{j}(x):=\rho_{1/\eta_{j}} * u_{0,\nu}(x)$ for $x\in \tau Q\setminus Q$.

\noindent Set $w_{j}(x):=\left(q\circ h_{\nu}\right)(x/\eta_{j})$ for $x\in \mathbb{R}^{N}$. By \eqref{e.tanhsech}, since $\tau\in (1,2]$, we have 
\begin{align*}
\begin{cases}
w_{j}\left(x\right)\geqslant 1-c_{1}e^{-c_{2}\frac{|x\cdot \nu|}{\eta_{j}}}&\text{if $x\in \tau Q_{\nu}\cap \left\{x\cdot \nu\geqslant  0\right\}$,}\\
w_{j}\left(x\right)\leqslant -1+c_{1}e^{-c_{2}\frac{|x\cdot \nu|}{\eta_{j}}}&\text{if $x\in \tau Q_{\nu}\cap \left\{x\cdot \nu<  0\right\}$.}
\end{cases}
%&\geqslant  q\left(h_{\nu}\left(\frac{x}{\eta_{j}}\right)\right)-2e^{-\sqrt{2}C\eta_{j}^{-1}}\\
%&\geqslant  \tanh c\left(\frac{|x\cdot \nu|}{\eta_{j}}\right)-2e^{-\sqrt{2}C\eta_{j}^{-1}}\\
\end{align*}
%and for all $x\in \tau Q$ with $x\cdot \nu<0$, 
%A similar argument shows that for all $x\in 2Q$ with $x\cdot \nu< \sqrt{\eta_{j}}$, and $j$ sufficiently large, 
Therefore, as $|w_{j}|\leqslant 1$, 
%=\lim_{j\rightarrow \infty}\int_{\tau Q\cap \left\{x\cdot \nu>0\right\}} |w_{j}-1|^{p}\, dx+\lim_{j\rightarrow \infty}\int_{\tau Q\cap \left\{x\cdot \nu<0\right\}} |w_{j}+1|^{p}\, dx\notag\\
\begin{align}\label{e.ujstep}
\lim_{j\rightarrow \infty}\norm{w_{j}-u_{0, \nu}}^{2}_{L^{2}(\tau Q)}&=\lim_{j\rightarrow \infty}\int_{\tau Q\cap \left\{x\cdot \nu>\sqrt{\eta_{j}}\right\}} |w_{j}-1|^{2}\, dx\\
&\quad +\lim_{j\rightarrow \infty}\int_{\tau Q\cap \left\{x\cdot \nu<-\sqrt{\eta_{j}}\right\}} |w_{j}+1|^{2}\, dx\notag\\
&\quad +\lim_{j\rightarrow \infty}\int_{\tau Q\cap \left\{-\sqrt{\eta_{j}}\leqslant x\cdot \nu\leqslant \sqrt{\eta_{j}}\right\}} 4\, dx\notag\\
&=0.
\end{align}
Clearly 
\begin{equation}\label{e.vjstep}
\lim_{j \to \infty} \|v_j - u_{0,\nu}\|_{L^2(\tau Q \setminus Q)} = 0. 
\end{equation} 
%\begin{equation*}
%\lim_{j\rightarrow \infty} \norm{v_{j}-u_{0, \nu}}_{L^{2}(\tau Q\setminus Q)}^{2}\leqslant 4\lim_{j\rightarrow \infty} \left|\left\{x\in \tau Q\setminus Q: |x\cdot \nu|\leqslant \eta_{j}\right\}\right|=C(\tau^{N-1}-1)\eta_{j}. 
%\end{equation*}
In particular, we have that 
\begin{equation}\label{e.ladef}
\la_{j}:=\norm{w_{j}-v_{j}}_{L^{2}(\tau Q\setminus Q)}\xrightarrow[j\rightarrow\infty]{}0.
\end{equation}

%We let $\xi_{k}\rightarrow 0$, and we will use $\left\{v_{j}\right\}$ to construct a competitor for $\overline{\sigma}(\nu)$ on cubes of the form $(1+\xi_{k})Q$.

We will now construct a function $u_{j}\in H^{1}(\tau Q)$ such that $u_{j}\mid_{\partial(\tau Q)}=w_{j}$, and for some $C>0$,  
\begin{equation}\label{e.energytau}
\limsup_{j\rightarrow \infty}\int_{\tau Q}\left[\frac{1}{\eta_{j}}a\left(\frac{x}{\eta_{j}}\right)W(u_{j})+\frac{\eta_{j}}{2}|\nabla u_{j}|^{2}\right]\, dx\leqslant \sigma(\nu) + C(\tau-1).
\end{equation}
Our main approach will be to define a new function which smoothly interpolates between $v_{j}$ in $Q$ and $w_{j}$ on $\partial (\tau Q)$ in the region $\tau Q\setminus Q$. 

%
%We use the notation 
%\begin{equation}
%Q^{(k)}:=\left\{x\in Q: d(x,\partial Q)<\frac{1}{k}\right\}. 
%\end{equation}

We note that by \eqref{e.tanhsech}, 
\begin{align}
|\nabla w_{j}(x)|^{2}&=2\left[W\left(\left(q\circ h_{\nu}\right)\left(\frac{x}{\eta_{j}}\right)\right)\right]\left|\nabla h_{\nu}\left(\frac{x}{\eta_{j}}\right)\right|^{2}\frac{1}{\eta_{j}^{2}}\notag\\
&\leqslant \frac{2\Theta}{\eta_{j}^{2}}\left[\left(1-\left(q\circ h_{\nu}\right)^{2}\left(\frac{x}{\eta_{j}}\right)\right)^{2}\right]\notag\\
&\leqslant  \frac{2\Theta}{\eta_{j}^{2}}\left[\left(1-\tanh^{2}\left(\sqrt{2}h_{\nu}\left(\frac{x}{\eta_{j}}\right)\right)\right)^{2}\right]\notag\\
&= \frac{2\Theta}{\eta_{j}^{2}}\left[\sech^4\left(\sqrt{2}h_{\nu}\left(\frac{x}{\eta_{j}}\right)\right)\right].\label{e.graddisplay}
\end{align}
%By the the choice of $\delta_{\eta_{j}^{-1}}$ (see \eqref{e.delt}), \eqref{e.hyptan}, and since $|x\cdot \nu|\leq \tau\leq 2$, we have 
%\begin{equation*}
%\delta_{\eta_{j}^{-1}}\leq Ce^{-c\frac{2}{\eta_{j}}}\leq Ce^{-c\frac{|x\cdot \nu|}{\eta_{j}}}. 
%\end{equation*}
Combining \eqref{e.tanhsech}, \eqref{e.metriccom}, and \eqref{e.graddisplay}, we find
\begin{equation}\label{e.wgrad}
|\nabla w_{j}|\leqslant \frac{C}{\eta_{j}}e^{-c\frac{|x\cdot \nu|}{\eta_{j}}}.
\end{equation}

We consider, for $k\in \mathbb{N}$ with $k>> \frac{1}{\tau-1}$, 
\begin{equation*}
L^{k}:=\left\{x\in \tau Q\setminus Q: \mathrm{dist}(x, \partial(\tau Q))<\frac{1}{k}\right\}. 
\end{equation*}
We divide $L^{k}$ into cubic shells (or layers), which we denote by $\left\{L^{k}_{i,j}\right\}_{i=1}^{M_{k,j}}$,  of thickness $\eta_{j}\la_{j}$, where $\la_{j}$ is defined by \eqref{e.ladef}. We note that $M_{k,j}:=\lceil \frac{1}{k\eta_{j}\la_{j}}\rceil$, and thus 
\begin{equation}\label{e.mkj}
M_{k,j}\eta_{j}\la_{j}\geqslant  1/k.
\end{equation} 

For every $k,j\in \NN$, we let $i_{0}\in \left\{1, \ldots, M_{k,j}\right\}$ be the smallest value such that 
\begin{equation} \label{e.least}
\int_{L_{i_{0},j}^{k}}b_{j}(x)\, dx\leqslant \frac{1}{M_{k,j}}\int_{L^{k}}b_{j}(x)\, dx,
\end{equation}
where 
\begin{equation*}
b_{j}(x):=\frac{1}{\eta_{j}}+\frac{|v_{j}(x)-w_{j}(x)|^{2}}{(\la_{j})^{2}\eta_{j}}+\eta_{j}(|\nabla v_{j}(x)|^{2}+|\nabla w_{j}(x)|^{2}).
\end{equation*}
We also consider cut-off functions $\vp_{k,j}\in C^{\infty}_{c}(\tau Q)$ with 
\begin{equation*}
0\leqslant \vp_{k,j}\leqslant 1, \quad \norm{\nabla \vp_{k,j}}_{L^{\infty}}\leqslant \frac{C}{\eta_{j}\la_{j}},
\end{equation*}
and 
\begin{equation*}
\vp_{k,j}=\begin{cases}
1&\text{for}\, \, x\in Q\bigcup \left(\bigcup_{i=1}^{i_{0}-1}L^{k}_{i,j}\right),\\
0&\text{for}\, \, x\in \left(\bigcup_{i=i_{0}+1}^{M_{k,j}}L^{k}_{i,j}\right). 
\end{cases}
\end{equation*}
We note that $\vp_{k,j}$ transitions precisely in the layer $L^{k}_{i_{0},j}$. 
We then set 
\begin{equation*}
\overline{u}_{k,j}:=\vp_{k,j}v_{j}+(1-\vp_{k,j})w_{j}, 
\end{equation*}
and we have by \eqref{e.ujstep} and \eqref{e.vjstep}, $\lim_{k\rightarrow \infty}\lim_{j\rightarrow\infty}\norm{\overline{u}_{k,j}-u_{0, \nu}}_{L^{2}(\tau Q\setminus Q)}=0$. 

%Let $k_{j}\in \mathbb{N}$ be such that $D^{(k_{j})}\subseteq (\bigcup_{i=i_{0}+1}^{M_{i,j}}L^{i}_{k,j})$. We claim that 
%\begin{equation}
%\liminf_{j\rightarrow\infty} \mathcal{G}_{\eta_{j}}(v_{j}, \tau Q)\geqslant  \limsup_{k\rightarrow\infty}\limsup_{j\rightarrow \infty} \mathcal{G}_{\eta_{j}}(\overline{u}_{k,j}, \tau Q).
%\end{equation}
We estimate 
\begin{align}\label{e.midenergy}
\mathcal{G}_{\eta_{j}}(\overline{u}_{k,j}, \tau Q)&=\mathcal{G}_{\eta_{j}}\left(v_{j}, \left(\bigcup_{i=1}^{i_{0}-1}L^{k}_{i,j}\right)\bigcup Q\right)+\mathcal{G}_{\eta_{j}}\left(\overline{u}_{k,j}, L^{k}_{i_{0},j}\right)+\mathcal{G}_{\eta_{j}}\left(w_{j}, \bigcup_{i=i_{0}+1}^{M_{i,j}}L^{k}_{i,j}\right)\notag\\
%&=\mathcal{G}_{\eta_{j}}\left(v_{j}, \left(\bigcup_{i=1}^{i_{0}-1}L^{i}_{k,j}\right)\cup Q\setminus Q^{(k)}\right)+\mathcal{G}_{\eta_{j}}\left(v_{j}, \tau Q\setminus Q^{(k)}\right)\\
%&\quad +\mathcal{G}_{\eta_{j}}\left(\overline{u}_{k,j}, L^{i_{0}}_{k,j}\right)+\mathcal{G}_{\eta_{j}}\left(w_{j}, \bigcup_{i=i_{0}+1}^{M_{i,j}}L^{i}_{k,j}\right)\\
&=: A_{k,j}+B_{k,j}+C_{k,j}.
\end{align}
We have
\begin{equation*}
A_{k,j}\leqslant \mathcal{G}_{\eta_{j}}\left(v_{j}, Q\right)+\mathcal{G}_{\eta_{j}}\left(v_{j}, \tau Q\setminus Q\right),
\end{equation*}
and we see that, since $v_{j}=\rho_{1/\eta_{j}} * u_{0,\nu}(x)$ in $\tau Q\setminus Q$, 
\begin{align*}
\mathcal{G}_{\eta_{j}}(v_{j}, \tau Q\setminus Q)&=\int_{\tau Q \setminus Q\cap \left\{|x\cdot\nu|<\eta_{j}\right\}} \left[\frac{1}{\eta_{j}}a\left(\frac{x}{\eta_{j}}\right)W(v_{j})+\frac{\eta_{j}}{2}|\nabla v_{j}|^{2}\right]\, dx\\
&\leqslant \frac{C}{\eta_{j}}\left|\tau Q \setminus Q\cap \left\{|x\cdot\nu|<\eta_{j}\right\}\right|,
\end{align*}
where we used the facts that $v_{j}\in \left\{1,-1\right\}$ in $\tau Q\setminus Q\cap \left\{|x\cdot \nu|>\eta_{j}\right\}$, and that 
\begin{equation}\label{e.gradmol}
|\nabla v_{j}|=|\nabla (\rho_{1/\eta_{j}}\ast u_{0, \nu})|\leqslant C\eta_{j}^{-1}.
\end{equation}
Hence, 
\begin{equation*}
\mathcal{G}_{\eta_{j}}(v_{j}, \tau Q\setminus Q)\leqslant C(\tau-1)^{N-1}, 
\end{equation*}
which implies that
\begin{equation}\label{e.alimit}
\limsup_{k\rightarrow \infty}\limsup_{j\rightarrow \infty} A_{k,j}\leqslant \sigma(\nu)+C(\tau-1)^{N-1}.
\end{equation}

Next, since $\norm{v_{j}}_{L^{\infty}(\tau Q)}\leqslant 1$, and $\norm{w_{j}}_{L^{\infty}(\tau Q)}\leqslant C(\Theta)$, we have by \eqref{e.least}, 

\begin{align*}\label{e.b0}
B_{k,j}&\leqslant C\int_{L^{k}_{i_{0},j}}\left[\frac{1}{\eta_{j}}+\eta_{j}\left(|\nabla \vp_{k,j}|^{2}(v_{j}-w_{j})^{2}+|\nabla v_{j}|^{2}+|\nabla w_{j}|^{2}\right)\right]\, dx \notag\\
&\leqslant C\int_{L^{k}_{i_{0},j}}\left[\frac{1}{\eta_{j}}+\frac{|v_{j}-w_{j}|^{2}}{\eta_{j}\la_{j}^{2}}+\eta_{j}\left(|\nabla v_{j}|^{2}+|\nabla w_{j}|^{2}\right)\right]\, dx\notag\\
&\leqslant \frac{C}{M_{k,j}}\int_{L^{k}}\left[\frac{1}{\eta_{j}}+\frac{|v_{j}-w_{j}|^{2}}{\eta_{j}\la_{j}^{2}}+\eta_{j}\left(|\nabla v_{j}|^{2}+|\nabla w_{j}|^{2}\right)\right]\, dx.
\end{align*}
By \eqref{e.gradmol}, \eqref{e.wgrad}, and \eqref{e.mkj}, we obtain 
\begin{equation*}
B_{k,j}\leqslant \frac{C}{M_{k,j}}\left[\frac{1}{\eta_{j}}|L^{k}|+\frac{\la_{j}^{2}}{\eta_{j}\la_{j}^{2}}+\frac{C}{\eta_{j}}|L^{k}|\right]\leqslant \frac{C|L^{k}|}{\eta_{j}M_{k,j}}+\frac{C}{\eta_{j}M_{k,j}}\leqslant C\frac{\tau^{N-1}k\la_{j}}{k}+Ck\la_{j},
\end{equation*}
where we have used the fact that $|L^{k}|\leqslant C(N)\frac{\tau^{N-1}}{k}$. Hence, 
\begin{equation}\label{e.blimit}
\limsup_{k\rightarrow \infty}\limsup_{j\rightarrow \infty} B_{k,j}=0. 
\end{equation}

For the term $C_{k,j}$, we first remark that by \eqref{e.tanhsech}, and \eqref{e.metriccom}, 
\begin{align*}
W(w_{j})\leqslant (1-\tanh^{2}(2h_{\nu}(x/\eta_{j})))^{2}=\sech^{4}(2h_{\nu}(x/\eta_{j})))\leqslant Ce^{-c\frac{|x\cdot \nu|}{\eta_{j}}}.
\end{align*}

Combining this with \eqref{e.wgrad}, we have 
\begin{align*}
\int_{\tau Q\setminus Q} \left[\frac{1}{\eta_{j}}a\left(\frac{x}{\eta_{j}}\right)\right.&\left.W(w_j(x))+\frac{\eta_{j}}{2}|\nabla w_{j}(x)|^{2}\right]\, dx\\
&\leqslant \int_{\tau Q\setminus Q\cap \left\{|x\cdot \nu|\geqslant  2\eta_{j}\right\}} \left[\frac{1}{\eta_{j}}a\left(\frac{x}{\eta_{j}}\right)W(w_j(x))+\frac{\eta_{j}}{2}|\nabla w_{j}(x)|^{2}\right]\, dx\\
&\quad +\int_{\tau Q\setminus Q\cap \left\{|x\cdot \nu|\leqslant 2\eta_{j}\right\}} \left[\frac{1}{\eta_{j}}a\left(\frac{x}{\eta_{j}}\right)W(w_j(x))+\frac{\eta_{j}}{2}|\nabla w_{j}(x)|^{2}\right]\, dx\\
&\leqslant C\int_{\tau Q\setminus Q\cap \left\{|x\cdot \nu|\geqslant  2\eta_{j}\right\}} \left[\frac{1}{\eta_{j}}e^{-c\frac{|x\cdot \nu|}{\eta_{j}}}+\frac{1}{\eta_{j}}e^{-c\frac{|x\cdot\nu|}{\eta_{j}}}\right]\, dx\\
&\quad +C\int_{\tau Q\setminus Q\cap \left\{|x\cdot \nu|\leqslant 2\eta_{j}\right\}}\frac{1}{\eta_{j}}\, dx\\
&\leqslant C(\tau-1)^{N-1}+C\eta_{j}(\tau-1)^{N-1}\frac{1}{\eta_{j}}\\
&\leqslant C(\tau-1)^{N-1}. 
\end{align*}

This implies that 
\begin{equation}\label{e.climit}
\limsup_{k\rightarrow \infty}\limsup_{j\rightarrow \infty}C_{k,j}\leqslant C(\tau-1)^{N-1}. 
\end{equation}

Finally, by \eqref{e.alimit}, \eqref{e.blimit}, and \eqref{e.climit}, and using a diagonal argument, we may find an increasing sequence $\left\{k(j)\right\}$ such that 
\begin{equation*}
\limsup_{j\rightarrow \infty} \left[A_{k(j),j}+B_{k(j), j}+C_{k(j),j}\right]\leqslant \sigma(\nu)+C(\tau-1)^{N-1}.  
\end{equation*}
%with $\lim_{j\rightarrow \infty}\norm{\tilde{w}_{k(j), j}-u_{0, \nu}}_{L^{1}((1+\xi_{k(j)})Q)}=0$. 
By \eqref{e.midenergy}, we let $u_{j}=\overline{u}_{k(j),j}$, and we arrive at \eqref{e.energytau}, with $u_{j}=w_{j}$ on $\partial (\tau Q)$.

To conclude, we now move from $\tau Q$ to $Q$, defining $\tilde{u}_{j}(x):=u_{j}(\tau x)$, for $x\in Q$, and changing variables to obtain
\begin{align*}
\int_{Q}\left[\frac{\tau}{\eta_{j}}a\left(\frac{\tau y}{\eta_{j}}\right)\right.&\left.W(\tilde{u}_{j}(y))+\frac{\eta_{j}}{2\tau}|\nabla \tilde{u}_{j}(y)|^{2}\right]\, dy\\
&=\frac{1}{\tau^{N}}\int_{\tau Q}\left[\frac{\tau}{\eta_{j}}a\left(\frac{x}{\eta_{j}}\right)W\left(\tilde{u}_{j}\left(\frac{x}{\tau}\right)\right)+\frac{\eta_{j}}{2\tau}\left|\nabla \tilde{u}_{j}\left(\frac{x}{\tau}\right)\right|^{2}\right]\, dx\\
&=\frac{1}{\tau^{N}}\int_{\tau Q}\left[\frac{\tau}{\eta_{j}}a\left(\frac{x}{\tau}\right)W\left(u_{j}(x)\right)+\frac{\eta_{j}\tau^{2}}{2\tau}\left|\nabla u_{j}(x)\right|^{2}\right]\, dx\\
&=\frac{\tau}{\tau^{N}}\int_{\tau Q}\left[\frac{1}{\eta_{j}}a\left(\frac{x}{\eta_{j}}\right)W\left(u_{j}(x)\right)+\frac{\eta_{j}}{2}\left|\nabla u_{j}(x)\right|^{2}\right]\, dx.
\end{align*}
By \eqref{e.energytau}, this implies 
\begin{equation*}
\limsup_{j\rightarrow \infty} \int_{Q}\left[\frac{\tau}{\eta_{j}}a\left(\frac{\tau y}{\eta_{j}}\right)W(\tilde{u}_{j}(y))+\frac{\eta_{j}}{2\tau}|\nabla \tilde{u}_{j}(y)|^{2}\right]\, dy\leqslant \frac{1}{\tau^{N-1}}\sigma(\nu)+\frac{C}{\tau^{N-1}}(\tau-1)^{N-1}. 
\end{equation*}
We note that $\ve_{j}:=\frac{\eta_{j}}{\tau}\rightarrow 0$ and for all $x\in \partial Q$, 
\begin{equation*}
\tilde{u}_{j}(x)=u_{j}(\tau x)=w_{j}(\tau x)=(q\circ h_{\nu})\left(\frac{\tau x}{\eta_{j}}\right)=(q\circ h_{\nu})\left(\frac{x}{\ve_{j}}\right).
\end{equation*}

Hence, $\tilde{u}_{j}$ is admissible for \eqref{e.sigbarep} (with sequence $\ve_{j}$), from which we conclude that
\begin{align*}
\overline{\sigma}(\nu)&\leqslant \limsup_{j\rightarrow \infty} \int_{Q}\left[\frac{\tau}{\eta_{j}}a\left(\frac{y}{\eta_{j}}\right)W(\tilde{u}_{j}(y))+\frac{\eta_{j}}{2\tau}|\nabla \tilde{u}_{j}(y)|^{2}\right]\, dy\\
&\leqslant \frac{1}{\tau^{N-1}}\sigma(\nu)+\frac{C}{\tau^{N-1}}(\tau-1)^{N-1}. 
\end{align*}
Letting $\tau\rightarrow 1^{+}$, we arrive at $\overline{\sigma}(\nu)\leqslant \sigma(\nu)$. As priorly mentioned, the opposite inequality follows from a symmetrical argument.

\end{proof}
\section*{Acknowledgements}

We thank William Feldman and Peter Morfe for their interest in our work, and very helpful discussions and correspondence.  

Rustum Choksi was supported by an NSERC Discovery Grant.  Irene Fonseca (I.F.) and Raghav Venkatraman (R.V.) acknowledge the Center for Nonlinear Analysis where part of this work was carried out. The research of I.F. was partially funded by the NSF grants DMS-1411646 and DMS-1906238. Jessica Lin was partially supported by an NSERC Discovery Grant, an FRQNT grant, and the Canada Research Chairs program. The research of R.V. was partially funded by NSF grant DMS-1411646. R.V. also acknowledges support from an AMS-Simons travel award.

\bibliographystyle{abbrv} 
 \bibliography{allencahn}

\begin{thebibliography}{10}

\bibitem{Allaire}
G.~Allaire.
\newblock {\em Shape optimization by the homogenization method}, volume 146 of
  {\em Applied Mathematical Sciences}.
\newblock Springer-Verlag, New York, 2002.

\bibitem{scottpierre}
S.~Armstrong and P.~Cardaliaguet.
\newblock Stochastic homogenization of quasilinear {H}amilton-{J}acobi
  equations and geometric motions.
\newblock {\em J. Eur. Math. Soc. (JEMS)}, 20(4):797--864, 2018.

\bibitem{ASlevel}
S.~N. Armstrong and P.~E. Souganidis.
\newblock Stochastic homogenization of level-set convex {H}amilton-{J}acobi
  equations.
\newblock {\em Int. Math. Res. Not. IMRN}, (15):3420--3449, 2013.

\bibitem{Bangert}
V.~Bangert.
\newblock Geodesic rays, {B}usemann functions and monotone twist maps.
\newblock {\em Calc. Var. Partial Differential Equations}, 2(1):49--63, 1994.

\bibitem{Besicovitch}
A.~S. Besicovitch.
\newblock {\em Almost periodic functions}.
\newblock Dover Publications, Inc., New York, 1955.

\bibitem{BhattaKohn}
K.~Bhattacharya and R.~V. Kohn.
\newblock Elastic energy minimization and the recoverable strains of
  polycrystalline shape-memory materials.
\newblock {\em Arch. Rational Mech. Anal.}, 139(2):99--180, 1997.

\bibitem{burago}
D.~Burago.
\newblock Periodic metrics.
\newblock In {\em Seminar on {D}ynamical {S}ystems ({S}t. {P}etersburg, 1991)},
  volume~12 of {\em Progr. Nonlinear Differential Equations Appl.}, pages
  90--95. Birkh\"{a}user, Basel, 1994.

\bibitem{CC}
L.~A. Caffarelli and A.~C\'{o}rdoba.
\newblock Uniform convergence of a singular perturbation problem.
\newblock {\em Comm. Pure Appl. Math.}, 48(1):1--12, 1995.

\bibitem{CDGay}
J.~Casado-D\'{\i}az and I.~Gayte.
\newblock The two-scale convergence method applied to generalized {B}esicovitch
  spaces.
\newblock {\em R. Soc. Lond. Proc. Ser. A Math. Phys. Eng. Sci.},
  458(2028):2925--2946, 2002.

\bibitem{CGN}
A.~Chambolle, M.~Goldman, and M.~Novaga.
\newblock Plane-like minimizers and differentiability of the stable norm.
\newblock {\em J. Geom. Anal.}, 24(3):1447--1489, 2014.

\bibitem{KohnEd}
A.~Cherkaev and R.~Kohn.
\newblock {\em Topics in the mathematical modelling of composite materials}.
\newblock Springer, 1997.

\bibitem{primer}
M.~G. Crandall.
\newblock Viscosity solutions: a primer.
\newblock In {\em Viscosity solutions and applications ({M}ontecatini {T}erme,
  1995)}, volume 1660 of {\em Lecture Notes in Math.}, pages 1--43. Springer,
  Berlin, 1997.

\bibitem{users}
M.~G. Crandall, H.~Ishii, and P.-L. Lions.
\newblock User's guide to viscosity solutions of second order partial
  differential equations.
\newblock {\em Bull. Amer. Math. Soc. (N.S.)}, 27(1):1--67, 1992.

\bibitem{CFHP}
R.~Cristoferi, I.~Fonseca, A.~Hagerty, and C.~Popovici.
\newblock A homogenization result in the gradient theory of phase transitions.
\newblock {\em Interfaces Free Bound.}, 21(3):367--408, 2019.

\bibitem{CFHP2}
R.~Cristoferi, I.~Fonseca, A.~Hagerty, and C.~Popovici.
\newblock Erratum to: {A} homogenization result in the gradient theory of phase
  transitions.
\newblock {\em Interfaces Free Bound.}, 22(2):245--250, 2020.

\bibitem{evans}
L.~C. Evans.
\newblock Periodic homogenisation of certain fully nonlinear partial
  differential equations.
\newblock {\em Proc. Roy. Soc. Edinburgh Sect. A}, 120(3-4):245--265, 1992.

\bibitem{EG}
L.~C. Evans and R.~F. Gariepy.
\newblock {\em Measure theory and fine properties of functions}.
\newblock Textbooks in Mathematics. CRC Press, Boca Raton, FL, revised edition,
  2015.

\bibitem{feldman}
W.~Feldman.
\newblock Mean curvature flow with positive random forcing in 2-d.
\newblock 1911.00488, 2019.

\bibitem{willpeter}
W.~Feldman and P.~Morfe.
\newblock Personal communication, 2020.

\bibitem{willpeter2}
W.~Feldman and P.~Morfe.
\newblock The occurrence of surface tension discontinuities and zero mobility
  for allen-cahn and mean curvature flows in periodic media.
\newblock in preparation, 2021.

\bibitem{FS}
W.~M. Feldman and P.~E. Souganidis.
\newblock Homogenization and non-homogenization of certain non-convex
  {H}amilton-{J}acobi equations.
\newblock {\em J. Math. Pures Appl. (9)}, 108(5):751--782, 2017.

\bibitem{FT89}
I.~Fonseca and L.~Tartar.
\newblock The gradient theory of phase transitions for systems with two
  potential wells.
\newblock {\em Proc. Roy. Soc. Edinburgh Sect. A}, 111(1-2):89--102, 1989.

\bibitem{Iap}
H.~Ishii.
\newblock Almost periodic homogenization of {H}amilton-{J}acobi equations.
\newblock In {\em International {C}onference on {D}ifferential {E}quations,
  {V}ol. 1, 2 ({B}erlin, 1999)}, pages 600--605. World Sci. Publ., River Edge,
  NJ, 2000.

\bibitem{LPV}
P.-L. Lions, G.~Papanicolaou, and S.~S. Varadhan.
\newblock Homogenization of hamilton-jacobi equations.
\newblock {\em unpublished work}, 1987.

\bibitem{LurieCherkaev}
K.~A. Lurie and A.~V. Cherkaev.
\newblock Exact estimates of conductivity of composites formed by two
  isotropically conducting media taken in prescribed proportion.
\newblock {\em Proc. Roy. Soc. Edinburgh Sect. A}, 99(1-2):71--87, 1984.

\bibitem{maggibook}
F.~Maggi.
\newblock {\em Sets of finite perimeter and geometric variational problems},
  volume 135 of {\em Cambridge Studies in Advanced Mathematics}.
\newblock Cambridge University Press, Cambridge, 2012.
\newblock An introduction to geometric measure theory.

\bibitem{ManMen}
C.~Mantegazza and A.~C. Mennucci.
\newblock Hamilton-{J}acobi equations and distance functions on {R}iemannian
  manifolds.
\newblock {\em Appl. Math. Optim.}, 47(1):1--25, 2003.

\bibitem{Miltonbook}
G.~W. Milton.
\newblock {\em The theory of composites}, volume~6 of {\em Cambridge Monographs
  on Applied and Computational Mathematics}.
\newblock Cambridge University Press, Cambridge, 2002.

\bibitem{KohnMilton}
G.~W. Milton and R.~V. Kohn.
\newblock Variational bounds on the effective moduli of anisotropic composites.
\newblock {\em J. Mech. Phys. Solids}, 36(6):597--629, 1988.

\bibitem{ModMort}
L.~Modica and S.~Mortola.
\newblock Un esempio di {$\Gamma ^{-}$}-convergenza.
\newblock {\em Boll. Un. Mat. Ital. B (5)}, 14(1):285--299, 1977.

\bibitem{MuratTartar}
F.~Murat and L.~Tartar.
\newblock Calcul des variations et homog\'{e}n\'{e}isation.
\newblock In {\em Homogenization methods: theory and applications in physics
  ({B}r\'{e}au-sans-{N}appe, 1983)}, volume~57 of {\em Collect. Dir. \'{E}tudes
  Rech. \'{E}lec. France}, pages 319--369. Eyrolles, Paris, 1985.

\bibitem{rock}
R.~T. Rockafellar.
\newblock {\em Convex analysis}.
\newblock Princeton Mathematical Series, No. 28. Princeton University Press,
  Princeton, N.J., 1970.

\bibitem{Rudin}
W.~Rudin.
\newblock {\em Principles of mathematical analysis}.
\newblock McGraw-Hill Book Co., New York-Auckland-D\"{u}sseldorf, third
  edition, 1976.
\newblock International Series in Pure and Applied Mathematics.

\bibitem{sethian}
J.~A. Sethian.
\newblock Fast marching methods.
\newblock {\em SIAM Rev.}, 41(2):199--235, 1999.

\bibitem{S88}
P.~Sternberg.
\newblock The effect of a singular perturbation on nonconvex variational
  problems.
\newblock {\em Arch. Rational Mech. Anal.}, 101(3):209--260, 1988.

\bibitem{subin}
M.~A. \v{S}ubin.
\newblock Almost periodic functions and partial differential operators.
\newblock {\em Uspehi Mat. Nauk}, 33(2(200)):3--47, 247, 1978.

\end{thebibliography}
\end{document}